\DeclareMathOperator{\Gal}{Gal}
 \newtheorem{thm}{Theorem}
 \newtheorem{cor}
 {Corollary}
 \newtheorem{lem}
 {Lemma}
 \newtheorem{prop}
 {Proposition}
 \newtheorem{defn}
 {Definition}
 \newtheorem{rem}
 {Remark}
\newtheorem{ex}
{Example}
\newtheorem{conj}
{Conjecture}
\numberwithin{equation}{section}
\title{Classification of quantum groups and Belavin--Drinfeld cohomologies}
\author[*]{Boris Kadets} 
\author[*]{Eugene Karolinsky}
\author[**]{Iulia Pop}
\author[**]{Alexander Stolin}
\affil[*]{Department of Mechanics and Mathematics, Kharkov National University}
\affil[**]{ Department of Mathematics, Gothenburg University, Sweden}
\begin{document}
\maketitle

\begin{abstract}
In the present article we discuss the classification of quantum groups whose quasi-classical limit is a given simple complex Lie algebra $\mathfrak{g}$. This problem reduces to the classification of all Lie bialgebra
structures on $\mathfrak{g}(\mathbb{K})$, where $\mathbb{K}=\mathbb{C}((\hbar))$. The associated classical double
is of the form $\mathfrak{g}(\mathbb{K})\otimes_{\mathbb{K}} A$, where $A$ is one of the following: $\mathbb{K}[\varepsilon]$, where $\varepsilon^{2}=0$, $\mathbb{K}\oplus\mathbb{K}$ or $\mathbb{K}[j]$, where $j^{2}=\hbar$. The first case relates to quasi-Frobenius Lie algebras. In the second and third cases we introduce a theory of Belavin--Drinfeld cohomology associated to any non-skewsymmetric $r$-matrix from the Belavin--Drinfeld list \cite{BD}. We prove a one-to-one correspondence between gauge equivalence classes of Lie bialgebra structures on $\mathfrak{g}(\mathbb{K})$ and cohomology classes (in case II) and twisted cohomology classes (in case III) associated to any non-skewsymmetric $r$-matrix.

 \textbf{Mathematics Subject Classification (2010):} 17B37, 17B62.

 \textbf{Keywords:} Quantum groups, Lie bialgebras, classical double, $r$-matrix.
\end{abstract}
\section{Introduction}
The first example of a quantum group appeared in the work \cite{KR} (see \cite{KRe} for English translation), where the quantum group $U_{\hslash}(\mathfrak{sl}_2)$ was constructed. Later, this example was generalized in the works of Jimbo {\cite{Jim}} and Drinfeld \cite{D} (see also the references therein). The aim of this paper is to present an approach to the general problem of classification of quantum groups lying over a simple finite-dimensional complex Lie algebra.  

Let $k$ be a field of characteristic 0. According to \cite{D}, a quantized universal enveloping algebra (or a quantum group) is a topologically free topological Hopf algebra $H$ over the formal power series ring
$k[[\hbar]]$ such that $H/\hbar H$ is isomorphic to the universal enveloping algebra of a Lie algebra $\mathfrak{g}$ over $k$.

The quasi-classical limit of a quantum group is a Lie bialgebra. By definition, a Lie bialgebra is a Lie algebra $\mathfrak{g}$ together with a cobracket
$\delta$ which is compatible with the Lie bracket. Given a quantum group $H$,
with comultiplication $\Delta$, the quasi-classical limit of $H$ is the Lie bialgebra $\mathfrak{g}$ of primitive elements of $H/\hbar H$ and the cobracket is the restriction of the map $(\Delta-\Delta^{21})/\hbar\ (\!\!\!\!\mod\hbar)$ to
$\mathfrak{g}$.

The operation of taking the semiclassical limit is a functor
$SC: QUE\rightarrow LBA$ between categories of quantum groups and Lie bialgebras over $k$. The quantization problem raised by Drinfeld aims at finding a quantization functor, i.\ e., a functor $Q: LBA \rightarrow QUE$ such that $SC\circ Q$ is isomorphic to the identity. Moreover, a quantization functor is required to be universal, in the sense of props.

The existence of universal quantization functors was proved by Etingof and Kazhdan \cite{EK1}, \cite{EK2}. They used Drinfeld's theory of associators to construct quantization functors for any field $k$ of characteristic zero. Drinfeld introduced the notion of associator in relation to the theory of quasi-triangular
quasi-Hopf algebras and showed that associators exist over any field
$k$ of characteristic zero. Etingof and Kazhdan proved that for any fixed associator over $k$ one can construct a universal quantization
functor. More precisely, let $(\mathfrak{g},\delta)$ be a Lie bialgebra over $k$. Then one can associate a Lie bialgebra $\mathfrak{g}_{\hbar}$ over $k[[\hbar]]$
defined as $(\mathfrak{g}\otimes_{k}k[[\hbar]], \hbar\delta)$.
According to Theorem 2.1 of \cite{EK2}
there exists an equivalence $\widehat{Q}$
between the category $LBA_{0}(k[[\hbar]])$
of topologically free over $k[[\hbar]]$ Lie bialgebras
with $\delta=0\ (\!\!\!\!\mod\hbar)$ and the category $HA_{0}(k[[\hbar]])$ of
topologically free Hopf algebras cocommutative modulo $\hbar$.
Moreover, for any $(\mathfrak{g},\delta)$ over $k$, one has the following:
$\widehat{Q}(\mathfrak{g}_{\hbar})=U_{\hbar}(\mathfrak{g})$.

The aim of the present article
is the classification of quantum groups whose quasi-classical limit is a given simple complex Lie algebra $\mathfrak{g}$. Due to the equivalence between $HA_{0}(\mathbb{C}[[\hbar]])$ and $LBA_{0}(\mathbb{C}[[\hbar]])$, this problem is equivalent to classification of Lie bialgebra structures on
$\mathfrak{g}\otimes_{\mathbb{C}}\mathbb{C}[[\hbar]]$. For simplicity, denote $\mathbb{O}:=\mathbb{C}[[\hbar]]$, $\mathbb{K}:=\mathbb{C}((\hbar))$, $\mathfrak{g}(\mathbb{O}):=\mathfrak{g}\otimes_{\mathbb{C}}\mathbb{O}$ and $\mathfrak{g}(\mathbb{K}):= \mathfrak{g}\otimes_{\mathbb{C}} \mathbb{K}$.

On the other hand, in order to classify cobrackets on $\mathfrak{g}(\mathbb{O})$ it is enough to classify cobrackets on $\mathfrak{g}(\mathbb{K})$. Indeed,
if $\delta$ is a Lie bialgebra structure on $\mathfrak{g}(\mathbb{O})$, then it can be naturally extended to $\mathfrak{g}(\mathbb{K})$. Conversely, given
a Lie bialgebra
structure $\bar{\delta}$ on $\mathfrak{g}(\mathbb{K})$, then by multiplying $\bar{\delta}$ by an
appropriate power of $\hbar$, the restriction of $\bar{\delta}$ to
$\mathfrak{g}(\mathbb{O})$ is a Lie bialgebra structure on $\mathfrak{g}(\mathbb{O})$.

From now on let $G$ be a connected algebraic group with a reductive Lie algebra whose semisimple part is
$\mathfrak{g}$. We will consider the adjoint action $\mathrm{Ad}$ of $G$ on $\mathfrak{g}$. We consider the equivalence classes of Lie bialgebra structures on $\mathfrak{g}(\mathbb{K})$ with respect to the following equivalence: two bialgebra structures $\delta_1, \delta_2$ are equivalent, if there exists an element $a \in \mathbb{K}^*$ and $X \in G(\mathbb{K})$ such that $\delta_1=a (\mathrm{Ad}_X \otimes \mathrm{Ad}_X)\delta_2$. We will also use the term ``gauge equivalence'' or ``$G$-equivalence'' if there exists $X \in G(\mathbb{K})$ such that $\delta_1=(\mathrm{Ad}_X \otimes \mathrm{Ad}_X)\delta_2$.

From the general theory of Lie bialgebras it is known that for each Lie bialgebra structure $\delta$ on a fixed Lie algebra $L$ one can construct the corresponding classical double $D(L, \delta)$ which is the vector space $L\oplus L^{*}$ together with a bracket which is induced by the bracket and cobracket of $L$, and a non-degenerate invariant bilinear form, see  \cite{D1}. We consider $L=\mathfrak{g}(\mathbb{K})$ and prove Proposition \ref{prop1} which states that there exists
an associative, unital, commutative
algebra $A$, of dimension 2 over $\mathbb{K}$, such that
$D(\mathfrak{g}(\mathbb{K}), \delta)\cong \mathfrak{g}(\mathbb{K})\otimes_{\mathbb{K}} A$. In Proposition \ref{prop2} we show that there are three possibilities for $A$: $A=\mathbb{K}[\varepsilon]$, where $\varepsilon^{2}=0$, $A=\mathbb{K}\oplus\mathbb{K}$ or $A=\mathbb{K}[j]$, where $j^{2}=\hbar$.

Due to the correspondence Lie bialgebras -- Manin triples, to any Lie bialgebra structure $\delta$ on
$L$ one can associate a certain Lagrangian subalgebra $W$ of $D(L, \delta)$
which is complementary to $L$ and conversely, any such $W$ produces a Lie cobracket on $L$. The main problem is to obtain a classification of all such
subalgebras $W$ for the three choices of $A$ as above. We investigate
separately each choice of $A$.

For $A=\mathbb{K}[\varepsilon]$, where $\varepsilon^{2}=0$, it turns out that the classification problem is related to that of quasi-Frobenius Lie subalgebras over $\mathbb{K}$.

In the case of $A=\mathbb{K}\oplus\mathbb{K}$, we introduce Belavin--Drinfeld cohomologies.
Namely, for any non-skewsymmetric constant $r$-matrix $r_{BD}$ from the Belavin--Drinfeld list \cite{BD}, we associate a cohomology set $H^{1}_{BD}(r_{BD})$.
This cohomology set will depend on a gauge group $G$ acting ``naturally'' on $\mathfrak{g}$. We will see that the choice of $G$ matters.
Therefore, we will use notation $H^{1}_{BD}(G, r_{BD})$.
It is worthful to notice that in all the cases with exception for $GL(n)$, the Lie algebra of $G$ will be $\mathfrak{g}$.

We prove that there exists a one-to-one correspondence between any Belavin--Drinfeld cohomology and gauge equivalence classes of
Lie bialgebra structures on $\mathfrak{g}(\mathbb{K})$.
Then we restrict our discussion to  $\mathfrak{g}=sl(n)$ and we show that all cohomologies $H^{1}_{BD}( GL(n), r_{BD})$ are trivial.

We also discuss the case of orthogonal algebras $\mathfrak{g}=o(n)$, where it turns out that the cohomology associated to the
Drinfeld--Jimbo $r$-matrix is also trivial.
We also illustrate an example where the cohomology corresponding to another non-skewsymmetric constant $r$-matrix
for $o(2n)$ is non-trivial.

We finally move to the classification of Lie bialgebras whose classical
double is isomorphic to $\mathfrak{g}(\mathbb{K}[j])$, with $j^{2}=\hbar$. We restrict ourselves to $\mathfrak{g}=sl(n)$
and we show that in this case a cohomology theory can be introduced too.
Our result states that there exists a one-to-one correspondence between Belavin--Drinfeld twisted cohomology
and gauge equivalence classes of Lie bialgebra structures on $\mathfrak{g}(\mathbb{K})$.
We prove that the twisted cohomology corresponding to the Drinfeld--Jimbo $r$-matrix
and another class of $r$-matrices (called generalized Cremmer--Gervais)
is trivial.

In the last section of the article we compute Belavin--Drinfeld cohomology in certain cases for $\mathfrak{g}=sl(n)$ and $G=SL(n)$. In particular, we show that $H^{1}_{BD}(SL(n), r_{BD})$ is not trivial for certain $r_{BD}$.
Finally, we formulate a conjecture
stating that the Belavin--Drinfeld cohomology associated to the
Drinfeld--Jimbo $r$-matrix is trivial for any simple complex Lie algebra $\mathfrak{g}$.
We also define the quantum Belavin--Drinfeld cohomology and formulate a
second conjecture about the existence of a natural correspondence between classical and quantum cohomologies.

\section{ Lie bialgebra structures on $\mathfrak{g}(\mathbb{K})$ }

Let $\mathfrak{g}$ be a simple complex finite-dimensional Lie algebra.
Consider the Lie algebras $\mathfrak{g}(\mathbb{O})=\mathfrak{g}\otimes_{\mathbb{C}}\mathbb{O}$ and $\mathfrak{g}(\mathbb{K})= \mathfrak{g}\otimes _{\mathbb{C}}\mathbb{K}$.

We have seen that the classification of quantum groups with quasi-classical
limit $\mathfrak{g}$ is equivalent to the classification of all Lie bialgebra structures on $\mathfrak{g}(\mathbb{O})$. Moreover, as explained in the introduction, in order to classify Lie bialgebra structures
on $\mathfrak{g}(\mathbb{O})$, it is enough to classify them on $\mathfrak{g}(\mathbb{K})$.

Let us assume that $\bar{\delta}$ is a Lie bialgebra
structure on $\mathfrak{g}(\mathbb{K})$. This cobracket endows the dual of
$\mathfrak{g}(\mathbb{K})$ with a Lie bracket. Then one can construct the corresponding classical double $D(\mathfrak{g}(\mathbb{K}),\bar{\delta})$.
As a vector
space, $D(\mathfrak{g}(\mathbb{K}),\bar{\delta})=\mathfrak{g}(\mathbb{K})\oplus \mathfrak{g}(\mathbb{K})^{*}$. As a Lie algebra, it is endowed with a bracket which is induced by the bracket and cobracket of $\mathfrak{g}(\mathbb{K})$.
Moreover the canonical symmetric non-degenerate bilinear form on
this space is invariant.

Similarly to Lemma 2.1 from \cite{MSZ}, one can prove that
$D(\mathfrak{g}(\mathbb{K}),\bar{\delta})$ is a direct sum of regular adjoint $\mathfrak{g}$-modules.
Combining this result with Prop. 2.2 from \cite{BZ}, it follows that
\begin{prop}\label{prop1}
There exists an associative, unital, commutative algebra $A$, of dimension 2 over $\mathbb{K}$, such that
$D(\mathfrak{g}(\mathbb{K}),\bar{\delta})\cong \mathfrak{g}(\mathbb{K})\otimes_{\mathbb{K}} A$.
\end{prop}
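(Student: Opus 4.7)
The plan is to combine the two ingredients highlighted just before the statement: a $\mathfrak{g}$-module decomposition in the style of \cite[Lem.~2.1]{MSZ}, and a Lie-algebraic rigidity result in the style of \cite[Prop.~2.2]{BZ}. I would begin by regarding $D := D(\mathfrak{g}(\mathbb{K}),\bar\delta)$ as a $\mathfrak{g}$-module via the adjoint action of the constant subalgebra $\mathfrak{g} \subset \mathfrak{g}(\mathbb{K}) \subset D$. Adapting the MSZ argument---whose main inputs are that $\mathfrak{g}(\mathbb{K})$ itself is already a direct sum of adjoint copies, and that the $\mathfrak{g}$-action preserves the canonical invariant form on $D$---one shows that every $\mathfrak{g}$-isotypic component of $D$ consists of regular adjoint copies. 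Consequently $D \cong \mathfrak{g}(\mathbb{K}) \otimes_{\mathbb{K}} V$ as a $\mathfrak{g}$-module for some $\mathbb{K}$-vector space $V$, and the dimension count $\dim_{\mathbb{K}} D = 2\dim_{\mathbb{K}} \mathfrak{g}(\mathbb{K})$ forces $\dim_{\mathbb{K}} V = 2$.

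Next I would upgrade this module-level identification to a Lie-algebraic one by invoking \cite[Prop.~2.2]{BZ}. That proposition classifies the Lie brackets on $\mathfrak{g}(\mathbb{K}) \otimes_{\mathbb{K}} V$ compatible with the adjoint $\mathfrak{g}$-action and with the ambient subalgebra $\mathfrak{g}(\mathbb{K}) \hookrightarrow D$; the upshot is that every such bracket has the form
\[
 [x \otimes a,\, y \otimes b] \;=\; [x,y] \otimes \mu(a,b)
\]
for a unique commutative, associative, unital $\mathbb{K}$-algebra product $\mu$ on $V$. Heuristically, Schur's lemma for the adjoint representation forces the bracket to factor through a $\mathbb{K}$-bilinear product $\mu$ on the multiplicity space; antisymmetry of the bracket together with antisymmetry of $[\cdot,\cdot]_{\mathfrak{g}}$ forces $\mu$ to be commutative; the Jacobi identity on $D$, combined with Jacobi on $\mathfrak{g}$, forces $\mu$ to be associative; and the embedding $\mathfrak{g}(\mathbb{K}) \hookrightarrow D$ as a Lie subalgebra pins down a distinguished element of $V$ acting as a unit. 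Setting $A := (V,\mu)$ then yields the desired two-dimensional algebra and the isomorphism $D \cong \mathfrak{g}(\mathbb{K}) \otimes_{\mathbb{K}} A$.

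The principal obstacle I anticipate lies entirely in Step~1: namely, certifying that \emph{every} irreducible $\mathfrak{g}$-summand of $D$ is isomorphic to the adjoint representation, with no higher-weight or exotic contributions coming from $\mathfrak{g}(\mathbb{K})^{*}$ viewed via the coadjoint action. This is precisely what the MSZ-style lemma has to deliver, and it hinges in an essential way on the invariance of the canonical form on $D$ together with a Casimir- or weight-support argument; the duality identification of $\mathfrak{g}(\mathbb{K})^{*}$ as a $\mathfrak{g}$-module is what transports regularity from $\mathfrak{g}(\mathbb{K})$ to its dual. Once this regularity is in place, Step~2 is a formal consequence of the structure theorem \cite[Prop.~2.2]{BZ}, and the dimension of $A$ is pinned down automatically.
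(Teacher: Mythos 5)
Your proposal follows exactly the paper's own two-step argument: first showing, as in Lemma 2.1 of \cite{MSZ}, that the double decomposes as a direct sum of regular adjoint $\mathfrak{g}$-modules, and then invoking Prop.~2.2 of \cite{BZ} to recover the commutative, associative, unital algebra structure on the two-dimensional multiplicity space. The extra detail you supply (the Schur-lemma heuristic and the identification of the regularity of the coadjoint part as the key point) is consistent with, and a reasonable fleshing-out of, what the paper leaves to the cited references.
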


\begin{rem}
The symmetric invariant non-degenerate bilinear form $Q$ on $\mathfrak{g}(\mathbb{K})\otimes_{\mathbb{K}} A$ is given in the following way.
For arbitrary elements $f_{1},f_{2}\in\mathfrak{g}(\mathbb{K})$ and $a,b\in A$ we have
$Q(f_{1}\otimes a,f_{2}\otimes b)=K(f_{1},f_{2})\cdot t(ab)$, where
$K$ denotes the Killing form on $\mathfrak{g}(\mathbb{K})$ and $t: A\longrightarrow \mathbb{K}$ is a trace function.
\end{rem}
Let us investigate the algebra $A$. Since $A$ is unital
and of dimension 2 over $\mathbb{K}$, one can choose a basis $\{e,1\}$, where $1$ denotes the unit.
Moreover, there exist $p$ and $q$ in $\mathbb{K}$ such that
$e^2+pe+q=0$. Let $\Delta=p^2-4q \in\mathbb {K}$. We distinguish the following cases:

(i) Assume $\Delta=0$. Let $\displaystyle \varepsilon:=e+\frac{p}{2}$.
Then $\varepsilon^2=0$
and $A=\mathbb{K}\varepsilon\oplus\mathbb{K}=\mathbb{K}[\varepsilon]$.

(ii) Assume $\Delta\neq 0$ and has even order as an element of $\mathbb{K}$.
This implies that $\Delta=\hbar^{2m}(a_0+a_1\hbar+a_2\hbar^{2}+\ldots )$, where $m$ is an integer, $a_i$ are complex coefficients and $a_0\neq 0$.

One can easily check that the equation $x^2=a_0+a_1\hbar+a_2\hbar^{2}+\ldots  $
has two solutions $\pm x=x_0+x_1\hbar+x_2\hbar^{2}+\ldots $ in $\mathbb{O}$.

Then $\displaystyle e=-\frac{p}{2}\pm \frac{\hbar^{m}x}{2}$, which implies that $e\in \mathbb{K}$ and $A=\mathbb{K}\oplus\mathbb{K}$.

(iii) Assume $\Delta\neq 0$ and has odd order as an element of $\mathbb{K}$.
We have $\Delta=\hbar^{2m+1}(a_0+a_1\hbar+a_2\hbar^{2}+\ldots )$, where $m$ is an integer, $a_i$ are complex coefficients and $a_0\neq 0$.

Again the equation $x^2=a_0+a_1\hbar+a_2\hbar^{2}+\ldots  $
has two solutions $\pm x=x_0+x_1\hbar+x_2\hbar^{2}+\ldots $ in $\mathbb{O}$. Since
$a_0\neq 0$, we have $x_0\neq 0$ and thus $x$ is invertible in $\mathbb{O}$.

Let $j=\hbar^{-m}(2e+p)x^{-1}$. Then $e^2+pe+q=0$ is equivalent to
$j^{2}=\hbar$.

On the other hand, $A=\mathbb{K}e\oplus \mathbb{K}$ and $2e=\hbar^{m}xj-p$
imply that $A=\mathbb{K}j\oplus \mathbb{K}$. Therefore, we obtain that
$A=\mathbb{K}[j]$ where $j^{2}=\hbar$.

We can summarize the above facts:
\begin{prop}\label{prop2}
Let $\bar{\delta}$ be an arbitrary Lie bialgebra structure on $\mathfrak{g}(\mathbb{K})$.
Then $D(\mathfrak{g}(\mathbb{K}),\bar{\delta})$ is isomorphic to $\mathfrak{g}(\mathbb{K})\otimes_{\mathbb{K}} A$,
where  $A=\mathbb{K}[\varepsilon]$ and $\varepsilon^{2}=0$, $A=\mathbb{K}\oplus\mathbb{K}$ or $A=\mathbb{K}[j]$ and $j^{2}=\hbar$.
\end{prop}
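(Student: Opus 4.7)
The idea is that Proposition~\ref{prop1} already reduces the task to classifying $2$-dimensional commutative unital associative $\mathbb{K}$-algebras up to isomorphism, and any such algebra is presented by the minimal polynomial of a generator. First I would choose a basis $\{1,e\}$ of $A$ with $e$ a non-scalar element, observe that $e^{2}$ must be a $\mathbb{K}$-linear combination of $1$ and $e$, and so record the forced relation $e^{2}+pe+q=0$ with $p,q\in\mathbb{K}$. The isomorphism class of $A$ is then controlled by the discriminant $\Delta=p^{2}-4q\in\mathbb{K}$, and the whole argument becomes a case analysis on $\Delta$.

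The case $\Delta=0$ is immediate: the substitution $\varepsilon:=e+p/2$ gives $\varepsilon^{2}=0$, hence $A=\mathbb{K}[\varepsilon]$. For $\Delta\neq 0$ I would invoke the particular arithmetic of $\mathbb{K}=\mathbb{C}((\hbar))$: because $\mathbb{C}$ is algebraically closed, every unit $a_{0}+a_{1}\hbar+\cdots$ in $\mathbb{O}$ is a square, as one can fix $x_{0}$ with $x_{0}^{2}=a_{0}$ and then solve for the higher coefficients of $x=x_{0}+x_{1}\hbar+\cdots$ recursively from $x^{2}=a_{0}+a_{1}\hbar+\cdots$. Consequently a nonzero element of $\mathbb{K}$ is a square iff its $\hbar$-adic valuation is even, so $\mathbb{K}^{*}/(\mathbb{K}^{*})^{2}$ has exactly two classes, with representatives $1$ and $\hbar$. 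This is the key arithmetic input that cuts the $\Delta\neq 0$ regime into two subcases.

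Writing $\Delta=\hbar^{n}u$ with $u\in\mathbb{O}^{*}$, the classification then drops out. If $n=2m$ is even, then $\sqrt{\Delta}=\hbar^{m}x\in\mathbb{K}$, so the quadratic formula yields two roots of $e^{2}+pe+q=0$ inside $\mathbb{K}$; these produce orthogonal idempotents and $A\cong\mathbb{K}\oplus\mathbb{K}$. If $n=2m+1$ is odd, I would set $j:=\hbar^{-m}(2e+p)x^{-1}$, where $x\in\mathbb{O}^{*}$ is the square root of $u$; direct substitution in $e^{2}+pe+q=0$ yields $j^{2}=\hbar$, and the relation $2e=\hbar^{m}xj-p$ shows that $\{1,j\}$ is again a $\mathbb{K}$-basis, so $A=\mathbb{K}[j]$ with $j^{2}=\hbar$. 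The only non-routine step in the entire argument is the power series square-root extraction, which is precisely where algebraic closedness of $\mathbb{C}$ (and not merely characteristic zero) is essential; everything else is formal linear algebra and completion of the square.
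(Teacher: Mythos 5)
Your proposal is correct and follows essentially the same route as the paper: pick a basis $\{1,e\}$, record the relation $e^{2}+pe+q=0$, and split into cases according to whether the discriminant $\Delta=p^{2}-4q$ vanishes or has even or odd $\hbar$-adic valuation, using the recursive square-root extraction in $\mathbb{O}=\mathbb{C}[[\hbar]]$ as the arithmetic input. Your explicit remark that $\mathbb{K}^{*}/(\mathbb{K}^{*})^{2}$ has representatives $1$ and $\hbar$, and your mention of orthogonal idempotents in the even case, merely make precise what the paper leaves implicit.
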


On the other hand, it is well-known, see for instance \cite{D},
that there is a one-to-one correspondence
between Lie bialgebra structures on a Lie algebra $L$ and Manin triples
$(D(L),L, W)$. For $L=\mathfrak{g}(\mathbb{K})$, this fact implies the following
\begin{prop}
There exists a one-to-one correspondence between
Lie bialgebra structures on $\mathfrak{g}(\mathbb{K})$ for which
the classical double is $\mathfrak{g}(\mathbb{K})\otimes_{\mathbb{K}} A$ and
Lagrangian subalgebras $W$ of $\mathfrak{g}(\mathbb{K})\otimes_{\mathbb{K}} A$, with respect to the non-degenerate bilinear form $Q$,
and transversal to $\mathfrak{g}(\mathbb{K})$.
\end{prop}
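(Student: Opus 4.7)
The plan is to derive this statement as a direct specialization of the classical correspondence between Lie bialgebra structures on a Lie algebra $L$ and Manin triples $(D,L,W)$, using Proposition \ref{prop1} to pin down the shape of the double to $\mathfrak{g}(\mathbb{K})\otimes_{\mathbb{K}}A$. Since the general correspondence is already stated in the excerpt as ``well-known'', the work consists only in transporting it through the isomorphism produced by Proposition \ref{prop1}.

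First I would recall the general dictionary: given $(L,\delta)$, the double $D(L,\delta)=L\oplus L^{*}$ carries a non-degenerate invariant symmetric form for which $L$ and $L^{*}$ are transversal Lagrangian subalgebras; conversely, given a Manin triple $(D,L,W)$, the bilinear form identifies $W$ with the linear dual $L^{*}$, and dualizing the bracket on $W$ produces a cobracket $\delta$ on $L$. These two constructions are mutually inverse. To apply this in our setting, I would take a Lie bialgebra structure $\bar{\delta}$ on $\mathfrak{g}(\mathbb{K})$ with $D(\mathfrak{g}(\mathbb{K}),\bar{\delta})\cong \mathfrak{g}(\mathbb{K})\otimes_{\mathbb{K}}A$ and transport $\mathfrak{g}(\mathbb{K})^{*}\subset D$ through the isomorphism of Proposition \ref{prop1}, producing a subalgebra $W\subset \mathfrak{g}(\mathbb{K})\otimes_{\mathbb{K}}A$ that is Lagrangian with respect to $Q$ and transversal to $\mathfrak{g}(\mathbb{K})$. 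In the opposite direction, given such a $W$, I would observe that $(\mathfrak{g}(\mathbb{K})\otimes_{\mathbb{K}}A,\mathfrak{g}(\mathbb{K}),W)$ is a Manin triple, and the general correspondence then delivers a cobracket $\bar{\delta}$ whose double is canonically identified with $\mathfrak{g}(\mathbb{K})\otimes_{\mathbb{K}}A$.

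The only point requiring a brief check is that these two assignments really are inverse to each other after the identification of $D$ with $\mathfrak{g}(\mathbb{K})\otimes_{\mathbb{K}}A$ has been fixed. This reduces to the observation that the Lie bracket on $\mathfrak{g}(\mathbb{K})\otimes_{\mathbb{K}}A$ is completely determined by its restrictions to the two Lagrangian summands together with the invariance of $Q$: for $x\in\mathfrak{g}(\mathbb{K})$ and $w,w'\in W$, the identity $Q([x,w],w')=-Q(w,[x,w'])$ forces the mixed bracket, and hence the cobracket reconstructed from $W$ recovers the original $\bar{\delta}$. I do not foresee a genuine obstacle, as the argument is a verbatim translation of the standard Manin-triple formalism into the specific framework provided by Proposition \ref{prop1}; the only bookkeeping item is that different choices of the isomorphism in Proposition \ref{prop1} yield different but equivalent $W$'s, so the bijection is understood once that identification has been fixed.
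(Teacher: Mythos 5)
Your proposal is correct and follows essentially the same route as the paper, which simply invokes the well-known bijection between Lie bialgebra structures on $L$ and Manin triples $(D(L),L,W)$ and specializes it via Proposition \ref{prop1}; you merely spell out the transport of $\mathfrak{g}(\mathbb{K})^{*}$ through the isomorphism and the inverse check in more detail than the paper does.
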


\begin{cor}
(i) There exists a one-to-one correspondence between
Lie bialgebra structures on $\mathfrak{g}(\mathbb{K})$ for which
the classical double is
$\mathfrak{g}(\mathbb{K}[\varepsilon])$,
$\varepsilon^{2}=0$, and
Lagrangian subalgebras $W$ of $\mathfrak{g}(\mathbb{K}[\varepsilon])$,
and transversal to $\mathfrak{g}(\mathbb{K})$.

(ii) There exists a one-to-one correspondence between
Lie bialgebra structures on $\mathfrak{g}(\mathbb{K})$ for which
the classical double is
$\mathfrak{g}(\mathbb{K})\oplus\mathfrak{g}(\mathbb{K})$ and
Lagrangian subalgebras $W$ of $\mathfrak{g}(\mathbb{K})\oplus\mathfrak{g}(\mathbb{K})$,
and transversal to $\mathfrak{g}(\mathbb{K})$, embedded diagonally into
$\mathfrak{g}(\mathbb{K})\oplus\mathfrak{g}(\mathbb{K})$.

(iii) There exists a one-to-one correspondence between
Lie bialgebra structures on $\mathfrak{g}(\mathbb{K})$ for which
the classical double is
$\mathfrak{g}(\mathbb{K}[j])$, where $j^{2}=\hbar$, and
Lagrangian subalgebras $W$ of $\mathfrak{g}(\mathbb{K}[j])$, and transversal to $\mathfrak{g}(\mathbb{K})$.

\end{cor}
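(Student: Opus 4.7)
The plan is to derive each of the three parts as a direct specialization of the preceding proposition, once we identify how $\mathfrak{g}(\mathbb{K})$ sits inside $\mathfrak{g}(\mathbb{K})\otimes_{\mathbb{K}}A$ for each of the three algebras $A$ listed in Prop.~\ref{prop2}. Since the preceding proposition already provides the bijection between Lie bialgebra structures on $\mathfrak{g}(\mathbb{K})$ with classical double $\mathfrak{g}(\mathbb{K})\otimes_{\mathbb{K}}A$ and Lagrangian subalgebras $W$ transversal to $\mathfrak{g}(\mathbb{K})$ with respect to the form $Q$, the only real work is to rewrite the target space and the embedding $\mathfrak{g}(\mathbb{K})\hookrightarrow\mathfrak{g}(\mathbb{K})\otimes_{\mathbb{K}}A$ (i.e.\ $f\mapsto f\otimes 1$) in each case.

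First I would handle (i) and (iii), which are essentially notational: by Prop.~\ref{prop2}, when $A=\mathbb{K}[\varepsilon]$ one has $\mathfrak{g}(\mathbb{K})\otimes_{\mathbb{K}}A=\mathfrak{g}(\mathbb{K}[\varepsilon])$, and similarly when $A=\mathbb{K}[j]$ with $j^{2}=\hbar$ one has $\mathfrak{g}(\mathbb{K})\otimes_{\mathbb{K}}A=\mathfrak{g}(\mathbb{K}[j])$. Under these identifications the embedding of $\mathfrak{g}(\mathbb{K})$ is the obvious one, so the statement of the preceding proposition gives (i) and (iii) without further comment.

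For part (ii) the only step requiring a remark is the identification of the embedding. Here $A=\mathbb{K}\oplus\mathbb{K}$ with unit $1=(1,1)$, and the canonical isomorphism
\[
\mathfrak{g}(\mathbb{K})\otimes_{\mathbb{K}}(\mathbb{K}\oplus\mathbb{K})\;\xrightarrow{\;\sim\;}\;\mathfrak{g}(\mathbb{K})\oplus\mathfrak{g}(\mathbb{K}),\qquad f\otimes(a,b)\mapsto(af,bf),
\]
sends the copy $\mathfrak{g}(\mathbb{K})\otimes 1$ to the diagonal $\{(f,f):f\in\mathfrak{g}(\mathbb{K})\}$. Hence ``transversal to $\mathfrak{g}(\mathbb{K})$'' in the language of Prop.~\ref{prop1} translates exactly into ``transversal to the diagonally embedded $\mathfrak{g}(\mathbb{K})$'' in the language of the corollary. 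The Lagrangian condition is preserved since $Q$ corresponds, up to the trace $t$ on $\mathbb{K}\oplus\mathbb{K}$, to the difference of two copies of the Killing form on the two summands.

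The main obstacle, if one can call it that, is just to make sure that the identification in (ii) is stated correctly: one must not forget that the natural copy of $\mathfrak{g}(\mathbb{K})$ inside the classical double always enters via the unit of $A$, and for $A=\mathbb{K}\oplus\mathbb{K}$ the unit is the diagonal element. Once this is observed, the three parts of the corollary follow immediately from the preceding proposition applied to each of the three possibilities for $A$ provided by Prop.~\ref{prop2}.
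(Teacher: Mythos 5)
Your proposal is correct and follows exactly the route the paper takes: the corollary is stated there as an immediate specialization of the preceding proposition to the three algebras $A$ from Prop.~\ref{prop2}, with no further proof given. Your explicit identification of $\mathfrak{g}(\mathbb{K})\otimes_{\mathbb{K}}(\mathbb{K}\oplus\mathbb{K})$ with $\mathfrak{g}(\mathbb{K})\oplus\mathfrak{g}(\mathbb{K})$ sending the unit to the diagonal (and the observation that $t$ must vanish on the unit, so $Q$ becomes the difference of Killing forms) is precisely the detail the paper leaves implicit in part (ii).
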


\section{Lie bialgebra structures in Case I}

Here we study the Lie bialgebra structures $\delta$ on $\mathfrak{g}(\mathbb{K})$
for which the corresponding Drinfeld double is isomorphic to
$\mathfrak{g}(\mathbb{K}[\varepsilon])$,
$\varepsilon^{2}=0$. Our problem is to find all subalgebras
$W$ of
$\mathfrak{g}(\mathbb{K}[\varepsilon])$ satisfying the
following conditions:

(i) $W\oplus \mathfrak{g}(\mathbb{K})=\mathfrak{g}(\mathbb{K}[\varepsilon])$.

(ii) $W=W^{\perp}$ with respect to the non-degenerate symmetric bilinear form $Q$ on $\mathfrak{g}(\mathbb{K}[\varepsilon])$ given by

$$Q(f_1+\varepsilon f_2, g_1+\varepsilon g_2)=
K(f_1,g_2)+K(f_2,g_1).$$

\begin{prop}
Any subalgebra $W$ of
$\mathfrak{g}(\mathbb{K}[\varepsilon])$ satisfying  conditions (i) and (ii) from above is uniquely defined by a subalgebra $L$ of $\mathfrak{g}(\mathbb{K})$ together
with a non-degenerate 2-cocycle $B$ on $L$.
\end{prop}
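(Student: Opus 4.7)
The plan is to extract from $W$ a subalgebra $L \subseteq \mathfrak{g}(\mathbb{K})$ together with a non-degenerate skew bilinear form $B$ on $L$ satisfying the cocycle identity, and then check that this data completely determines $W$ and reverses the construction.

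First I would set $L := \pi_1(W)$ and $I := \{y \in \mathfrak{g}(\mathbb{K}) : \varepsilon y \in W\}$, where $\pi_1 : \mathfrak{g}(\mathbb{K}[\varepsilon]) \to \mathfrak{g}(\mathbb{K})$ is the reduction modulo $\varepsilon$. Since $\pi_1$ is a Lie algebra homomorphism, $L$ is a subalgebra, and the identity $[\varepsilon y, x + \varepsilon y'] = \varepsilon [y, x]$ shows that $I$ is stable under the adjoint action of $L$. Applying $Q$ to pairs $\varepsilon y \in W$ and $x + \varepsilon y' \in W$ gives $K(y, x) = 0$, hence $I \subseteq L^{\perp}$. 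The opposite inclusion then comes from the dimension count
$\dim_{\mathbb{K}} W \;=\; \dim_{\mathbb{K}} \mathfrak{g}(\mathbb{K}) \;=\; \dim L + \dim L^{\perp} \;\geq\; \dim L + \dim I \;=\; \dim_{\mathbb{K}} W,$
which forces $I = L^{\perp}$.

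Next, for each $x \in L$ I would choose a lift $\sigma(x) = x + \varepsilon s(x) \in W$; the map $s$ is well-defined as a linear map $L \to \mathfrak{g}(\mathbb{K})/L^{\perp}$. Set $B(x, x') := K(s(x), x')$, which is unambiguous because $L^{\perp}$ annihilates $L$ under $K$. Skew-symmetry of $B$ is exactly the Lagrangian identity $Q(\sigma(x), \sigma(x')) = 0$. Non-degeneracy follows from transversality: if $B(x, \cdot) \equiv 0$ on $L$ then $s(x) \in L^{\perp}$, so $\varepsilon s(x) \in W$ and hence $x = \sigma(x) - \varepsilon s(x)$ lies in $W \cap \mathfrak{g}(\mathbb{K}) = 0$. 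The 2-cocycle identity is the most delicate point: $W$ being closed under the bracket forces $[x, s(x')] + [s(x), x'] - s([x, x']) \in L^{\perp}$, and pairing this with an arbitrary $x'' \in L$ and expanding via invariance $K([a, b], c) = K(a, [b, c])$ rearranges into the required cyclic sum $B([x, x'], x'') + B([x', x''], x) + B([x'', x], x') = 0$.

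For the converse, given $(L, B)$ the non-degenerate pairing $L \times (\mathfrak{g}(\mathbb{K})/L^{\perp}) \to \mathbb{K}$ induced by $K$ yields a linear map $s : L \to \mathfrak{g}(\mathbb{K})$ with $K(s(x), x') = B(x, x')$, and I would set $W := \{x + \varepsilon(s(x) + u) : x \in L, \; u \in L^{\perp}\}$. The subalgebra, Lagrangian, and transversality properties follow by running the arguments of the previous paragraph in reverse; independence of $W$ from the choice of lift $s$ (modulo $L^{\perp}$) together with the recovery $L = \pi_1(W)$ gives the uniqueness claim. I expect the cocycle verification to be the main obstacle, but once the clean identification $I = L^{\perp}$ is in place it reduces to a routine bookkeeping exercise using invariance of the Killing form.
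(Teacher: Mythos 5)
Your argument is correct and is essentially the proof the paper omits: the authors simply cite Th.~3.2 and Cor.~3.3 of \cite{S2}, and the construction there is exactly your one --- project $W$ to $L=\pi_1(W)$, identify $W\cap\varepsilon\mathfrak{g}(\mathbb{K})$ with $\varepsilon L^{\perp}$ by the isotropy-plus-dimension count, and read off the non-degenerate $2$-cocycle $B(x,x')=K(s(x),x')$ from a section $s$, with the converse construction $W=\{x+\varepsilon(s(x)+u)\}$. No gaps; the skew-symmetry, non-degeneracy, and cocycle verifications you sketch all go through as stated.
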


\begin{proof}
The proof is similar to that of Th. 3.2 and Cor. 3.3 from \cite{S2}.
\end{proof}

\begin{rem} We recall that a Lie algebra is called quasi-Frobenius
if there exists a non-degenerate 2-cocycle on it. It is called Frobenius if the corresponding 2-cocycle is a coboundary.
Thus we see that the classification
problem for the Lagrangian subalgebras we are interested in contains the classification of Frobenius subalgebras of $\mathfrak{g}(\mathbb{K})$.
This question is quite complicated, as it is known from studying Frobenius subalgebras of $\mathfrak{g}$.
However, for $\mathfrak{g}=sl(2)$ there is only one Frobenius subalgebra, the standard parabolic one.

\end{rem}

\section{Lie bialgebra structures in Case II and Belavin-Drinfeld cohomologies}\label{case2}

Our task is to classify Lie bialgebra structures on $\mathfrak{g}(\mathbb{K})$
for which the associated classical double is isomorphic to $\mathfrak{g}(\mathbb{K})\oplus \mathfrak{g}(\mathbb{K})$.

\begin{lem}
Any Lie bialgebra structure $\delta$ on $\mathfrak{g}(\mathbb{K})$
for which the associated classical double is isomorphic to $\mathfrak{g}(\mathbb{K})\oplus \mathfrak{g}(\mathbb{K})$ is a coboundary $\delta=dr$ given by an $r$-matrix satisfying $r+r^{21}=f\Omega$, where $f\in\mathbb{K}$ and $\mathrm{CYB}(r)=0$.
\end{lem}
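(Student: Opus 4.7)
The strategy is to use the Manin triple correspondence to reduce the statement to a pair of equations for a single linear endomorphism of $\mathfrak{g}(\mathbb{K})$, and then translate these into the desired $r$-matrix identities. By the preceding corollary, $\delta$ is the same data as a Lagrangian Lie subalgebra $W$ of $D = \mathfrak{g}(\mathbb{K})\oplus\mathfrak{g}(\mathbb{K})$ complementary to the diagonal $\Delta(\mathfrak{g}(\mathbb{K}))$. The requirement that $\Delta$ be isotropic forces the invariant form on $D$ to have, up to an overall scalar $\lambda \in \mathbb{K}^{\times}$ coming from the trace on $A$, the shape
\[
Q\bigl((a_1,a_2),(b_1,b_2)\bigr) = \lambda\bigl(K(a_1,b_1) - K(a_2,b_2)\bigr).
\]

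Next, I parametrize $W$. Because $W\cap\Delta = 0$ and both summands have $\mathbb{K}$-dimension $\dim_{\mathbb{K}}\mathfrak{g}(\mathbb{K})$, the linear map $\alpha\colon W \to \mathfrak{g}(\mathbb{K})$, $\alpha(a_1,a_2) = a_1 - a_2$, is an isomorphism; hence there is a unique $\phi\in\mathrm{End}_{\mathbb{K}}\mathfrak{g}(\mathbb{K})$ with $W = \{(\phi(c),\phi(c)-c) : c \in \mathfrak{g}(\mathbb{K})\}$. Expanding the Lagrangian condition $Q|_{W\times W} = 0$ collapses to $K(\phi(c),d) + K(c,\phi(d)) = K(c,d)$, i.e.\ $\phi + \phi^{\ast} = \mathrm{id}$ with respect to $K$. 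Expanding $[W,W]\subset W$ collapses to
\[
\phi\bigl([\phi(c),d] + [c,\phi(d)] - [c,d]\bigr) = [\phi(c),\phi(d)].
\]

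I then convert to tensor form. Let $r \in \mathfrak{g}(\mathbb{K})\otimes\mathfrak{g}(\mathbb{K})$ be the tensor corresponding to $\phi$ under the isomorphism $\mathrm{End}_{\mathbb{K}}\mathfrak{g}(\mathbb{K}) \cong \mathfrak{g}(\mathbb{K})\otimes\mathfrak{g}(\mathbb{K})$ induced by $K$. Under this identification the first displayed equation becomes $r + r^{21} = \Omega$ (with $\Omega$ the Casimir dual to $K$) and, setting $R = \phi - \tfrac{1}{2}\mathrm{id}$, the second equation is seen to be the operator form of the classical Yang--Baxter equation $\mathrm{CYB}(r) = 0$. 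That $\delta = dr$ follows from the standard Manin-triple recipe: the pairing $Q(\Delta(x),w) = \lambda K(x,\alpha(w))$ identifies $W$ with a scalar multiple of $\mathfrak{g}(\mathbb{K})^{\ast}$, and under this identification the bracket that $W$ inherits from $D$ is dual to the cobracket $dr$. Absorbing the scalar $\lambda$ and exploiting the freedom $r \mapsto r + \mu\Omega$ (which preserves $dr$ while shifting $r+r^{21}$ by $2\mu\Omega$) yields the stated form $r + r^{21} = f\Omega$ for a general $f \in \mathbb{K}$.

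The main obstacle is the last tensor-translation step. While the condition $\phi+\phi^{\ast} = \mathrm{id}$ transparently gives $r+r^{21} = \Omega$, converting the displayed bracket identity into $\mathrm{CYB}(r) = 0$ requires carefully separating the symmetric part $\tfrac{1}{2}\Omega$ of $r$ from the skew part $R$: the direct computation yields a modified Yang--Baxter equation for $R$ with constant $-\tfrac{1}{4}$, and one must verify that this constant is exactly what is needed to make $\mathrm{CYB}(r)$ vanish on the nose rather than merely be $\mathfrak{g}$-invariant.
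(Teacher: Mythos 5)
The paper gives no proof of this lemma at all --- it is stated bare and immediately followed by ``We may suppose that $f=1$'', i.e.\ the authors treat it as a standard consequence of Corollary 2.4(ii) and the classical theory of factorizable Lie bialgebras. Your argument supplies exactly the proof they are implicitly invoking, and it is correct. The parametrization of a complement $W$ to the diagonal by $W=\{(\phi(c),\phi(c)-c)\}$ via the isomorphism $\alpha(a_1,a_2)=a_1-a_2$ is right (injectivity is $W\cap\Delta=0$, surjectivity is the dimension count over $\mathbb{K}$); the isotropy and subalgebra conditions do collapse to $\phi+\phi^{*}=\mathrm{id}$ and $\phi([\phi(c),d]+[c,\phi(d)]-[c,d])=[\phi(c),\phi(d)]$; and your determination of the form as $\lambda(K\oplus(-K))$ is forced because $\mathfrak{g}$ simple makes every invariant form on $\mathfrak{g}(\mathbb{K})\oplus\mathfrak{g}(\mathbb{K})$ a pair of multiples of $K$ with no cross terms, and isotropy of the diagonal kills the sum of the two multiples. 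The one step you flag as delicate is indeed the only one needing care, and it does close: substituting $\phi=R+\tfrac12\mathrm{id}$ into the subalgebra identity gives the \emph{exact} modified Yang--Baxter equation $R([R(c),d]+[c,R(d)])-[R(c),R(d)]=\tfrac14[c,d]$, not merely this identity up to invariants, and since the cross terms $[\Omega^{12},\rho^{13}+\rho^{23}]$ etc.\ vanish by invariance of $\Omega$ while $\mathrm{CYB}(\tfrac12\Omega)=\tfrac14[\Omega^{12},\Omega^{13}]$, the constant $\tfrac14$ cancels this invariant element on the nose and yields $\mathrm{CYB}(r)=0$ rather than only $\mathfrak{g}$-invariance of $\mathrm{CYB}(r)$. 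Nothing in the argument uses algebraic closedness of $\mathbb{K}$, so it is valid in the setting of the lemma; if you wanted to make the write-up self-contained you would only need to add the two-line verification of the dual-basis statement that the bracket on $W\cong\mathfrak{g}(\mathbb{K})^{*}$ is dual to $dr$.
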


Without loss of generality we may suppose that $f=1$. According to \cite{BD}, Lie bialgebra structures on a simple Lie algebra over
an algebraically closed field are coboundaries given by non-skewsymmetric $r$-matrices.
These $r$-matrices have been classified up to $\mathrm{Ad}(G)$-equivalence and they
are given in terms of admissible triples. (Recall that $G$ stands for a connected algebraic group with a reductive Lie algebra whose semisimple part is $\mathfrak{g}$.)

Let us fix a Cartan subalgebra $\mathfrak{h}$ of $\mathfrak{g}$ and the associated
root system. Fix a set of simple roots$\Gamma$. We choose a system of generators $e_{\alpha}$, $e_{-\alpha}$,
$h_{\alpha}$ such that $K(e_{\alpha},e_{-\alpha})=1$, for any positive root $\alpha$.
Denote by $\Omega_{0}$ the Cartan part of $\Omega$. Suppose also that $H \subset G$ is a maximal torus with Lie algebra $\mathfrak{h}$.

Let us recall from \cite {BD}, \cite{D} that any non-skewsymmetric $r$-matrix depends on certain discrete and continuous parameters.
The discrete one is an admissible triple
$(\Gamma_{1},\Gamma_{2},\tau)$, i.\ e.,
an isometry $\tau:\Gamma_{1}\longrightarrow \Gamma_{2}$ where $\Gamma_{1},\Gamma_{2}\subset\Gamma$ are such that
for any $\alpha\in\Gamma_{1}$ there exists $k\in \mathbb{N}$ satisfying
$\tau^{k}(\alpha)\notin \Gamma_{1}$.
The continuous parameter is a tensor $r_{0}\in \mathfrak{h}\otimes \mathfrak{h}$ satisfying $r_{0}+r_{0}^{21}=\Omega_{0}$
and $(\tau(\alpha)\otimes 1+1 \otimes \alpha)(r_{0})=0$ for any $\alpha\in \Gamma_{1}$. Then the associated $r$-matrix is given by the following formula
\[r_{BD}=r_{0}+\sum_{\alpha>0}e_{\alpha}\otimes e_{-\alpha}-\sum_{\alpha\in (\mathrm{Span} \Gamma_{1})^{+} }\sum_{k\in \mathbb{N}} e_{-\alpha}\wedge e_{\tau^{k}(\alpha)}.\]

Now, let us consider an $r$-matrix corresponding to a Lie bialgebra on $\mathfrak{g}(\mathbb{K})$.
Up to  $\mathrm{Ad}(G(\overline{\mathbb{K}}))$-equivalence, we have the Belavin--Drinfeld classification.
We may assume that our $r$-matrix is of the form $r_{X}=(\mathrm{Ad}_{X}\otimes \mathrm{Ad}_{X})(r_{BD})$, where $X\in G(\overline{\mathbb{K}})$  and $r_{BD}$ satisfies the system  $r+r^{21}=\Omega$ and $\mathrm{CYB}(r)=0$. The corresponding bialgebra structure is $\delta(a)=[r_{X},a\otimes1+1\otimes a]$ for any $a\in \mathfrak{g}(\mathbb{K})$.

Let us take an arbitrary $\sigma\in\Gal(\overline{\mathbb{K}}/\mathbb{K})$. Then we have $(\sigma\otimes \sigma)(\delta(a))=[\sigma(r_{X}),a\otimes1+1\otimes a]$ and $(\sigma\otimes \sigma)(\delta(a))=\delta(a)$, which imply that $\sigma(r_{X})=r_{X}+\lambda\Omega$, for some $\lambda\in\overline{\mathbb{K}}$. Let us show that $\lambda=0$. Indeed, $\Omega=\sigma(\Omega)=\sigma(r_{X})+\sigma(r_{X}^{21})= r_{X}+r_{X}^{21}+2\lambda\Omega$. Thus $\lambda=0$ and $\sigma(r_{X})=r_{X}$.
Consequently, we get
$(\mathrm{Ad}_{X^{-1}\sigma(X)}\otimes \mathrm{Ad}_{X^{-1}\sigma(X)})(\sigma(r_{BD}))=r_{BD}$.

\begin{defn}
Let $r$ be an $r$-matrix. The \emph{centralizer}
$C(G, r)$
of $r$
is the set of all
$X\in G(\overline{\mathbb{K}})$
satisfying $(\mathrm{Ad}_{X}\otimes \mathrm{Ad}_{X})(r)=r$.
\end{defn}

\begin{thm}\label{CartanK}
For any simple Lie algebra $\mathfrak{g}$ and for any Belavin-Drinfeld matrix $r_{BD}$ we have
\[C(G,r_{BD}) \subset H,\]
where $H$ is a maximal torus of $G$.
\end{thm}

\begin{proof}

1) Let us consider the map
$\Phi:\mathfrak{g} \otimes \mathfrak{g} \to \mathfrak{g} \otimes \mathfrak{g}^*=\mathrm{End}(\mathfrak{g})$
induced by the natural pairing between $\mathfrak{g}$ and $\mathfrak{g}^*$ given by the Killing form, i.\ e.\
$$\Phi(a \otimes b)(u)=K(a,u)b.$$
Let $X \in C(G,r_{BD})$.
We have $$(\mathrm{Ad}_Xa \otimes \mathrm{Ad}_Xb)(u)=
K(\mathrm{Ad}_Xa,u)\mathrm{Ad}_Xb=\mathrm{Ad}_X(K(a, \mathrm{Ad}_Xu)b).$$
Thus, $X \in C(G,r_{BD})$ iff $\mathrm{Ad}_X\Phi(r)=\Phi(r)\mathrm{Ad}_X$.

2) $\mathrm{Ad}_X$ commutes with $\Phi(r)$ implies that it commutes with
semisimple and nilpotent parts of $\Phi(r)$. Our next aim is to compute them.
The operator $\Phi(e_\alpha \otimes e_\beta)$ maps $e_{-\alpha}$ to $e_\beta$ and the rest of the
Chevalley basis to zero. Hence, when $\alpha+\beta \not=0$ the operator
$\Phi(e_\alpha \otimes e_\beta)$ is nilpotent. Thus the operator $A=\Phi(\sum e_{\tau^k(\alpha)} \wedge e_{-\alpha})$ is nilpotent.

 For any positive root $\alpha$,
we have $\Phi(r_{DJ})e_\alpha=0$, $\Phi(r_{DJ})e_{-\alpha}=e_{-\alpha}$ and $\Phi(r_{DJ})h_{\pm \alpha}=\frac{1}{2}h_{\pm \alpha}$.
So when $\alpha$ and $\beta$ have opposite signs, $\Phi(r_{DJ})$ commutes with $\Phi(e_\alpha \otimes e_\beta)$.
Therefore, $\Phi(r_{DJ})$ commutes with $A$.
Clearly, $A(\mathfrak{h})=0$.
Hence,  both $A$ and $\Phi(r_{DJ})$ commute with $\Phi(s)$, where
$s=r-r_{DJ}-\sum e_{\tau^k(\alpha)} \wedge e_{-\alpha}\in \mathfrak{h}^{\otimes 2}$.

So we have the decomposition of $\Phi(r_{BD})$ in to the sum of
three commuting operators: $\Phi(r_{BD})=\Phi(r_{DJ})+\Phi(s)+A$. If $\Phi(s)=\Phi(s)_d+\Phi(s)_n$
is the Jordan decomposition of $\Phi(s)$ then $D=\Phi(r_{DJ})+\Phi(s)_d$ is semisimple, $N=A+\Phi(s)_n$
is nilpotent, and $D$ and $N$ commute. Thus, we have obtained the Jordan decomposition $\Phi(r_{BD})=D+N$.
Note that we have $De_{\alpha}=0$, $De_{-\alpha}=e_{-\alpha}$ and $Dh_\alpha \in \mathfrak{h}$.
It remains to show that the centralizer of $D$ lies in $H$.

3) The zero eigenspace $V_0$ of the operator $D$ contains
all positive root vectors and no negative root vectors. $\mathrm{Ad}_X$
commutes with $D$ and hence, must preserve $V_0$. But it also must preserve
its normalizer,  which is the Borel subalgebra $\mathfrak{b}^+$. Similarly, considering $V_1$
instead of $V_0$, we obtain that $\mathrm{Ad}_X$ preserves $\mathfrak{b}^-$.
Therefore, $\mathrm{Ad}_X$ preserves $\mathfrak{h}$. So, $X \in N_{G}(\mathfrak{h})$, the normalizer
of the Cartan subalgebra. Consequently, $\mathrm{Ad}_X $ induces an element of the Weyl group $W$.
It is well-known that $W$ acts transitively and without fixed points
on the set of the Borel subalgebras containing $\mathfrak{h}$.
But $\mathrm{Ad}_X$
preserves $\mathfrak{b}^+$. Therefore, $\mathrm{Ad}_X$ induces the unit of $W$ and thus, $X \in H$.
\end{proof}

For any root $\alpha$ we denote by $e^{\alpha}$ the corresponding character of the torus $H$.

\begin{thm}
If $(\Gamma_1, \Gamma_2, \tau)$ is an admissible triple corresponding to the
Belavin-Drinfeld matrix $r_{BD}$ then $X \in C(G,r_{BD})$ iff for any root
$\alpha \in \Gamma_1 \setminus \Gamma_2$ and for any $k \in \mathbb{N}$
we have $e^{\alpha}(X)=e^{\tau^k(\alpha)}(X)$ i.\ e., $e^{\alpha}(X)$ is constant on the strings of $\tau$.
\end{thm}

\begin{proof}
Vectors $e_\alpha \otimes e_{-\alpha}, h_\alpha \otimes h_\beta $ and
$e_\gamma \wedge e_\delta$ for $\gamma+\delta \neq 0$ form a set of
linearly independent eigenvectors of $\mathrm{Ad}_X$.
Hence, $X \in C(G,r_{BD})$ if and only if $\mathrm{Ad}_X$ preserves $e_{-\gamma} \wedge e_{\tau^k(\gamma)}$
for $\gamma \in \Gamma_1$. But this is equivalent to
$e^{\alpha}(X)=e^{\tau^k(\alpha)}(X)$ for any root $\alpha \in \Gamma_1 \setminus \Gamma_2$ and for any $k \in \mathbb{N}$.
\end{proof}

\begin{thm}\label{CartanK-2}
Let $r_{BD}$ be an $r$-matrix from the Belavin--Drinfeld list for
$\mathfrak{g}(\overline{\mathbb{K}})$. Suppose that \[(\mathrm{Ad}_{X^{-1}\sigma(X)}\otimes \mathrm{Ad}_{X^{-1}\sigma(X)})(\sigma(r_{BD}))=r_{BD}.\]
Then $\sigma(r_{BD})=r_{BD}$ and $X^{-1}\sigma(X)\in C(G,r_{BD})$.

\end{thm}

\begin{proof}
Consider $r=r_{BD}$ which corresponds to an admissible triple $(\Gamma_{1},\Gamma_{2},\tau)$ and $r_{0}\in \mathfrak{h}\otimes \mathfrak{h}$. Denote $Y:=X^{-1}\sigma(X)$ and $s:=r-r_{0}$. Then $(\mathrm{Ad}_Y\otimes \mathrm{Ad}_Y)(s+\sigma(r_{0}))=s+r_{0}$.

Following \cite{ES} p. 43--47, let $\Phi (r):\mathfrak{g}\longrightarrow \mathfrak{g}$ be defined as in Theorem \ref{CartanK}.
Let \[\mathfrak{g}_{r}^{\lambda}=
\bigcup_{n>0} \mathrm{Ker}(\Phi (r)-\lambda)^{n}.\] Then \[\mathfrak{g}=\mathfrak{g}_{r}^{0}\oplus \mathfrak{g}_{r}'\oplus \mathfrak{g}_{r}^{1}, \hspace{0.2cm}
\mathfrak{g}_{r}'=\bigoplus_{\lambda\neq 0,1} \mathfrak{g}_{r}^{\lambda}.\]

In our case,
$\mathfrak{n}_{-}\subseteq \mathfrak{g}_{s+r_{0}}^{0}\subseteq \mathfrak{b}_{-}$, $\mathfrak{n}_{+}\subseteq \mathfrak{g}_{s+r_{0}}^{1}\subseteq \mathfrak{b}_{+}$,
$\mathfrak{g}_{s+r_{0}}'\subseteq \mathfrak{h}$, $\mathfrak{g}_{s+r_{0}}^{0}+\mathfrak{g}_{s+r_{0}}'=\mathfrak{b}_{-}$ and $\mathfrak{g}_{s+r_{0}}^{1}+\mathfrak{g}_{s+r_{0}}'=\mathfrak{b}_{+}$. Similarly for $s+\sigma(r_{0})$.

On the other hand, it can be easily checked that
\[\Phi (\mathrm{Ad}_Y\otimes \mathrm{Ad}_Y)(r)=\mathrm{Ad}_Y\circ \Phi (r)\circ \mathrm{Ad}_Y^{-1}.\]
Hence, $\mathrm{Ad}_Y(\mathfrak{g}_{s+\sigma(r_{0})}^i)=
\mathfrak{g}_{s+r_{0}}^i$, $i=0,1$ and $\mathrm{Ad}_Y(\mathfrak{g}'_{s+\sigma(r_{0})})=\mathfrak{g}'_{s+r_{0}}$. Therefore,
$\mathrm{Ad}_Y(\mathfrak{b}_{\pm})=\mathfrak{b}_{\pm}$ and $\mathrm{Ad}_Y\in H (\overline{\mathbb{K}})$
since $G$ is connected. 

Let us analyse the equality $(\mathrm{Ad}_Y\otimes \mathrm{Ad}_Y)(s+\sigma(r_{0}))=s+r_{0}$. It follows that $(\mathrm{Ad}_Y\otimes \mathrm{Ad}_Y)(s)+\sigma(r_{0})=s+r_{0}$.
Taking into account that $r_{0},\sigma(r_{0})\in {\mathfrak h}^{\otimes 2}$
and \[(\mathrm{Ad}_Y\otimes \mathrm{Ad}_Y)(s)=\sum_{\alpha>0}e_{\alpha}\otimes e_{-\alpha}+\sum_{\beta\in (\mathbb{Z}\Gamma_{1})^{+}}\sum_{n>0} k_{\beta,n}e_{\beta}\wedge e_{-\tau^{n}(\beta)},\] for some integers $k_{\beta,n}$, we deduce that $\sigma(r_{0})=r_{0}$. Thus, $\sigma(r)=r$
and $\mathrm{Ad}_Y\in C(G,r)$.

\end{proof}

Henceforth we will assume that $r_{BD}$ is defined over $\mathbb{K}$, i.e. $r_0 \in \mathfrak{g}(\mathbb{K}) \otimes \mathfrak{g}(\mathbb{K})$.\\

In conclusion, $r_{X}=(\mathrm{Ad}_{X}\otimes \mathrm{Ad}_{X})(r_{BD})$
induces a Lie bialgebra structure on $\mathfrak{g}(\mathbb{K})$
if and only if $X\in G(\overline{\mathbb{K}})$
satisfies the condition $X^{-1}\sigma(X)\in C(G, r_{BD})$,
for any $\sigma\in \Gal(\overline{\mathbb{K}}/\mathbb{K})$.

\begin{defn}
Let $r_{BD}$ be a non-skewsymmetric $r$-matrix from the Belavin--Drinfeld list
and $C(G, r_{BD})$ its centralizer. We say that $X\in G(\overline{\mathbb{K}})$ is a \emph{Belavin--Drinfeld cocycle}
associated to $r_{BD}$ if $X^{-1}\sigma(X)\in C(G, r_{BD})$, for any $\sigma\in \Gal(\overline{\mathbb{K}}/\mathbb{K})$.

\end{defn}

We denote the set of Belavin--Drinfeld cocycles associated to $r_{BD}$ by
$Z(G, r_{BD})$. This set is non-empty, always contains the identity.

\begin{defn}
 Two cocycles $X_1$ and $X_{2}$ in $Z(G, r_{BD})$ are called \emph{equivalent} ($X_1\sim X_2$) if
there exists $Q\in G(\mathbb{K})$ and $C\in C(G, r_{BD})$ such that $X_{1}=QX_{2}C$.
\end{defn}

\begin{defn}
Let $H_{BD}^{1}(G, r_{BD})$ denote the set of equivalence classes of cocycles from $Z(G, r_{BD})$.
We call this set the \emph{Belavin--Drinfeld cohomology} associated to the $r$-matrix $r_{BD}$.
The Belavin--Drinfeld cohomology is said to be \emph{trivial} if all cocycles are equivalent to the identity, and
\emph{non-trivial} otherwise.
\end{defn}

We make the following remarks:
\begin{rem}
Assume that $X\in Z(G, r_{BD})$. Then for any $\sigma\in \Gal(\overline{\mathbb{K}}/\mathbb{K})$, $\sigma(X)=XC$,
for some $C\in C(G, r_{BD})$. We get $(\mathrm{Ad}_{\sigma(X)}\otimes \mathrm{Ad}_{\sigma(X)})(r_{BD})=(\mathrm{Ad}_{X}\otimes \mathrm{Ad}_{X})(r_{BD})$.

Consequently,
$(\mathrm{Ad}_{X}\otimes \mathrm{Ad}_{X})(r_{BD})$ induces a Lie bialgebra structure on $\mathfrak{g}(\mathbb{K})$.
\end{rem}

\begin{rem}
Assume that $X_1$ and $X_{2}$ in $Z(G, r_{BD})$ are equivalent. Then  $X_{1}=QX_{2}C$, for some  $Q\in G(\mathbb{K})$ and $C\in C(G, r_{BD})$.
This implies that
$(\mathrm{Ad}_{X_{1}}\otimes \mathrm{Ad}_{X_{1}})(r_{BD})=(\mathrm{Ad}_{QX_{2}}\otimes
\mathrm{Ad}_{QX_{2}})(r_{BD})$. In other words the $r$-matrices
$(\mathrm{Ad}_{X_{1}}\otimes \mathrm{Ad}_{X_{1}})(r_{BD})$
and $(\mathrm{Ad}_{X_{2}}\otimes \mathrm{Ad}_{X_{2}})(r_{BD})$ are gauge equivalent over
$\mathbb{K}$ via an element $Q\in G(\mathbb{K})$.
\end{rem}

The above remarks imply the following result.

\begin{prop}
Let $r_{BD}$ be a non-skewsymmetric $r$-matrix over $\overline{\mathbb{K}}$. There exists a one-to-one correspondence between $H_{BD}^{1}(G, r_{BD})$
and
gauge equivalence classes of Lie bialgebra structures on $\mathfrak{g}(\mathbb{K})$
with classical double $\mathfrak{g}(\mathbb{K})\oplus \mathfrak{g}(\mathbb{K})$ and $\overline{\mathbb{K}}$-isomorphic to $\delta(r_{BD})$.
\end{prop}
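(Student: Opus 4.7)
My plan is to construct a map
\[\Phi: H^{1}_{BD}(r_{BD}) \longrightarrow \{\text{gauge classes of bialgebra structures}\},\qquad [X]\longmapsto [\delta(r_{X})],\]
where $r_{X}=(\mathrm{Ad}_{X}\otimes \mathrm{Ad}_{X})(r_{BD})$, and show it is a well-defined bijection onto the stated image. The well-definedness is essentially contained in the two preceding remarks: the first shows that $r_{X}$ actually defines a bialgebra structure over $\mathbb{K}$, and the second shows that equivalent cocycles yield $G(\mathbb{K})$-gauge equivalent bialgebras, so only injectivity and surjectivity require fresh work.

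For injectivity, suppose $\delta(r_{X_{1}})$ and $\delta(r_{X_{2}})$ are gauge equivalent via some $Q\in G(\mathbb{K})$. Since $\delta(r)$ only sees the skew part of $r$, the element $(\mathrm{Ad}_{Q}\otimes \mathrm{Ad}_{Q})(r_{X_{2}}) - r_{X_{1}}$ must be $\mathfrak{g}$-invariant. Both tensors satisfy $r+r^{21}=\Omega$, so this difference is skew-symmetric; but there is no nonzero skew $\mathfrak{g}$-invariant in $\mathfrak{g}\otimes \mathfrak{g}$, and hence $(\mathrm{Ad}_{QX_{2}}\otimes \mathrm{Ad}_{QX_{2}})(r_{BD}) = (\mathrm{Ad}_{X_{1}}\otimes \mathrm{Ad}_{X_{1}})(r_{BD})$. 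Setting $C:=X_{1}^{-1}QX_{2}$ we get $C\in C(r_{BD})$, so $X_{1}=QX_{2}C^{-1}$, which is exactly the cocycle equivalence $X_{1}\sim X_{2}$.

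For surjectivity, given $\delta$ on $\mathfrak{g}(\mathbb{K})$ with double $\mathfrak{g}(\mathbb{K})\oplus \mathfrak{g}(\mathbb{K})$ and $\overline{\mathbb{K}}$-isomorphic to $\delta(r_{BD})$, I would first invoke the lemma at the start of the section to write $\delta = dr$ with $r+r^{21}=\Omega$. The assumed $\overline{\mathbb{K}}$-isomorphism together with the Belavin--Drinfeld classification over $\overline{\mathbb{K}}$ (which realises the isomorphism by an inner automorphism) yields $X\in G(\overline{\mathbb{K}})$ with $r = (\mathrm{Ad}_{X}\otimes \mathrm{Ad}_{X})(r_{BD})$. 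Galois invariance of $r$ then gives $\sigma(r_{X})=r_{X}$ for every $\sigma\in \mathrm{Gal}(\overline{\mathbb{K}}/\mathbb{K})$, and Theorem \ref{CartanK} yields $X^{-1}\sigma(X)\in C(r_{BD})$, so $X\in Z(r_{BD})$ and $\Phi([X])=[\delta]$.

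The hard point is the injectivity step. Gauge equivalence of bialgebra structures over $\mathbb{K}$ does not a priori force equality of the underlying $r$-matrices, only equality modulo an ad-invariant tensor in $\mathfrak{g}\otimes \mathfrak{g}$. The normalisation $r+r^{21}=\Omega$ is what eliminates this ambiguity by making the correction skew and therefore zero, allowing one to pass from gauge equivalence of bialgebras to the cocycle equivalence inside $Z(r_{BD})$.
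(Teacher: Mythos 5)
Your proof is correct and follows essentially the same route as the paper, which simply asserts that the two preceding remarks (well-definedness of $[X]\mapsto[\delta(r_X)]$) together with the discussion surrounding Theorem \ref{CartanK} (surjectivity) imply the result. The only step you make explicit that the paper leaves implicit is injectivity, and your argument there --- using $r+r^{21}=\Omega$ to force the ad-invariant discrepancy to be skew and hence zero, since the invariants of $\mathfrak{g}\otimes\mathfrak{g}$ are spanned by the symmetric $\Omega$ --- is sound.
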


\section{Belavin-Drinfeld cohomologies for $sl(n)$}

Our next goal is to compute $H_{BD}^{1}(GL(n), r_{BD})$.
Let us first restrict ourselves to $\mathfrak{g}=sl(n)$
and the cohomology associated to the Drinfeld--Jimbo $r$-matrix $r_{DJ}$.
In this section we assume that $G=GL(n)$.

\begin{lem}\label{decomp}
Let $X\in GL(n,\overline{\mathbb{K}})$. Assume that for any $\sigma\in \Gal(\overline{\mathbb{K}}/\mathbb{K})$, $X^{-1}\sigma(X)\in \mathrm{diag}(n,\overline{\mathbb{K}})$. Then there exist $Q\in GL(n,\mathbb{K})$ and $D\in \mathrm{diag}(n,\overline{\mathbb{K}})$ such that $X=QD$.
\end{lem}

\begin{proof}
Let  $\sigma\in \Gal(\overline{\mathbb{K}}/\mathbb{K})$ and $\sigma(X)=XD_{\sigma}$, where $D_{\sigma}=\mathrm{diag}(d_{1},\ldots,d_{n})$. Here the elements $d_{i}$ depend on $\sigma$. Then $\sigma(x_{ij})=x_{ij}d_{j}$, for any $i$, $j$.

On the other hand, in each column of $X$ there exists a nonzero element. Let us denote these elements by $x_{i_{1}1}$, \ldots, $x_{i_{n}n}$. For $j=1$,  $\sigma(x_{i1})=x_{i1}d_{1}$ and
$\sigma(x_{i_{1}1})=x_{i_{1}1}d_{1}$. These relations imply that
$\sigma(x_{i1}/x_{i_{1}1})=x_{i1}/x_{i_{1}1}$ for any $\sigma\in \Gal(\overline{\mathbb{K}}/\mathbb{K})$ and thus $x_{i1}/x_{i_{1}1}\in\mathbb{K}$, for any $i$.

Similarly,
$x_{i2}/x_{i_{2}2}\in\mathbb{K}$, \ldots, $x_{in}/x_{i_{n}n}\in\mathbb{K}$, for any $i$. Let $Q=(k_{ij})$ be the matrix whose elements are
$k_{ij}=x_{ij}/x_{i_{j}j}$, for any $i$ and $j$.

Thus $X=QD$, where $Q\in GL(n,\mathbb{K})$ and
$D=\mathrm{diag}(x_{i_{1}1}, \ldots, x_{i_{n}n})$.

\end{proof}

\begin{prop}\label{case DJ}
For $\mathfrak{g}=sl(n)$, the Belavin--Drinfeld cohomology $H_{BD}^{1}(GL(n), r_{DJ})$ associated to $r_{DJ}$ and to the group
$GL(n)$ is trivial.
\end{prop}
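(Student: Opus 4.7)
The plan is to reduce directly to Lemma \ref{decomp} once the centralizer of the Drinfeld--Jimbo $r$-matrix has been identified. First I would check that for $r_{DJ}$, which corresponds to the empty admissible triple $(\Gamma_{1}, \Gamma_{2}, \tau) = (\emptyset, \emptyset, \emptyset)$, the centralizer $C(r_{DJ})$ inside $GL(n, \overline{\mathbb{K}})$ equals the diagonal subgroup $\mathrm{diag}(n, \overline{\mathbb{K}})$. The inclusion $\mathrm{diag}(n, \overline{\mathbb{K}}) \subseteq C(r_{DJ})$ is immediate: for $D = \mathrm{diag}(d_{1}, \ldots, d_{n})$ one has $\mathrm{Ad}_{D}(e_{ij}) = (d_{i}/d_{j}) e_{ij}$, so each summand $e_{\alpha} \otimes e_{-\alpha}$ is preserved by $\mathrm{Ad}_{D} \otimes \mathrm{Ad}_{D}$, while the Cartan part $r_{0}$ is obviously fixed. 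The reverse inclusion follows from the $F(r)$-eigenspace argument used inside the proof of Theorem \ref{CartanK}: any $Y$ centralizing $r_{DJ}$ must preserve both Borel subalgebras $\mathfrak{b}_{\pm}$, which in $GL(n)$ forces $Y$ to lie in their intersection, namely the diagonal torus.

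With this in hand, let $X \in Z(r_{DJ})$ be an arbitrary Belavin--Drinfeld cocycle. By definition $X^{-1} \sigma(X) \in C(r_{DJ}) = \mathrm{diag}(n, \overline{\mathbb{K}})$ for every $\sigma \in Gal(\overline{\mathbb{K}}/\mathbb{K})$, so $X$ satisfies exactly the hypothesis of Lemma \ref{decomp}. That lemma produces a factorization $X = QD$ with $Q \in GL(n, \mathbb{K})$ and $D \in \mathrm{diag}(n, \overline{\mathbb{K}}) = C(r_{DJ})$. Reading this as $X = Q \cdot \mathrm{id} \cdot D$ exhibits $X$ as equivalent to the identity cocycle in the sense of Definition of cocycle equivalence, so every class in $H^{1}_{BD}(r_{DJ})$ coincides with the trivial class.

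The only step that does any real work is the identification of $C(r_{DJ})$, and most of that is already packaged inside the proof of Theorem \ref{CartanK}; Lemma \ref{decomp} is designed precisely to handle the resulting Galois descent in one line. A minor bookkeeping point worth noting is that we are working with $GL(n)$ rather than $SL(n)$: scalar matrices lie in both $\mathrm{diag}(n, \overline{\mathbb{K}})$ and $C(r_{DJ})$, so the central direction is absorbed harmlessly into $D$ and does not interfere with the equivalence. I anticipate no genuine obstacles, which is consistent with the triviality of the cohomology being asserted.
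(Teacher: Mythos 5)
Your proposal is correct and follows essentially the same route as the paper: identify $C(r_{DJ})=\mathrm{diag}(n,\overline{\mathbb{K}})$, apply Lemma \ref{decomp} to write $X=QD$, and read this as equivalence to the identity cocycle. The only difference is that you supply an argument for the centralizer computation (via the $F(r)$-eigenspace/Borel-preservation reasoning from Theorem \ref{CartanK}), whereas the paper simply cites \cite{BD} for this fact.
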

\begin{proof} It easily follows from the proof of Theorem \ref{CartanK} that the centralizer of $r_{DJ}$ is
$C(GL(n),r_{DJ})=\mathrm{diag}(n,\overline{\mathbb{K}})$.
Let us show that any cocycle is equivalent to the identity.
Indeed, let $X=(x_{ij})$ be a cocycle from $Z(GL(n), r_{DJ})$, i.\ e.,  $X^{-1}\sigma(X)\in C(GL(n),r_{DJ})$,
for any $\sigma\in \Gal(\overline{\mathbb{K}}/\mathbb{K})$.

It follows that $X^{-1}\sigma(X)\in\mathrm{diag}(n,\overline{\mathbb{K}}) $.
According to Lemma \ref{decomp}, there exists $Q\in GL(n,\mathbb{K})$ and
$D\in \mathrm{diag}(n,\overline{\mathbb{K}})$ such that $X=QD$. This proves that $X$ is equivalent to the identity.
\end{proof}

It turns out that the above result is true not only for $r_{DJ}$. Given an arbitrary
$r$-matrix $r_{BD}$ from the Belavin--Drinfeld list, the corresponding cohomology is also trivial.
First we will take a closer look to the centralizer $C(GL(n),r_{BD})$ of an $r$-matrix $r_{BD}$. Due to Theorem \ref{CartanK}, the following result holds.

\begin{lem}\label{lem1}
Let $r_{BD}$ be an arbitrary $r$-matrix from the Belavin--Drinfeld list. Then \[C(GL(n), r_{BD})\subseteq \mathrm{diag}(n,\overline{\mathbb{K}}).\]
\end{lem}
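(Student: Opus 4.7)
The plan is to specialize the operator-theoretic argument from the proof of Theorem~\ref{CartanK} to the case of trivial Galois twist. An element $X$ lies in $C(r_{BD})$ precisely when $(\mathrm{Ad}_{X} \otimes \mathrm{Ad}_{X})(r_{BD}) = r_{BD}$, which is exactly the situation handled by the generalized-eigenspace machinery already set up in that proof, with $Y := X$ and $\sigma$ acting trivially.

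Concretely, I would recall the operator $F(r_{BD}) : \mathfrak{g} \to \mathfrak{g}$ defined by $F(r)(x) = r' K(r'', x)$ for $r = \sum r' \otimes r''$, and the resulting generalized eigenspace decomposition $\mathfrak{g} = \mathfrak{g}^{0}_{r_{BD}} \oplus \mathfrak{g}'_{r_{BD}} \oplus \mathfrak{g}^{1}_{r_{BD}}$, which by the analysis in Theorem~\ref{CartanK} recovers the fixed triangular decomposition via $\mathfrak{g}^{0}_{r_{BD}} + \mathfrak{g}'_{r_{BD}} = \mathfrak{b}_{-}$ and $\mathfrak{g}^{1}_{r_{BD}} + \mathfrak{g}'_{r_{BD}} = \mathfrak{b}_{+}$. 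The key equivariance $F((\mathrm{Ad}_{X} \otimes \mathrm{Ad}_{X})(r)) = \mathrm{Ad}_{X} \circ F(r) \circ \mathrm{Ad}_{X^{-1}}$, combined with the hypothesis $X \in C(r_{BD})$, yields $[\mathrm{Ad}_{X}, F(r_{BD})] = 0$. Consequently $\mathrm{Ad}_{X}$ preserves every generalized eigenspace of $F(r_{BD})$, and in particular $\mathrm{Ad}_{X}(\mathfrak{b}_{\pm}) = \mathfrak{b}_{\pm}$.

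To conclude, I would invoke the standard fact that the opposite Borel subalgebras of $\mathfrak{sl}(n, \overline{\mathbb{K}})$ are self-normalizing in $GL(n, \overline{\mathbb{K}})$: preservation of both $\mathfrak{b}_{+}$ and $\mathfrak{b}_{-}$ forces $X \in B_{+} \cap B_{-} = H$, which sits inside $\mathrm{diag}(n, \overline{\mathbb{K}})$. The main point requiring care is this last passage from the infinitesimal statement $\mathrm{Ad}_{X}(\mathfrak{b}_{\pm}) = \mathfrak{b}_{\pm}$ to the group-level conclusion $X \in H$; the rest is a direct re-run of the eigenspace argument already spelled out in Theorem~\ref{CartanK}, with no further Galois-theoretic input required.
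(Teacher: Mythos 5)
Your proposal is correct and follows essentially the same route as the paper: the paper derives this lemma directly from Theorem~\ref{CartanK} (whose eigenspace argument for $F(r)$, applied with trivial Galois twist $\sigma=\mathrm{id}$ and $Y=X$, gives $\mathrm{Ad}_X(\mathfrak{b}_{\pm})=\mathfrak{b}_{\pm}$ and hence $X$ in the Cartan subgroup), which is exactly the specialization you spell out. Your added care about passing from $\mathrm{Ad}_X(\mathfrak{b}_{\pm})=\mathfrak{b}_{\pm}$ to $X\in B_{+}\cap B_{-}=H$ is a legitimate filling-in of a step the paper leaves implicit.
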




For $sl(n)$ we are now able to give the exact description of $C(GL(n), r_{BD})$.

\begin{lem}\label{lem2}
$C(GL(n), r_{BD})$ consists of all diagonal matrices $T=\mathrm{diag}(t_{1},\ldots,t_{n})$ such that $t_{i}=s_{i}s_{i+1}\ldots s_{n}$,
where $s_{i}\in \overline{\mathbb{K}}$ satisfy the condition: $s_{i}=s_{j}$ if  $\alpha_{i}\in \Gamma_{1}$ and $\tau(\alpha_{i})=\alpha_{j}$.
\end{lem}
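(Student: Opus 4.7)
The plan is to use Lemma \ref{lem1} to reduce to diagonal matrices, then compute the action of $\mathrm{Ad}_T$ on each term of $r_{BD}$ and translate the invariance conditions into constraints on the entries of $T$.

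First, by Lemma \ref{lem1}, any $T \in C(r_{BD})$ is of the form $T = \mathrm{diag}(t_1, \ldots, t_n)$ with $t_i \in \overline{\mathbb{K}}^{\times}$. Next I would use the fact that for $T$ diagonal and $\alpha = \epsilon_i - \epsilon_j$, the operator $\mathrm{Ad}_T$ acts on the root vector $e_\alpha$ (identified with the matrix unit $E_{ij}$) by the scalar $\alpha(T) = t_i/t_j$. This immediately gives: the Cartan part $r_0 \in \mathfrak{h}^{\otimes 2}$ is fixed; and each summand $e_\alpha \otimes e_{-\alpha}$ is fixed since $\alpha(T) \cdot (-\alpha)(T) = 1$. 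So the only nontrivial constraints come from the off-diagonal terms $e_\beta \wedge e_{-\tau^k(\beta)}$, on which $\mathrm{Ad}_T \otimes \mathrm{Ad}_T$ acts by the scalar $\beta(T)/\tau^k(\beta)(T)$.

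The invariance condition therefore reduces to $\beta(T) = \tau^k(\beta)(T)$ for all $\beta \in (\mathbb{Z}\Gamma_1)^+$ and all relevant $k$. Since $\tau$ is $\mathbb{Z}$-linear on $\Gamma_1$ and $\beta$ ranges over positive integer combinations of simple roots in $\Gamma_1$, this is equivalent to the simple-root condition $\alpha_i(T) = \tau(\alpha_i)(T)$ for every $\alpha_i \in \Gamma_1$. Writing $\alpha_i = \epsilon_i - \epsilon_{i+1}$, this reads $t_i/t_{i+1} = t_j/t_{j+1}$ whenever $\tau(\alpha_i) = \alpha_j$.

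Finally I would introduce the claimed parameterization: set $s_n := t_n$ and $s_i := t_i/t_{i+1}$ for $1 \leq i \leq n-1$. Then $t_i = s_i s_{i+1} \cdots s_n$, and $\alpha_i(T) = s_i$. The constraint above becomes precisely $s_i = s_j$ whenever $\tau(\alpha_i) = \alpha_j$, and this description is evidently both necessary and sufficient. Conversely, any $T$ of the stated form satisfies $(\mathrm{Ad}_T \otimes \mathrm{Ad}_T)(r_{BD}) = r_{BD}$ by reversing the computation, which gives the two-sided characterization.

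The only real subtlety is verifying that checking invariance on the simple roots in $\Gamma_1$ suffices for the full sum over $(\mathbb{Z}\Gamma_1)^+$; this is not an obstacle but requires the remark that $\tau$ is a root-system isometry on $\Gamma_1$, hence the multiplicative character $\alpha \mapsto \alpha(T)/\tau(\alpha)(T)$ extended $\mathbb{Z}$-linearly is trivial on $\mathbb{Z}\Gamma_1$ as soon as it is trivial on $\Gamma_1$. Aside from that, the argument is a direct computation.
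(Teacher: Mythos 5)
Your proof is correct and follows essentially the same route as the paper: reduce to diagonal $T$ via Lemma \ref{lem1}, observe that invariance of the Cartan and Drinfeld--Jimbo parts is automatic, extract the condition $t_i t_{i+1}^{-1}=t_j t_{j+1}^{-1}$ from the terms $e_{\beta}\wedge e_{-\tau^k(\beta)}$, and reparameterize by $s_i=t_i t_{i+1}^{-1}$, $s_n=t_n$. Your explicit remark that triviality of the character $\alpha\mapsto\alpha(T)/\tau(\alpha)(T)$ on the simple roots of $\Gamma_1$ suffices for all of $(\mathbb{Z}\Gamma_1)^+$ and all powers of $\tau$ is a point the paper passes over silently, so your write-up is if anything slightly more complete.
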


\begin{proof}
Let us assume that $r_{BD}$ is associated to an admissible triple $(\Gamma_{1},\Gamma_{2},\tau)$, where $\Gamma_{1}, \Gamma_{2}\subset\{\alpha_{1},\ldots,\alpha_{n-1}\}$. Let $T\in C(GL(n), r_{BD})$. According to Lemma \ref{lem1}, $T\in \mathrm{diag}(n,\overline{\mathbb{K}})$, therefore we put $T=\mathrm{diag}(t_{1},\ldots ,t_{n})$.
Now we note that $T\in C(GL(n), r_{BD})$ if and only if
$(\mathrm{Ad}_{T}\otimes \mathrm{Ad}_{T})(e_{\tau^{k}(\alpha)}\wedge e_{-\alpha})=
e_{\tau^{k}(\alpha)}\wedge e_{-\alpha}$ for any $\alpha\in \Gamma_{1}$ and any positive integer $k$.

For simplicity, let us take an arbitrary $\alpha_{i}\in \Gamma_{1}$ and suppose that $\tau(\alpha_{i})=\alpha_{j}$. We then get
$t_{i}t_{i+1}^{-1}=t_{j}t_{j+1}^{-1}$. Denote $s_{j}:=t_{j}t_{j+1}^{-1}$ for each $j\leq n-1$ and $s_{n}=t_{n}$. Then $t_{j}=s_{j}s_{j+1}\ldots s_{n}$ and moreover
$s_{i}=s_{j}$.

\end{proof}

\begin{thm}\label{sl}
For $\mathfrak{g}=sl(n)$, the Belavin--Drinfeld cohomology $H_{BD}^{1}(GL(n), r_{BD})$ associated to any $r_{BD}$ is trivial. Any Lie bialgebra structure on $\mathfrak{g}(\mathbb{K})$ is of the form $\delta(a)=[r,a\otimes 1+1\otimes a]$, where $r$ is an $r$-matrix which is $GL(n,\mathbb{K})$--equivalent to a non-skewsymmetric $r$-matrix from the Belavin--Drinfeld list.

\end{thm}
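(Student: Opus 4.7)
The plan is to prove the first assertion---triviality of $H^1_{BD}(r_{BD})$ for every $r_{BD}$---from which the second follows at once via the correspondence between cohomology classes and gauge equivalence classes of bialgebra structures established earlier. It suffices to show that any cocycle $X \in Z(r_{BD})$ is equivalent to the identity. By Lemma~\ref{lem1} the cocycle condition places $X^{-1}\sigma(X)$ in $\mathrm{diag}(n,\overline{\mathbb{K}})$, so Lemma~\ref{decomp} produces a preliminary decomposition $X = Q_0 D_0$ with $Q_0 \in GL(n,\mathbb{K})$ and $D_0 \in \mathrm{diag}(n,\overline{\mathbb{K}})$. Using $\sigma(Q_0) = Q_0$, the cocycle condition reduces to the requirement $D_0^{-1}\sigma(D_0) \in C(r_{BD})$ for every $\sigma$; the remaining task is to factor $D_0 = D_{\mathbb{K}} C_0$ with $D_{\mathbb{K}} \in \mathrm{diag}(n,\mathbb{K})$ and $C_0 \in C(r_{BD})$.

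For the factorization of $D_0$, I would use the explicit description in Lemma~\ref{lem2}: the change of variables $s_i = t_i/t_{i+1}$ (for $i<n$) and $s_n = t_n$ identifies $\mathrm{diag}(n,\overline{\mathbb{K}})$ with $(\overline{\mathbb{K}}^*)^n$, under which $C(r_{BD})$ becomes the subgroup cut out by the equalities $s_i = s_j$ whenever $\alpha_i \in \Gamma_1$ and $\tau(\alpha_i) = \alpha_j$. The admissibility of $(\Gamma_1,\Gamma_2,\tau)$---in particular the nilpotency of $\tau$ together with its injectivity---guarantees that the graph on $\{1,\dots,n\}$ determined by these equalities is a disjoint union of finite linear chains, with no cycles. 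Consequently $C(r_{BD})$ is $Gal(\overline{\mathbb{K}}/\mathbb{K})$-equivariantly isomorphic to $(\overline{\mathbb{K}}^*)^m$ for some $m$ (one factor per chain or isolated vertex), each factor carrying the standard Galois action. Hilbert's Theorem~90 then yields $H^1(Gal(\overline{\mathbb{K}}/\mathbb{K}), C(r_{BD})) = 0$, so the $1$-cocycle $\sigma \mapsto D_0^{-1}\sigma(D_0)$ is a coboundary: there exists $C_0 \in C(r_{BD})$ with $D_0^{-1}\sigma(D_0) = C_0^{-1}\sigma(C_0)$, whence $D_{\mathbb{K}} := D_0 C_0^{-1}$ lies in $\mathrm{diag}(n,\mathbb{K})$. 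Combining, $X = (Q_0 D_{\mathbb{K}}) C_0$ exhibits $X$ as equivalent to the identity.

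The main obstacle is verifying that the parameterization of Lemma~\ref{lem2} presents $C(r_{BD})$ as a genuinely \emph{split} torus over $\mathbb{K}$; this is what permits applying Hilbert 90 factor by factor. The nilpotency of $\tau$ built into admissibility is essential here: if $\tau$ admitted cycles, the Galois action could permute the factors nontrivially and the torus could fail to be split. A more elementary alternative, avoiding the language of Hilbert 90, is to construct $D_{\mathbb{K}}$ entry by entry along each chain $\alpha_{i_0} \to \alpha_{i_1} \to \cdots \to \alpha_{i_p}$: choose the terminal ratio freely in $\mathbb{K}^*$ and propagate backwards, using that the ratios $u_i/u_j$ of successive diagonal ratios of $D_0$ lie in $\mathbb{K}^*$ precisely when $\tau(\alpha_i) = \alpha_j$. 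This yields the same factorization and avoids any case analysis on the Belavin--Drinfeld list.
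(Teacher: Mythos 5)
Your proposal is correct and shares the paper's skeleton --- Lemma~\ref{lem1} to force $X^{-1}\sigma(X)$ diagonal, Lemma~\ref{decomp} to write $X=Q_0D_0$, and Lemma~\ref{lem2} to describe $C(r_{BD})$ --- but it handles the crucial last step, factoring $D_0=D_{\mathbb{K}}C_0$, differently. The paper argues by hand: it passes to the ratios $s_i=d_id_{i+1}^{-1}$, fixes a single root $\alpha_{i_0}\in\Gamma_1\setminus\Gamma_2$, sets $k_j=s_js_{i_0}^{-1}$, and writes $D=KC$ with $K=\mathrm{diag}(k_1\cdots k_n,\dots,k_n)$ and $C=\mathrm{diag}(s_{i_0}^n,\dots,s_{i_0})$. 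You instead observe that in the $s$-coordinates $C(r_{BD})$ is a split $\mathbb{K}$-torus (the $\tau$-relations form acyclic chains by admissibility), so the $1$-cocycle $\sigma\mapsto D_0^{-1}\sigma(D_0)$ with values in $C(r_{BD})$ is a coboundary by Hilbert~90, giving the factorization uniformly; your ``elementary alternative'' propagating $\mathbb{K}^*$-choices along each $\tau$-chain is the concrete form of the same idea. Your route is not only different but actually more robust: the paper's step asserting $k_j=s_js_{i_0}^{-1}\in\mathbb{K}$ for \emph{all} $j$ is only justified for indices $j$ lying in the $\tau$-chain of $\alpha_{i_0}$ (the relation $s_js_i^{-1}\in\mathbb{K}$ is established only along edges $\tau(\alpha_i)=\alpha_j$, and there may be several disjoint chains as well as unconstrained indices), whereas your chain-by-chain (or Hilbert~90) argument makes no such implicit assumption and covers, e.g., the case $\Gamma_1=\emptyset$ where $C(r_{BD})$ is the full diagonal torus. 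The only point worth making explicit is that the cocycle $\sigma\mapsto D_0^{-1}\sigma(D_0)$ is continuous (it factors through a finite quotient of $Gal(\overline{\mathbb{K}}/\mathbb{K})$ since the entries of $D_0$ lie in a finite extension), so that profinite Hilbert~90 applies; with that remark your argument is complete.
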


\begin{proof}
Let $X$ be a cocycle associated to $r_{BD}$ which is a fixed $r$-matrix from the Belavin--Drinfeld list. Thus $X^{-1}\sigma(X)$ belongs to the centralizer of the $r_{BD}$.
On the other hand, according to Lemma \ref{lem1}, $C(GL(n), r_{BD})\subseteq \mathrm{diag}(n,\overline{\mathbb{K}})$.

We then obtain that for any $\sigma\in \Gal(\overline{\mathbb{K}}/\mathbb{K})$,
$X^{-1}\sigma(X)$ is diagonal. By Lemma \ref{decomp}, we have a decomposition $X=QD$, where $Q\in GL(n,\mathbb{K})$ and $D\in \mathrm{diag}(n,\overline{\mathbb{K}})$. Since $\sigma(Q)=Q$, we have $X^{-1}\sigma(X)=(QD)^{-1}\sigma(QD)=D^{-1}Q^{-1}Q\sigma(D)=D^{-1}\sigma(D)$. Recall that $X^{-1}\sigma(X)\in C(GL(n),r_{BD})$. It follows that $D^{-1}\sigma(D)\in C(GL(n), r_{BD})$.

Let $D=\mathrm{diag}(d_{1},\ldots ,d_{n})$. Then $\mathrm{diag}(d_{1}^{-1}\sigma(d_{1}),\ldots ,d_{n}^{-1}\sigma(d_{n}))\in C(GL(n), r_{BD})$. Denote $t_{i}=d_{i}^{-1}\sigma(d_{i})$
and $T=\mathrm{diag}(t_{1},\ldots ,t_{n})$. According to Lemma \ref{lem2},
$T\in C(GL(n), r_{BD})$ if and only if $t_{i}t_{i+1}^{-1}=t_{j}t_{j+1}^{-1}$. Equivalently, $\sigma(d_{i}^{-1}d_{i+1}d_{j}d_{j+1}^{-1})=d_{i}^{-1}d_{i+1}d_{j}d_{j+1}^{-1}$. It follows that $d_{i}^{-1}d_{i+1}d_{j}d_{j+1}^{-1}\in \mathbb{K}$. Let $s_{i}:=d_{i}d_{i+1}^{-1}$
for any $i$ and $s_{n}=d_{n}$. Then we get $s_{j}s_{i}^{-1}\in \mathbb{K}$.

Let us fix a root $\alpha_{i_{0}}\in \Gamma_{1}\setminus\Gamma_{2}$ and let $\tau^{j}(\alpha_{i_{0}})=\alpha_{j}$. Then $s_{j}s_{i_{0}}^{-1}\in \mathbb{K}$, for any $j$. Denote $k_{j}:=s_{j}s_{i_{0}}^{-1}$.

On the other hand, $d_{j}=s_{j}s_{j+1}\ldots s_{n-1}s_{n}=k_{j}k_{j+1}\ldots k_{n}s_{i_{0}}^{n-j+1}$. Let \[K:=\mathrm{diag}(k_{1}k_{2}\ldots k_{n},k_{2}\ldots k_{n},\ldots , k_{n}),\]
\[C:=\mathrm{diag}(s_{i_{0}}^{n}, s_{i_{0}}^{n-1},\ldots , s_{i_{0}}).\] Note that
$D=KC$ and $K\in GL(n,\mathbb{K})$. Moreover, according to Lemma \ref{lem2}, $C\in C(GL(n), r_{BD})$.

Summing up, we have obtained that  if $X$ is any cocycle associated to $r_{BD}$, then $X=QD=QKC$, with $QK\in GL(n,\mathbb{K})$, $C\in C(GL(n), r_{BD})$.
This ends the proof.

\end{proof}

\section{Belavin-Drinfeld cohomologies for orthogonal algebras}

The next step in our investigation of Belavin--Drinfeld cohomologies
is for orthogonal algebras $o(m)$. We begin with
the case of Drinfeld--Jimbo $r$-matrix. In what follows, we will use the following split
form of the orthogonal algebra $o(n,\mathbb{C})$  and $o(n,\mathbb{K})$:

$$
o(n)=\{A\in gl(n): A^T S+SA=0\}
$$
Here $S$ is the matrix with 1 on the second diagonal and
zero elsewhere.
The group
$$
SO(n)=\{X\in SL(n): X^TSX=S \}
$$
naturally acts on $o(n)$.
It follows from Theorem \ref{CartanK} that $C(SO(n), r_{DJ})$ coincides with the maximal torus of $SO(n)$.
Our main result about Belavin-Drinfeld cohomologies for orthogonal algebras is the following:

\begin{thm} \label{orthog} Let $\mathfrak{g}=o(m)$ and $r_{DJ}$ be the Drinfeld--Jimbo $r$-matrix.
Then $H_{BD}^{1}(SO(m), r_{DJ})$ is trivial.

\end{thm}

\begin{proof}
(i) Assume $m=2n$. On $\overline{\mathbb{K}}^{m}$ let us fix the bilinear form
$$B(\mathbf{x},\mathbf{y})=\sum_{i=1}^{m} x_iy_{m+1-i}.$$

Let $X\in SO(m,\overline{\mathbb{K}})$ be a cocycle associated to $r_{DJ}$.
Thus $X^{-1}\sigma(X)\in C(SO(m), r_{DJ})$.
Recall that $C(SO(m), r_{DJ})=\mathrm{diag}(m,\overline{\mathbb{K}})\cap SO(m,\overline{\mathbb{K}})$.
Therefore $X^{-1}\sigma(X)\in \mathrm{diag}(m,\overline{\mathbb{K}})$. By Lemma
\ref{decomp}, one has the decomposition $X=QD$, where $Q\in GL(m,\mathbb{K})$ and $D\in
\mathrm{diag}(m,\overline{\mathbb{K}})$. Let us write $D=\mathrm{diag}(d_{1},\ldots ,d_{2n})$ and denote by $q_{i}$ the columns of $Q$.
Then $X=QD$ is equivalent to $Q^{T}SQ=D^{-1}SD^{-1}$, which in turn gives that $B(q_{i},q_{i'})d_{i}d_{i'}=\delta_{i}^{2n+1-i'}$.
We get $B(q_{i},q_{i'})=0$ if $i+i'\neq 2n+1$ and $B(q_{i},q_{2n+1-i})d_{i}d_{2n+1-i}=1$.
Let $k_{i}:=B(q_{i},q_{2n+1-i})$. Since  $Q\in GL(2n,\mathbb{K})$, $k_{i}\in \mathbb{K}$.
Because $k_{i}^{-1}= d_{i}d_{2n+1-i}$, it follows that $D=Q_{1}D_{1}$, where
$$Q_{1}=\mathrm{diag}(k_{1}^{-1},\ldots ,k_{n}^{-1},1\ldots ,1)\ \mathrm{and}$$
$$D_{1}=\mathrm{diag}(d_{1}k_{1},\ldots ,d_{n}k_{n},d_{n+1},\ldots ,d_{2n}).$$
We note that $X=(QQ_{1})D_{1}$, $D_1\in SO(2n)$ and hence, $D_{1}\in C(SO(2n), r_{DJ})$.
Then, clearly
$QQ_{1}\in SO(2n, \mathbb{K})$.
which proves that $X$ is equivalent to the identity.

(ii) Now consider $m=2n+1$.
By Lemma \ref{decomp}, we may write again $X=QD$, where $Q\in GL(m,\mathbb{K})$ and $D\in\mathrm{diag}(m,\overline{\mathbb{K}})$.

 Let $k_{i}:=B(q_{i},q_{2n+2-i})\in \mathbb{K}$. Repeating the computations as in (i), one gets $k_{i}^{-1}= d_{i}d_{2n+2-i}$.
If $i=n+1$, $d_{n+1}^{2}=k_{n+1}^{-1}\in\mathbb{K}$. This implies that either $d_{n+1}\in \mathbb{K}$ or $d_{n+1}\in j\mathbb{K}$, where $j^{2}=\hbar$.

Actually we can prove that the second case is impossible.

Let us denote $R=Q^{-1}$ and its rows by $r_{1}$,\ldots ,$r_{2n+1}$. Then $X^{T}SX=S$ is equivalent to $RSR^{T}=DSD$,
which in turn gives the following:
$B(r_{i},r_{i'})=0$, for all $i\neq i'$, $B(r_{i},r_{i})=d_{i}d_{2n+2-i}$ for all $i$.

Let us take an arbitrary orthogonal basis $v_{1}$,\ldots ,$v_{2n+1}$ in
$\mathbb{K}^{2n+1}$ and denote $B(v_{i},v_{i})=A_{i}$.

The matrix $V$ with rows $v_{i}$ satisfies $VSV^{T}=\mathrm{diag}(A_{1},\ldots ,A_{2n+1})$. This relation
implies that $A_{1}\ldots A_{2n+1}=(-1)^{n}\det(V)^{2}=((\sqrt{-1})^{n}\det(V))^{2}$. Therefore $A_{1}\ldots A_{2n+1}=l^{2}$ is a square of some $l\in \mathbb{K}$.

On the other hand, if $M$ is the change of basis matrix from $r_{i}$ to $v_{i}$,
then \[M^{T}\mathrm{diag}(A_{1},\ldots ,A_{2n+1})M=\mathrm{diag}(d_{1}d_{2n+1},\ldots ,d_{n+1}^{2},\ldots ,d_{2n+1}d_{1}).\]
By taking the determinant on both sides, one obtains
\[\det(M)^{2}A_{1}\ldots A_{2n+1}=
(d_{1}d_{2n+1})^{2}\ldots (d_{n}d_{n+2})^{2}d_{n+1}^{2}\]
which implies that $d_{n+1}^{2}$ is a square in $\mathbb{K}$, and consequently,
$d_{n+1}\in \mathbb{K}$.

Let us show that $X$ is equivalent to the trivial cocycle. Consider
 \[Q_{1}=\mathrm{diag}(k_{1}^{-1},\ldots ,k_{n}^{-1},d_{n+1},1,\ldots ,1),\]
\[D_{1}=\mathrm{diag}(d_{1}k_{1},\ldots ,d_{n}k_{n},1,d_{n+2}\ldots ,d_{2n+1}).\]
We have $D=Q_{1}D_{1}$ and $D_{1}\in SO(2n+1,\overline{\mathbb{K}})$. Thus $X=(QQ_{1})D_{1}$,
$QQ_{1}\in SO(2n+1, \mathbb{K})$, $D_{1}\in C(SO(2n+1), r_{DJ})$, i.\ e., $X$ is equivalent to the trivial cocycle, which completes the proof of triviality
of $H_{BD}^{1}(SO(m), r_{DJ})$.

\end{proof}


Regarding Belavin--Drinfeld cohomology $H_{BD}^{1}(SO(2n), r_{BD})$ for an arbitrary $r_{BD}$,
we can give an example where this set is non-trivial. Let us denote
the simple roots of $o(2n)$ by $\alpha_{i}=\epsilon_{i}-\epsilon_{i+1}$, for $i<n$, $\alpha_{n}=\epsilon_{n-1}+\epsilon_{n}$,
where $\{\epsilon_{i}\}$ is an orthonormal basis of $\mathfrak{h}^{*}$.
Let $\Gamma_{1}=\{\alpha_{n-1}\}$, $\Gamma_{2}=\{\alpha_{n}\}$ and
$\tau(\alpha_{n-1})=\alpha_{n}$.
Denote by $r_{BD}$ the $r$-matrix corresponding to the triple $(\Gamma_{1},\Gamma_{2},\tau)$ and $s$,
where $s\in \mathfrak{h}\wedge\mathfrak{h}$ satisfies $((\alpha_{n-1}-\alpha_{n}))\otimes 1)(2s)=((\alpha_{n-1}+\alpha_{n}))\otimes 1)\Omega_{0}$.

\begin{lem}
The centralizer $C(SO(2n), r_{BD})$ consists of all diagonal matrices of the form
\[T=\mathrm{diag}(t_{1},\ldots , t_{n-1},\pm 1, \pm 1, t_{n-1}^{-1},\ldots ,t_{1}^{-1})
,\] for arbitrary nonzero $t_{1},t_{2}\in \overline{\mathbb{K}}$.
\end{lem}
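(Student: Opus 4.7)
The plan is to combine the general diagonality result for centralizers with a direct invariance computation on the specific summands of $r_{BD}$. By Lemma \ref{lem1} and its orthogonal analogue stated in Remark \ref{cent_BD}, any $T\in C(r_{BD})$ must be diagonal. Orthogonality in $O(2n)$ then forces $t_{2n+1-i}=t_i^{-1}$, so I can write $T=\mathrm{diag}(t_1,\ldots,t_n,t_n^{-1},\ldots,t_1^{-1})$ and think of $T$ as parametrized by the Cartan torus via $\epsilon_i(T)=t_i$. What remains is to read off the conditions imposed by $(\mathrm{Ad}_T\otimes\mathrm{Ad}_T)(r_{BD})=r_{BD}$.

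Next I would split $r_{BD}=s+\sum_{\alpha>0}e_\alpha\otimes e_{-\alpha}+\sum_{\beta\in(\mathbb{Z}\Gamma_1)^+}\sum_{k>0}e_\beta\wedge e_{-\tau^k(\beta)}$ and check invariance term by term. Since $\mathrm{Ad}_T$ acts trivially on $\mathfrak{h}$, the Cartan piece $s$ is automatically fixed, irrespective of the constraint $((\alpha_{n-1}-\alpha_n)\otimes 1)(2s)=((\alpha_{n-1}+\alpha_n)\otimes 1)\Omega_0$ that defines $s$. For each positive root $\alpha$, $(\mathrm{Ad}_T\otimes\mathrm{Ad}_T)(e_\alpha\otimes e_{-\alpha})=\alpha(T)\cdot(-\alpha)(T)\cdot e_\alpha\otimes e_{-\alpha}=e_\alpha\otimes e_{-\alpha}$, so the standard sum also imposes no condition. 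The entire content of the centralizer relation is therefore concentrated in the cross terms.

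Because $\Gamma_1=\{\alpha_{n-1}\}$, we have $(\mathbb{Z}\Gamma_1)^+=\{\alpha_{n-1}\}$, and because $\tau(\alpha_{n-1})=\alpha_n\notin\Gamma_1$ only $k=1$ contributes, giving the single cross term $e_{\alpha_{n-1}}\wedge e_{-\alpha_n}$. Its invariance requires $\alpha_{n-1}(T)\cdot(-\alpha_n)(T)=1$, i.e.\ $\alpha_{n-1}(T)=\alpha_n(T)$. Using $\alpha_{n-1}=\epsilon_{n-1}-\epsilon_n$ and $\alpha_n=\epsilon_{n-1}+\epsilon_n$, this translates to $t_{n-1}/t_n=t_{n-1}t_n$, hence $t_n^2=1$ and $t_n=\pm 1$. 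Orthogonality then forces $t_{n+1}=t_n^{-1}=t_n\in\{\pm 1\}$, while $t_1,\ldots,t_{n-1}\in\overline{\mathbb{K}}^{*}$ remain completely free. This yields exactly the form claimed in the lemma.

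I do not expect a real obstacle: the only point that needs slight care is confirming that the Cartan parameter $s$, despite satisfying a nontrivial relation involving $\alpha_{n-1}\pm\alpha_n$, contributes no extra constraint on $T$ — and this is automatic because $\mathrm{Ad}_T$ restricts to the identity on $\mathfrak{h}$. With this observation in place the lemma reduces to the short computation above.
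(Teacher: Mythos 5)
Your proposal is correct and follows essentially the same route as the paper: diagonality from Lemma \ref{lem1}/Remark \ref{cent_BD} plus orthogonality, the observation that the Cartan part and the standard sum $\sum_{\alpha>0}e_\alpha\otimes e_{-\alpha}$ impose no conditions, and the single cross term $e_{\alpha_{n-1}}\wedge e_{-\tau(\alpha_{n-1})}$ forcing $t_n^{2}=1$ (the paper computes the scaling factor as $t_n^{-2}$, which is the same condition). Your additional remark that $t_{n+1}=t_n^{-1}=t_n$ and that $t_1,\dots,t_{n-1}$ remain free matches the paper's conclusion.
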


\begin{proof}
We already know that $C(SO(2n), r_{BD})\subseteq\mathrm{diag}(2n,\overline{\mathbb{K}})
\cap O(2n,\overline{\mathbb{K}})$. Let $T \in C(SO(2n), r_{BD})$, where $T=\mathrm{diag}(t_{1},\ldots ,t_{n}, t_{n}^{-1},\ldots ,t_{1}^{-1})$. Since $T$ commutes with
$r_{0}$ and $r_{DJ}$, $T \in C(SO(2n), r_{BD})$ if and only if $(\mathrm{Ad}_{T}\otimes \mathrm{Ad}_{T})(e_{\alpha_{n}}\wedge e_{\alpha_{n-1}})=e_{\alpha_{n}}\wedge e_{\alpha_{n-1}}$.
One can check that $(\mathrm{Ad}_{T}\otimes \mathrm{Ad}_{T})
(e_{\alpha_{n}}\wedge e_{\alpha_{n-1}})=t_{n}^{-2}e_{\alpha_{n}}\wedge e_{\alpha_{n-1}}$. Therefore we get
$t_{n}^{-2}=1$ and the conclusion follows.
\end{proof}

\begin{prop}
  Let $\mathfrak{g}=o(2n)$ and $r_{BD}$ be the $r$-matrix corresponding to the triple $(\Gamma_{1},\Gamma_{2},\tau)$,
and some $s\in \mathfrak{h}\wedge\mathfrak{h}$, where  $\Gamma_{1}=\{\alpha_{n-1}\}$, $\Gamma_{2}=\{\alpha_{n}\}$ and
$\tau(\alpha_{n-1})=\alpha_{n}$, and $((\alpha_{n-1}-\alpha_{n}))\otimes 1)(2s)=((\alpha_{n-1}+\alpha_{n}))\otimes 1)\Omega_{0}$. Then $H_{BD}^{1}(SO(2n), r_{BD})$ is non-trivial.
\end{prop}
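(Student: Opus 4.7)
The plan is to exhibit an explicit non-trivial cocycle $X_{0}\in Z(r_{BD})$. The motivation mirrors part (ii) of Theorem~\ref{orthog}: the centralizer $C(r_{BD})$ computed in the preceding lemma contains the element $\mathrm{diag}(1,\ldots,1,-1,-1,1,\ldots,1)$ (the sign $\epsilon=-1$ in positions $n,n+1$), and this extra sign will be absorbed by a Galois twist coming from the quadratic extension $\mathbb{K}(j)/\mathbb{K}$ with $j^{2}=\hbar$.

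Concretely, I would set
\[
X_{0}:=\mathrm{diag}(1,\ldots,1,j,j^{-1},1,\ldots,1)\in GL(2n,\overline{\mathbb{K}}),
\]
with $j$ and $j^{-1}$ occupying positions $n$ and $n+1$. Since $j\cdot j^{-1}=1$ and all other paired diagonal entries multiply to $1$, one immediately checks $X_{0}^{T}SX_{0}=S$, so $X_{0}\in O(2n,\overline{\mathbb{K}})$. For any $\sigma\in Gal(\overline{\mathbb{K}}/\mathbb{K}(j))$ we have $\sigma(X_{0})=X_{0}$, hence $X_{0}^{-1}\sigma(X_{0})=I$. For the nontrivial $\sigma_{0}\in Gal(\mathbb{K}(j)/\mathbb{K})$ one computes $\sigma_{0}(X_{0})=X_{0}\cdot\mathrm{diag}(1,\ldots,1,-1,-1,1,\ldots,1)$, and by the preceding lemma the rightmost factor lies in $C(r_{BD})$. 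Thus $X_{0}\in Z(r_{BD})$.

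The core step is to show $X_{0}$ is not equivalent to $I$. Suppose to the contrary that $X_{0}=QC$ with $Q\in O(2n,\mathbb{K})$ and $C\in C(r_{BD})$. By the centralizer lemma $C$ is diagonal with $(n,n)$ and $(n+1,n+1)$ entries equal to a common $\epsilon\in\{\pm 1\}$, so $Q=X_{0}C^{-1}$ is forced to be diagonal and its $(n,n)$-entry is $\epsilon j$. But $\epsilon j\notin\mathbb{K}$, contradicting $Q\in GL(2n,\mathbb{K})$. Therefore $[X_{0}]\neq[I]$ in $H_{BD}^{1}(r_{BD})$, and the cohomology is non-trivial.

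The only real difficulty here is guessing the right cocycle, and this is essentially dictated by the structure of $C(r_{BD})$ together with the appearance of the quadratic extension $\mathbb{K}(j)$: once $X_{0}$ is written down, all three verifications (orthogonality, the cocycle condition, and inequivalence to the identity) are entirely mechanical.
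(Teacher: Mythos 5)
Your proof is correct, and it uses the same witness $X_{0}=\mathrm{diag}(1,\ldots,1,j,j^{-1},1,\ldots,1)$ that the paper's argument produces, but it reaches the conclusion by a more direct route. The paper starts from an \emph{arbitrary} cocycle $X$, invokes the decomposition $X=QD$ with $Q\in O(2n,\mathbb{K})$ and $D$ diagonal (as in the proof of Theorem~\ref{orthog}), and splits into cases according to whether $s_{n}\in\mathbb{K}$ or $s_{n}\in j\mathbb{K}$, concluding that every cocycle is equivalent to $I$ or to $X_{0}$; the statement that $X_{0}$ is itself a non-trivial cocycle is left as ``one can check.'' You supply exactly that check and nothing more: orthogonality of $X_{0}$ via $d_{i}d_{2n+1-i}=1$, the cocycle condition via the two cases $\sigma|_{\mathbb{K}(j)}=\mathrm{id}$ or $j\mapsto -j$ (with $\mathrm{diag}(1,\ldots,1,-1,-1,1,\ldots,1)\in C(r_{BD})$ by the preceding lemma), and inequivalence to $I$ because $X_{0}C^{-1}$ would be a diagonal matrix with $(n,n)$-entry $\pm j\notin\mathbb{K}$. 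This is sufficient for the proposition as stated and is arguably cleaner for that purpose; what you give up relative to the paper is the stronger conclusion that $H^{1}_{BD}(r_{BD})$ consists of exactly two classes, which the paper's case analysis delivers as a byproduct. One small point of care: the cocycle condition must hold for \emph{all} $\sigma\in Gal(\overline{\mathbb{K}}/\mathbb{K})$, not just for representatives of $Gal(\mathbb{K}(j)/\mathbb{K})$ and $Gal(\overline{\mathbb{K}}/\mathbb{K}(j))$; this is immediate here because the entries of $X_{0}$ lie in $\mathbb{K}(j)$, so $\sigma(X_{0})$ depends only on $\sigma|_{\mathbb{K}(j)}$, but it is worth saying explicitly.
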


\begin{proof}
Assume that $X^{-1}\sigma(X)\in C(SO(2n), r_{BD})$ for all
$\sigma\in \Gal(\overline{\mathbb{K}}/\mathbb{K})$.
By the above lemma, $X^{-1}\sigma(X)=\mathrm{diag}(t_{1},\ldots ,t_{n-1},\pm 1, \pm 1, t_{n-1}^{-1},\ldots ,t_{1}^{-1})$.

On the other hand, since $X^{-1}\sigma(X)$ is diagonal, it follows from Theorem
\ref{orthog} that there exist $Q\in SO(2n,\mathbb{K})$ and a diagonal matrix $D\in SO(2n,\overline{\mathbb{K}})$ such that $X=QD$.
Let us write $D=\mathrm{diag}(s_{1},\ldots ,s_{n}, s_{n}^{-1},\ldots ,s_{1}^{-1})$.
Since $Q\in O(2n,\mathbb{K})$, for any $\sigma\in \Gal(\overline{\mathbb{K}}/\mathbb{K})$, $\sigma(Q)=Q$. We obtain
$X^{-1}\sigma(X)=D^{-1}Q^{-1}Q\sigma(D)=D^{-1}\sigma(D)$, which
is equivalent to the following system:
$s_{i}^{-1}\sigma(s_{i})=t_{i}$, for all $i\leq n-1$ and $s_{n}^{-1}\sigma(s_{n})=\pm 1$.

Assume first that there exists $\sigma$ such that $\sigma(s_{n})=-s_{n}$. Then
$s_{n}\in j\mathbb{K}$. One can check that $X$ is equivalent to
$X_{0}=\mathrm{diag}(1,\ldots ,1,j,j^{-1},1,\ldots ,1)$ which is a non-trivial cocycle.

If  $\sigma(s_{n})=s_{n}$ for all $\sigma \in \Gal(\overline{\mathbb{K}}/\mathbb{K})$, then $s_{n}\in \mathbb{K}$. In this case,
\[D=\mathrm{diag}(s_{1},\ldots ,s_{n-1},1, 1,s_{n-1}^{-1},\ldots ,s_{1}^{-1})\cdot
\mathrm{diag}(1,\ldots ,1,s_{n}, s_{n}^{-1},1,\ldots ,1),\] where the first matrix is in $C(SO(2n),r_{BD})$ and the second
in $SO(2n,\mathbb{K})$. This proves that $X$ is equivalent to the identity cocycle.

\end{proof}

\section{Lie bialgebra structures in Case III and twisted Belavin-Drinfeld cohomologies}

Throughout this and the next sections we consider $GL(n)$ as the gauge group.
Here we analyse the Lie bialgebra structures on $\mathfrak{g}(\mathbb{K})$
for which the corresponding Drinfeld double is isomorphic to
$\mathfrak{g}(\mathbb{K}[j])$, where $j^2=\hbar$. The question is to find those subalgebras
$W$ of $\mathfrak{g}(\mathbb{K}[j])$ satisfying the
following conditions:

(i) $W\oplus \mathfrak{g}(\mathbb{K})=\mathfrak{g}(\mathbb{K}[j])$.

(ii) $W=W^{\perp}$ with respect to the non-degenerate symmetric bilinear form $Q$ given by $$Q(f_1+jf_2, g_1+jg_2)=K(f_1,g_2)+K(f_2,g_1).$$

We will restrict our discussion to $\mathfrak{g}=sl(n)$. We begin with the following remark. The field $\mathbb{K}[j]$ is endowed with a conjugation. For any element $a=f_1+jf_2$, its conjugate is $\overline{a}:=f_1-jf_2$. By the norm of an element $a \in \mathbb{K}[j]$ we will understand the element $a\overline{a} \in \mathbb{K}$.

If $A=A_{1}+jB_{1}$ and $B=A_{2}+jB_{2}$ are two matrices in $sl(n,\mathbb{K}[j])$, then $Q(A,B)=Tr(A_{1}B_{2}+B_{1}A_{2})$, i.\ e., the coefficient of $j$ in $Tr(AB)$.

\begin{lem}\label{emb}
Let $L$ be the subalgebra of $sl(n,\mathbb{K}[j])$ which consists of all matrices $Z=(z_{ij})$ satisfying $z_{ij}=\overline{z}_{n+1-i,n+1-j}$. Then  $L$ and $sl(n,\mathbb{K})$ are isomorphic via a conjugation of $sl(n,\mathbb{K}[j])$.
\end{lem}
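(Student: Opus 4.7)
The plan is to rewrite the defining condition of $L$ as a single matrix identity, reduce the construction of the desired isomorphism to a concrete equation over $\mathbb{K}[j]$, and then solve that equation explicitly. First, let $S$ denote the anti-diagonal matrix, with entries $S_{ij}=\delta_{i,n+1-j}$, so that $S\in GL(n,\mathbb{K})$ and $S^{2}=I$. The defining relation $z_{ij}=\overline{z}_{n+1-i,n+1-j}$ is then equivalent to $Z=S\overline{Z}S$. Writing $\phi(Z):=S\overline{Z}S$, one checks that $\phi$ is a $\sigma$-semilinear involutive ring automorphism of $M_{n}(\mathbb{K}[j])$, where $\sigma:j\mapsto -j$ generates $Gal(\mathbb{K}[j]/\mathbb{K})$. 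Thus $L$ is precisely the $\mathbb{K}$-subspace of $\phi$-fixed elements of $sl(n,\mathbb{K}[j])$; decomposing $Z=\tfrac{1}{2}(Z+\phi(Z))+\tfrac{1}{2}(Z-\phi(Z))$ gives $\dim_{\mathbb{K}}L=n^{2}-1=\dim_{\mathbb{K}}sl(n,\mathbb{K})$.

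Next I would look for $X\in GL(n,\mathbb{K}[j])$ such that $Y\mapsto X^{-1}YX$ sends $sl(n,\mathbb{K})$ into $L$. For $Y\in sl(n,\mathbb{K})$ one has $\overline{Y}=Y$, hence $\phi(Y)=SYS$, and the requirement $\phi(X^{-1}YX)=X^{-1}YX$ simplifies by direct manipulation to $UY\overline{U}=Y$ for all such $Y$, where $U:=XS\overline{X}^{-1}$. A short calculation using $S^{2}=I$ gives $U\overline{U}=I$ automatically, so the condition reduces to $UYU^{-1}=Y$ for every $Y\in sl(n,\mathbb{K})$. Since $sl(n,\mathbb{K})$ generates $M_{n}(\mathbb{K})$ as an associative $\mathbb{K}$-algebra, its centralizer in $M_{n}(\mathbb{K}[j])\cong M_{n}(\mathbb{K})\otimes_{\mathbb{K}}\mathbb{K}[j]$ is $\mathbb{K}[j]\cdot I$, forcing $U=cI$ with $c\overline{c}=1$; equivalently $XS=c\overline{X}$ for some $c\in\mathbb{K}[j]$ of norm one. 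One may take $c=1$.

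It remains to exhibit an $X$ satisfying $XS=\overline{X}$. Set $X:=(1+j)I+(1-j)S$. Using $S^{2}=I$ one computes $XS=(1+j)S+(1-j)I$, which coincides with $\overline{X}=(1-j)I+(1+j)S$. Since $S$ has eigenvalues $\pm 1$, the matrix $X$ has eigenvalues $(1+j)\pm(1-j)\in\{2,2j\}$, both nonzero, so $X\in GL(n,\mathbb{K}[j])$. The map $Y\mapsto X^{-1}YX$ is therefore a $\mathbb{K}$-linear injection of $sl(n,\mathbb{K})$ into $L$, and is a bijection by the dimension count. The main obstacle is the reduction in the second paragraph: converting the family of constraints indexed by $Y$ into the single matrix identity $XS=\overline{X}$ is where the essential insight lies, after which producing $X$ and checking invertibility is routine.
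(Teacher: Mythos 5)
Your proof is correct and follows essentially the same route as the paper: both rewrite the defining condition as $Z=S\overline{Z}S$ and obtain the isomorphism by conjugating with an $X\in GL(n,\mathbb{K}[j])$ satisfying $\overline{X}=XS$. The one substantive addition is that you actually produce such an $X$ explicitly, namely $X=(1+j)I+(1-j)S$, and verify its invertibility, whereas the paper only says ``choose a matrix $X$ with $\overline{X}=XS$'' without exhibiting one; your centralizer argument showing that any suitable conjugation must satisfy $XS=c\overline{X}$ with $c\overline{c}=1$ is extra information not needed for the lemma, and your injectivity-plus-dimension-count replaces the paper's direct verification of both inclusions.
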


\begin{proof}
Assume that $Z=(z_{ij})$ satisfies $z_{ij}=\overline{z}_{n+1-i,n+1-j}$. Then
$Z=S\overline{Z}S$, where $S$ is the matrix with 1 on the second diagonal and zero elsewhere.

Choose a matrix $X\in GL(n,\mathbb{K}[j])$ such that $\overline{X}=XS$. Then
$\overline{XZX^{-1}}=XS\overline{Z}SX^{-1}=XZX^{-1}$ which implies that $XZX^{-1}\in
sl(n,\mathbb{K})$. Conversely, if $A\in sl(n,\mathbb{K})$, then $Z=X^{-1}AX$
satisfies the condition $Z=S\overline{Z}S$.
\end{proof}

From now on we will identify $sl(n,\mathbb{K})$ with $L$. Let us find a complementary subalgebra to $L$ in $sl(n,\mathbb{K}[j])$. Let us denote by $\it{H}$ the Cartan subalgebra of $L$. If we identify
the Cartan subalgebra of $sl(n,\mathbb{K}[j])$ with $\mathbb{K}^{2(n-1)}$,
then $\it{H}$ is a Lagrangian subspace of $\mathbb{K}^{2(n-1)}$. Choose a Lagrangian subspace $H_{0}$ of $\mathbb{K}^{2(n-1)}$ such that $H_{0}$ has trivial intersection with $\it{H}$. Let $N^{+}$ be the algebra of upper triangular matrices of $sl(n,\mathbb{K}[j])$ with zero diagonal. Consider
$W_{0}=H_{0}\oplus N^{+}$. We immediately obtain the following

\begin{lem}
The subalgebra $W_{0}$ as above satisfies conditions (i) and (ii), where $sl(n,\mathbb{K})$ is identified with $L$ as in Lemma \ref{emb}.
\end{lem}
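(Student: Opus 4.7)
The plan is to verify conditions (i) and (ii) by direct computation, exploiting only the reflection symmetry $z_{ij}=\overline{z}_{n+1-i,n+1-j}$ that defines $L$ and the triangular decomposition of $sl(n,\mathbb{K}[j])$. First I would note that $W_0$ is a subalgebra: $H_0$ is abelian because it sits inside the Cartan of $sl(n,\mathbb{K}[j])$, $[H_0,N^+]\subseteq N^+$ since $H_0$ consists of diagonal matrices, and $[N^+,N^+]\subseteq N^+$. Next I would compare $\mathbb{K}$-dimensions: $\dim_{\mathbb{K}} sl(n,\mathbb{K}[j])=2(n^2-1)$, $\dim_{\mathbb{K}} L=n^2-1$, and $\dim_{\mathbb{K}} W_0=(n-1)+n(n-1)=n^2-1$. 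These are consistent with the direct sum in (i) and reduce (i) to showing $W_0\cap L=0$.

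For that intersection, take $Z\in W_0\cap L$. Since $Z\in H_0\oplus N^+$, the strictly lower triangular entries of $Z$ vanish. The involution $(i,j)\mapsto(n+1-i,n+1-j)$ interchanges strictly lower and strictly upper triangular positions, so the defining identity $z_{ij}=\overline{z}_{n+1-i,n+1-j}$ forces the strictly upper triangular entries of $Z$ to vanish as well. Hence $Z$ is diagonal, and its diagonal belongs to $H_0\cap H=0$, the last equality holding by the choice of $H_0$ as a complement of the Cartan $H$ of $L$ inside $\mathbb{K}^{2(n-1)}$.

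For (ii), I use the remark that $Q(A,B)$ equals the coefficient of $j$ in $\mathrm{Tr}(AB)$. Writing $A=h_A+u_A$ and $B=h_B+u_B$ with $h_A,h_B\in H_0$ and $u_A,u_B\in N^+$, the products $h_A u_B$ and $u_A h_B$ are strictly upper triangular and hence traceless, and $u_A u_B$ is strictly upper triangular too, so $\mathrm{Tr}(AB)=\mathrm{Tr}(h_A h_B)$. Since $H_0$ is Lagrangian in $\mathbb{K}^{2(n-1)}$ with respect to the restriction of $Q$ to the Cartan of $sl(n,\mathbb{K}[j])$, the coefficient of $j$ in $\mathrm{Tr}(h_A h_B)$ vanishes. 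Hence $W_0\subseteq W_0^{\perp}$, and because $\dim_{\mathbb{K}} W_0=n^2-1$ is exactly half of $\dim_{\mathbb{K}} sl(n,\mathbb{K}[j])$ with $Q$ non-degenerate, this inclusion is an equality. There is no substantive obstacle here — the argument is bookkeeping with triangular decompositions — the only point requiring care is interpreting \emph{Lagrangian} $H_0$ in the hypothesis as referring to the restriction of $Q$ to the Cartan, without which step (ii) would fail.
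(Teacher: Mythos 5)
Your verification is correct, and it is exactly the ``immediate'' check the paper has in mind: the paper states this lemma with no proof at all, so your dimension count, the observation that the reflection $(i,j)\mapsto(n+1-i,n+1-j)$ swaps the strict triangular parts to force $W_0\cap L\subseteq H_0\cap\mathit{H}=0$, and the reduction of isotropy of $W_0$ to isotropy of $H_0$ in the Cartan supply precisely the omitted bookkeeping. Your closing caveat is also the right reading: ``Lagrangian'' for $H_0$ must refer to the restriction of $Q$ to the Cartan of $sl(n,\mathbb{K}[j])$, which is consistent with the paper's preceding assertion that $\mathit{H}$ itself is Lagrangian in $\mathbb{K}^{2(n-1)}$.
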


\begin{prop}
Any Lie bialgebra structure on $sl(n,\mathbb{K})$ for which the classical double is isomorphic to $sl(n,\mathbb{K}[j])$ is given by an $r$-matrix
which satisfies $CYB(r)=0$ and $r+r^{21}=j\Omega$.
\end{prop}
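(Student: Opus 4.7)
The plan is to construct the $r$-matrix directly from the Manin triple underlying $\bar\delta$, mirroring the analogous strategy for Case II.

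First, I would invoke the Manin triple correspondence (part (iii) of the Corollary in Section 2) to rephrase $\bar\delta$ as the choice of a Lagrangian complement $W$ to $L := sl(n,\mathbb{K})$ inside $D := sl(n,\mathbb{K}[j])$ with respect to $Q$. Fixing a $\mathbb{K}$-basis $\{e_i\}$ of $L$ together with its $Q$-dual basis $\{\epsilon^i\}\subset W$ (so that $Q(e_i,\epsilon^j)=\delta_i^j$), the candidate $r$-matrix is
\[
r := \sum_i e_i\otimes \epsilon^i \;\in\; D\otimes D.
\]

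I would then verify three properties in turn. Firstly, that $\bar\delta(x)=[r,x\otimes 1+1\otimes x]\in L\otimes L$ for every $x\in L$; this is the standard recovery of the cobracket from a Manin triple, using ad-invariance of $Q$ together with isotropy of $W$. Secondly, that $\mathrm{CYB}(r)=0$; this is the classical identity satisfied by canonical Manin triple $r$-matrices, following by direct computation from $L$ and $W$ both being Lie subalgebras, the Jacobi identity on $D$, and invariance of $Q$. Thirdly, that $r+r^{21}=j\Omega$: since the symmetric part equals the $Q$-Casimir of $D$, it is $D$-invariant, and the one-dimensionality of $\mathfrak{g}$-invariants in $\mathfrak{g}(\mathbb{K}[j])^{\otimes 2}$ over $\mathbb{K}[j]$ forces it to be a $\mathbb{K}[j]$-multiple of $\Omega$. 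Its explicit coefficient is read off from $Q(f_1+jf_2,g_1+jg_2)=K(f_1,g_2)+K(f_2,g_1)$: in a $\mathbb{K}$-basis $\{X_\mu\}$ of $L$ with $K$-dual basis $\{X^\mu\}$, the $Q$-dual of $X_\mu$ inside $D$ is $jX^\mu$, so after passage to the $\mathbb{K}[j]$-tensor product the Casimir collapses to (a multiple of) $j\Omega$.

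The main obstacle I anticipate is the third step: careful bookkeeping of tensor-product conventions over $\mathbb{K}$ (where $r$ is naturally defined as a sum over a $\mathbb{K}$-basis) versus over $\mathbb{K}[j]$ (where $j\Omega$ lives), so that the coefficient of $\Omega$ comes out exactly as $j$. Steps one and two are standard Manin triple calculations.
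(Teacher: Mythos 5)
Your proposal is correct in substance, and its first two steps (recovering $\bar\delta$ from $r=\sum_i e_i\otimes\epsilon^i$ and verifying $\mathrm{CYB}(r)=0$) are exactly the standard Manin-triple facts the paper invokes implicitly. Where you genuinely diverge from the paper is in how the coefficient of $\Omega$ is pinned down. The paper first takes the explicit Lagrangian complement $W_0=H_0\oplus N^{+}$ from the preceding lemma, asserts that its $r$-matrix $r_0$ visibly satisfies $r_0+r_0^{21}=j\Omega$, and then, for an arbitrary $W$, writes $r+r^{21}=a\Omega$ by invariance and concludes $a=j$ because $r$ and $r_0$ have the same classical double, hence are twists of one another and so share the same symmetric part. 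You instead compute the symmetric part once and for all as the $Q$-Casimir of $sl(n,\mathbb{K}[j])$, evaluated in a $K$-dual basis. Your route is more self-contained -- no reference complement and no twist argument are needed, and the coefficient is obtained uniformly for every $W$ -- while the paper's route reduces the normalization question to one concrete case and gets uniformity for free from the twist relation. The point you yourself flag as delicate is the only one I would press: the Casimir of the trace form $t(a+bj)=b$ on $\mathbb{K}[j]$ is $1\otimes j+j\otimes 1$, which collapses to $2j$ rather than $j$ after passage to the $\mathbb{K}[j]$-tensor square, so matching the stated identity $r+r^{21}=j\Omega$ exactly requires reconciling the normalizations of $Q$ and $\Omega$ (the paper's later convention $r=j(\mathrm{Ad}_X\otimes\mathrm{Ad}_X)(r_{BD})$ gives $r+r^{21}=j\Omega$ on the nose). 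This is a bookkeeping issue the paper's own unproved claim about $r_0$ faces equally; it is not a gap in your argument.
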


\begin{proof}
Let $W_{0}$ be as in the above lemma.
By choosing two dual bases in $W_{0}$ and $sl(n,\mathbb{K})$
respectively, one can construct the corresponding $r$-matrix $r_{0}$ over $\overline{\mathbb{K}}$. It is easily seen that $r_{0}$ satisfies the system $CYB(r_{0})=0$ and $r_{0}+r_{0}^{21}=j\Omega$.

Let us suppose that $W$ is another subalgebra of $sl(n,\mathbb{K}[j])$
satisfying conditions (i) and (ii). Then the corresponding $r$-matrix over $\overline{\mathbb{K}}$ is obtained by choosing dual bases in $W$ and $sl(n,\mathbb{K})$ respectively. We have $r+r^{21}=a\Omega$ for some $a\in\mathbb{K}[j]$.
On the other hand, the classical double of the Lie bialgebras corresponding to $r$ and $r_{0}$ is the same. This implies that $r$ and $r_{0}$ are
classical twists of each other and therefore $a=j$.
\end{proof}

On the other hand, over $\overline{\mathbb{K}}$, all $r$-matrices are gauge equivalent to the ones from Belavin--Drinfeld list. It follows that there exists a non-skewsymmetric $r$-matrix $r_{BD}$ and $X\in GL(n,\overline{\mathbb{K}})$ such that $r=j(\mathrm{Ad}_{X}\otimes \mathrm{Ad}_{X})(r_{BD})$.

Let us consider the conjugation on $\mathbb{K}[j]$ given by $\overline{a+bj}=a-bj$ and denote by  $\sigma_0$ its arbitrary lift  to $\Gal(\overline{\mathbb{K}}/\mathbb{K})$. We recall, see \cite{Ser}, that $\Gal(\overline{\mathbb{K}}/\mathbb{K})$ is generated by $\Gal(\overline{\mathbb{K}}/\mathbb{K}[j])$ and $\sigma_0$.

Consider an arbitrary $\sigma\in \Gal(\overline{\mathbb{K}}/\mathbb{K})$. Since $\delta$ is a cobracket on $sl(n,\mathbb{K})$, $(\sigma\otimes \sigma)(\delta(a))=\delta(a)$ and $(\sigma\otimes \sigma)(\delta(a))=[\sigma(r),a\otimes 1+1\otimes a]$.

Let us assume that $\sigma \in \Gal(\overline{\mathbb{K}}/\mathbb{K}[j])$. Exactly as in Section \ref{case2}, it follows that
$\sigma(r)=r$ and if $r=(\mathrm{Ad}_{X}\otimes \mathrm{Ad}_{X})(jr_{BD})$ with $X\in GL(n,\overline{\mathbb{K}})$, then $\sigma(X)=XD(\sigma)$.

 By the same arguments as in the proof of Lemma \ref{decomp}, the following result holds.

\begin{lem}\label{newdecomp}
Let $X\in GL(n,\overline{\mathbb{K}})$. Assume that for any $\sigma \in \Gal(\overline{\mathbb{K}}/\mathbb{K}[j])$, $X^{-1}\sigma(X)\in \mathrm{diag}(n,\overline{\mathbb{K}})$. Then there exists $P\in GL(n,\mathbb{K}[j])$ and
$D\in \mathrm{diag}(n,\overline{\mathbb{K}})$ such that $X=PD$.
\end{lem}

Now let us consider the action of $\sigma_{0}\in \Gal(\mathbb{K}[j]/\mathbb{K})$.
Our identities imply that $\sigma_{0}(r)=r+\alpha\Omega$, for some $\alpha\in\overline{\mathbb{K}}$. Let us show that $\alpha=-j$. Indeed, since $r+r^{21}=j\Omega$, we also have $\sigma_{0}(r)+\sigma_{0}(r^{21})=-j\Omega$. Combining these relations with $\sigma_{0}(r)=r+\alpha\Omega$, we get $\alpha=-j$ and therefore $\sigma_{0}(r)=r-j\Omega=-r_{21}$.

Recall now that $r=j(\mathrm{Ad}_{X}\otimes \mathrm{Ad}_{X})(r_{BD})$. It follows that $X\in GL(n,\overline{\mathbb{K}})$ must satisfy the identity
$(\mathrm{Ad}_{X^{-1}\sigma_{0}(X)}\otimes \mathrm{Ad}_{X^{-1}\sigma_{0}(X)})(\sigma(r_{BD}))=r_{BD}^{21}$. Using the same arguments as in the proof of Theorem
\ref{CartanK-2} in Section \ref{case2}, we obtain

\begin{prop}
Any Lie bialgebra structure on $sl(n,\mathbb{K})$ for which the classical double is $sl(n,\mathbb{K}[j])$ is given by an $r$-matrix $r=j(\mathrm{Ad}_{X}\otimes \mathrm{Ad}_{X})(r_{BD})$, where
$r_{BD}$ is a non-skewsymmetric $r$-matrix from the Belavin--Drinfeld list and $X\in GL(n,\overline{\mathbb{K}})$ satisfies \[(\mathrm{Ad}_{X^{-1}\sigma_{0}(X)}\otimes \mathrm{Ad}_{X^{-1}\sigma_{0}(X)})(r_{BD})=r_{BD}^{21}\] and, for $\sigma\in \Gal(\overline{\mathbb{K}}/\mathbb{K}[j])$,
  \[(\mathrm{Ad}_{X^{-1}\sigma(X)}\otimes \mathrm{Ad}_{X^{-1}\sigma(X)})(r_{BD})=r_{BD}.\]
\end{prop}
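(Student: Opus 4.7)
The plan is to collect the two Galois-theoretic identities on the $r$-matrix that have already been derived in the discussion preceding the proposition, translate them into identities for the cocycle $X$, and then in each case invoke a Theorem \ref{CartanK}-type argument to strip off the extra $\sigma$ that appears on $r_{BD}$. Concretely, from the previous proposition we have $r = j(\mathrm{Ad}_X \otimes \mathrm{Ad}_X)(r_{BD})$ with $r_{BD}$ from the Belavin--Drinfeld list and $X \in GL(n,\overline{\mathbb{K}})$. The preceding text has established (a) $\sigma(r)=r$ for every $\sigma \in Gal(\overline{\mathbb{K}}/\mathbb{K}[j])$, because such $\sigma$ fixes $j$ and the argument of Section 4 (applied with $\mathbb{K}[j]$ in place of $\mathbb{K}$) adapts verbatim, and (b) $\sigma_2(r)=-r^{21}$, where $\sigma_2$ is the nontrivial element of $Gal(\mathbb{K}[j]/\mathbb{K})$.

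For condition (a), applying $\sigma$ to $r = j(\mathrm{Ad}_X \otimes \mathrm{Ad}_X)(r_{BD})$ and using $\sigma(j)=j$ gives $(\mathrm{Ad}_{X^{-1}\sigma(X)} \otimes \mathrm{Ad}_{X^{-1}\sigma(X)})(\sigma(r_{BD})) = r_{BD}$. Since $\mathbb{K}[j] \cong \mathbb{C}((j))$ is a formal Laurent series field and $Gal(\overline{\mathbb{K}}/\mathbb{K}[j]) \cong \hat{\mathbb{Z}}$, Theorem \ref{CartanK} applies without modification and yields both $\sigma(r_{BD}) = r_{BD}$ and $X^{-1}\sigma(X) \in C(r_{BD})$, giving the second condition of the proposition.

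For condition (b), apply $\sigma_2$ to $r = j(\mathrm{Ad}_X \otimes \mathrm{Ad}_X)(r_{BD})$, using $\sigma_2(j)=-j$ and $\sigma_2(r)=-r^{21}$, to obtain
\[
(\mathrm{Ad}_{X^{-1}\sigma_2(X)} \otimes \mathrm{Ad}_{X^{-1}\sigma_2(X)})(\sigma_2(r_{BD})) = r_{BD}^{21}.
\]
To reach the form stated in the proposition, the essential task is to show that $\sigma_2(r_{BD}) = r_{BD}$; substituting this into the identity above then yields the first condition.

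This last step is the main obstacle, and it requires a twisted analogue of the eigenspace analysis in the proof of Theorem \ref{CartanK}. Set $Y := X^{-1}\sigma_2(X)$ and write $r_{BD}=s+r_0$ as in that proof. Using $F((\mathrm{Ad}(Y)\otimes \mathrm{Ad}(Y))(r')) = \mathrm{Ad}(Y)\circ F(r') \circ \mathrm{Ad}(Y^{-1})$ on the identity above, one checks that $F(r_{BD}^{21})$ has its $0$- and $1$-generalized eigenspaces interchanged relative to $F(r_{BD})$, since replacing $\sum e_\alpha \otimes e_{-\alpha}$ by $\sum e_{-\alpha}\otimes e_\alpha$ sends $\mathfrak{n}_\pm$ to opposite eigenspaces. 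Because the root-part coefficients of $r_{BD}$ are integers and hence fixed by $\sigma_2$, the operator $F(\sigma_2(r_{BD}))$ has the same eigenspace pattern as $F(r_{BD})$. Consequently $\mathrm{Ad}(Y)$ must swap the Borel subalgebras, i.e.\ $\mathrm{Ad}(Y)(\mathfrak{b}_\pm)=\mathfrak{b}_\mp$, so $Y$ is a lift of the longest Weyl element times an element of the Cartan torus. Projecting the twisted identity onto $\mathfrak{h}\otimes \mathfrak{h}$ and using that the root-part identity is then automatic leaves $\sigma_2(r_0) = r_0$, and hence $\sigma_2(r_{BD}) = r_{BD}$. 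Combining this with the two translated Galois identities gives exactly the two conditions on $X$ in the statement.
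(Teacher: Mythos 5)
Your overall route is the same as the paper's: the paper derives the two Galois identities $\sigma(r)=r$ for $\sigma\in Gal(\overline{\mathbb{K}}/\mathbb{K}[j])$ and $\sigma_{2}(r)=r-j\Omega=-r^{21}$ in the text preceding the proposition, translates them into the two conditions on $X$, and then simply invokes ``the same arguments as in the proof of Theorem \ref{CartanK}'' to strip the Galois action off $r_{BD}$. Your handling of the subgroup $Gal(\overline{\mathbb{K}}/\mathbb{K}[j])$ is correct and matches the paper, and you have correctly identified that the real content lies in passing from $(\mathrm{Ad}_{Y}\otimes \mathrm{Ad}_{Y})(\sigma_{2}(r_{BD}))=r_{BD}^{21}$ (with $Y=X^{-1}\sigma_{2}(X)$) to the same identity with $\sigma_{2}(r_{BD})$ replaced by $r_{BD}$.

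However, your justification of that last step has a genuine gap. In the untwisted proof of Theorem \ref{CartanK} the conclusion $\sigma(r_{0})=r_{0}$ is extracted by comparing Cartan components, and this works only because there $\mathrm{Ad}(Y)$ preserves both Borels and hence lies in $\mathrm{Ad}(H)$, so it acts \emph{trivially} on $\mathfrak{h}^{\otimes 2}$. In your twisted situation you have (correctly) shown that $\mathrm{Ad}(Y)$ \emph{interchanges} $\mathfrak{b}_{+}$ and $\mathfrak{b}_{-}$, so $Y$ represents the longest Weyl element $w_{0}$ times a torus element and acts on $\mathfrak{h}$ by $w_{0}\neq \mathrm{id}$. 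Projecting $(\mathrm{Ad}_{Y}\otimes \mathrm{Ad}_{Y})(s+\sigma_{2}(r_{0}))=s^{21}+r_{0}^{21}$ onto $\mathfrak{h}\otimes\mathfrak{h}$ therefore yields $(w_{0}\otimes w_{0})(\sigma_{2}(r_{0}))=r_{0}^{21}$, not $\sigma_{2}(r_{0})=r_{0}$. Using $r_{0}+r_{0}^{21}=\Omega_{0}$ and the $W$-invariance of $\Omega_{0}$, this is equivalent to $\sigma_{2}(r_{0})=\Omega_{0}-(w_{0}\otimes w_{0})(r_{0})$, which reduces to $\sigma_{2}(r_{0})=r_{0}$ only when $r_{0}$ has the extra symmetry $(w_{0}\otimes w_{0})(r_{0})=r_{0}^{21}$ (true for $r_{DJ}$, where $r_{0}=\tfrac12\Omega_{0}$, but not automatic for a general continuous parameter). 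So the sentence ``projecting \dots leaves $\sigma_{2}(r_{0})=r_{0}$'' is exactly the point where the twisted case departs from Theorem \ref{CartanK}, and as written the step fails; an additional argument (or a normalization of the pair $(X,r_{BD})$) is needed to close it.
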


From now on we will assume that $r_{BD}$ is defined over $\mathbb{K}$ (i.e. its Cartan part $r_0$ is defined over $\mathbb{K}$).

\begin{defn}
Let $r_{BD}$ be a non-skewsymmetric $r$-matrix from the Belavin--Drinfeld list.
We call $X\in G(\overline{\mathbb{K}})$ a \emph{Belavin--Drinfeld twisted cocycle} associated to $r_{BD}$ if $(\mathrm{Ad}_{X^{-1}\sigma_{0}(X)}\otimes \mathrm{Ad}_{X^{-1}\sigma_{0}(X)})(r_{BD})=r_{BD}^{21}$ and for any $\sigma \in \Gal(\overline{\mathbb{K}}/\mathbb{K}[j])$, $(\mathrm{Ad}_{X^{-1}\sigma(X)}\otimes \mathrm{Ad}_{X^{-1}\sigma(X)})(r_{BD})=r_{BD}$.

\end{defn}

The set of Belavin--Drinfeld twisted cocycle associated to $r_{BD}$
will be denoted by $\overline{Z}(G, r_{BD})$.

Now, let us restrict ourselves to the case $r_{BD}=r_{DJ}$. In order to continue our investigation, let us prove the following
\begin{lem}\label{lem_S}
Let $S$ be the matrix with 1 on the second diagonal and zero elsewhere. Then
$$r_{DJ}^{21}=(\mathrm{Ad}_{S}\otimes \mathrm{Ad}_{S})r_{DJ}.$$
\end{lem}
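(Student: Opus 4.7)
The plan is to split $r_{DJ}$ into its Cartan and nilpotent parts and check the identity on each piece separately. Since the Drinfeld--Jimbo matrix corresponds to the empty admissible triple, we have
\[
r_{DJ} \;=\; r_{0} \;+\; \sum_{i<j} e_{ij}\otimes e_{ji},
\]
where $r_{0}=\tfrac12\Omega_{0}$ is the symmetric solution of $r_{0}+r_{0}^{21}=\Omega_{0}$. Thus $r_{DJ}^{21}=r_{0}+\sum_{i<j} e_{ji}\otimes e_{ij}$, and it suffices to show that $\mathrm{Ad}_{S}\otimes\mathrm{Ad}_{S}$ preserves the Cartan part and converts the off-diagonal sum into its opposite.

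For the nilpotent part, the key calculation is the action of $\mathrm{Ad}_{S}$ on elementary matrices. Since $S^{2}=I$, a direct computation gives $\mathrm{Ad}_{S}(e_{ij})=Se_{ij}S=e_{n+1-i,\,n+1-j}$. Consequently
\[
(\mathrm{Ad}_{S}\otimes\mathrm{Ad}_{S})\Bigl(\sum_{i<j} e_{ij}\otimes e_{ji}\Bigr) \;=\; \sum_{i<j} e_{n+1-i,\,n+1-j}\otimes e_{n+1-j,\,n+1-i},
\]
and the change of variables $i'=n+1-j$, $j'=n+1-i$ (which still satisfies $i'<j'$) turns this into $\sum_{i'<j'} e_{j'i'}\otimes e_{i'j'}$, exactly the required flip.

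For the Cartan part, I would observe that $\mathrm{Ad}_{S}$ sends $\mathrm{diag}(a_{1},\ldots,a_{n})$ to $\mathrm{diag}(a_{n},\ldots,a_{1})$; this is the longest Weyl-group element $w_{0}$, acting on $\mathfrak{h}$ by an isometry of (the restriction of) the Killing form. Hence $\Omega_{0}$ is invariant under $\mathrm{Ad}_{S}\otimes\mathrm{Ad}_{S}$, and the symmetric choice $r_{0}=\tfrac12\Omega_{0}$ gives simultaneously $(\mathrm{Ad}_{S}\otimes\mathrm{Ad}_{S})(r_{0})=r_{0}$ and $r_{0}^{21}=r_{0}$. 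Combining this with the nilpotent calculation yields the identity.

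The only subtle point I expect is the treatment of $r_{0}$: the system $r_{0}+r_{0}^{21}=\Omega_{0}$ does not pin down $r_{0}$ uniquely, and the lemma implicitly fixes the canonical $\mathrm{Ad}_{S}$-invariant choice $r_{0}=\tfrac12\Omega_{0}$. Any antisymmetric correction $\phi\in\mathfrak{h}\wedge\mathfrak{h}$ would have to satisfy $(\mathrm{Ad}_{S}\otimes\mathrm{Ad}_{S})\phi=-\phi$, which for the canonical Drinfeld--Jimbo normalization is arranged by taking $\phi=0$. Once this choice is made explicit, the lemma reduces to the two transparent computations above.
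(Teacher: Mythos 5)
Your proposal is correct and follows essentially the same route as the paper: the computation $\mathrm{Ad}_{S}(e_{ij})=e_{n+1-i,\,n+1-j}$ handles the nilpotent part, and the $\mathrm{Ad}_{S}\otimes\mathrm{Ad}_{S}$-invariance together with the symmetry of $\tfrac12\Omega_{0}$ handles the Cartan part, exactly as in the paper's proof. Your added remark that the identity requires the canonical symmetric choice $r_{0}=\tfrac12\Omega_{0}$ is a useful clarification but does not change the argument.
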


\begin{proof}
We recall that $r_{DJ}$ is given by the following formula:
$$r_{DJ}=\sum_{\alpha>0}e_{\alpha}\otimes e_{-\alpha}+\frac{1}{2}\Omega_{0}$$ where $\Omega_{0}$ is the Cartan part of $\Omega$.

First note that $(\mathrm{Ad}_{S}\otimes \mathrm{Ad}_{S})(e_{ij}\otimes e_{ji})=e_{n+1-i,n+1-j}\otimes e_{n+1-j,n+1-i}$, which is a term in $r_{DJ}^{21}$, if $i>j$ (here $e_{ij}$ is a matrix with $1$ on the $(i,j)$ position and zero elsewhere).
On the other hand, since $\Omega_{0}$ is the Cartan part of the invariant element $\Omega$, we get $(\mathrm{Ad}_{S}\otimes \mathrm{Ad}_{S})\Omega_{0}=\Omega_{0}$.
This could also be proved by using the following: $\Omega_{0}=n\sum_{i=1}^{n}e_{ii}\otimes e_{ii}-I\otimes I$, where $I$ denotes the identity matrix of $GL(n,\mathbb{K})$. Then the identity $r_{DJ}^{21}=(\mathrm{Ad}_{S}\otimes \mathrm{Ad}_{S})r_{DJ}$ holds.

\end{proof}
\begin{defn}
By $J$ we denote the matrix with elements
$a_{kk}=1$, for $k\leq m$, $a_{kk}=-j$ for $k\geq m+1$,
$a_{k,n-k+1}=1$, for  $k\leq m$ and $a_{k,n-k+1}=j$ for $k\geq m+1$.
\end{defn}

\begin{lem}
$\overline{Z}(GL(n), r_{DJ})$ is non-empty.
\end{lem}
\begin{proof}
 Indeed, $\sigma_{0}(J)=JS$, $J \in GL(n, \mathbb{K}[j])$.
\end{proof}

\begin{cor}\label{corXP}
Let $X$ be a Belavin--Drinfeld twisted cocycle associated to $r_{DJ}$.
Then $X=PD$, where $P\in GL(n,\mathbb{K}[j])$ and $D\in \mathrm{diag}(n,\overline{\mathbb{K}})$. Moreover, $\sigma_{0}(P)=PSD_{1}$, where
$D_{1}\in \mathrm{diag}(n,\mathbb{K}[j])$.
\end{cor}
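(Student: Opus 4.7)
The plan is to extract the two conclusions from the two kinds of cocycle conditions imposed on $X$ separately. The untwisted condition, demanded for $\sigma\in Gal(\overline{\mathbb{K}}/\mathbb{K}[j])$, says exactly that $X^{-1}\sigma(X)\in C(r_{DJ})=\mathrm{diag}(n,\overline{\mathbb{K}})$. This is precisely the hypothesis of Lemma \ref{newdecomp} applied over $\mathbb{K}[j]$, so we immediately obtain a decomposition $X=PD$ with $P\in GL(n,\mathbb{K}[j])$ and $D\in\mathrm{diag}(n,\overline{\mathbb{K}})$. This is the first assertion of the corollary.

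Next I would use the twisted condition $(\mathrm{Ad}_{X^{-1}\sigma_{2}(X)}\otimes \mathrm{Ad}_{X^{-1}\sigma_{2}(X)})(r_{DJ})=r_{DJ}^{21}$. By Lemma \ref{lem_S} the right-hand side equals $(\mathrm{Ad}_{S}\otimes \mathrm{Ad}_{S})(r_{DJ})$, so the element $S^{-1}X^{-1}\sigma_{2}(X)$ lies in the centralizer $C(r_{DJ})=\mathrm{diag}(n,\overline{\mathbb{K}})$. Equivalently, there exists a diagonal matrix $E\in\mathrm{diag}(n,\overline{\mathbb{K}})$ with $X^{-1}\sigma_{2}(X)=SE$.

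Substituting $X=PD$ gives $D^{-1}P^{-1}\sigma_{2}(P)\sigma_{2}(D)=SE$, hence
\[
P^{-1}\sigma_{2}(P)=D\,S\,E\,\sigma_{2}(D)^{-1}.
\]
Here I would use the elementary identity $D\,S=S\cdot\widetilde{D}$, where $\widetilde{D}$ is $D$ with its diagonal entries reversed; this reflects the fact that conjugation by $S$ permutes the diagonal. Consequently $P^{-1}\sigma_{2}(P)=S\cdot D_{1}$ with $D_{1}:=\widetilde{D}\,E\,\sigma_{2}(D)^{-1}\in\mathrm{diag}(n,\overline{\mathbb{K}})$, which is the relation $\sigma_{2}(P)=PSD_{1}$ claimed.

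It remains to upgrade $D_{1}$ from $\overline{\mathbb{K}}$ to $\mathbb{K}[j]$. Since $P\in GL(n,\mathbb{K}[j])$, also $\sigma_{2}(P)\in GL(n,\mathbb{K}[j])$, and $S\in GL(n,\mathbb{K})\subset GL(n,\mathbb{K}[j])$; therefore $D_{1}=S^{-1}P^{-1}\sigma_{2}(P)$ is a diagonal matrix whose entries a priori lie in $\mathbb{K}[j]$, giving $D_{1}\in\mathrm{diag}(n,\mathbb{K}[j])$ as required. The only place where care is needed is the book-keeping for the conjugation rule $DS=S\widetilde{D}$, which is the one mildly non-trivial step; once this is in place, the proof is a direct application of the already proved Lemmas \ref{newdecomp} and \ref{lem_S}.
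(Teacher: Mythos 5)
Your argument is correct and follows essentially the same route as the paper: Lemma \ref{newdecomp} gives $X=PD$ from the untwisted condition, and Lemma \ref{lem_S} turns the twisted condition into $X^{-1}\sigma_{2}(X)=SD_{2}$, after which $D_{1}=S^{-1}DSD_{2}\,\sigma_{2}(D)^{-1}=S^{-1}P^{-1}\sigma_{2}(P)$ is diagonal with entries in $\mathbb{K}[j]$. Your explicit remark that $D_{1}=S^{-1}P^{-1}\sigma_{2}(P)$ forces its entries into $\mathbb{K}[j]$ supplies a justification the paper leaves implicit, but the proof is otherwise identical.
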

\begin{proof}
Since $X$ is a twisted cocycle, for any $\sigma \in \Gal(\overline{\mathbb{K}}/\mathbb{K}[j])$, $X^{-1}\sigma(X)\in C(GL(n),r_{DJ})$. Recall that $C(GL(n), r_{DJ})=\mathrm{diag}(n,\overline{\mathbb{K}})$. By Lemma \ref{newdecomp}, we have $X=PD$, where $P\in GL(n,\mathbb{K}[j])$ and $D\in \mathrm{diag}(n,\overline{\mathbb{K}})$. Lemma \ref{lem_S} implies that $S^{-1}X^{-1}\sigma_{0}(X)=:D_{2}\in \mathrm{diag}(n,\overline{\mathbb{K}})$. Since $X=PD$, $S^{-1}D^{-1}P^{-1}\sigma_{0}(P)\sigma_{0}(D)=D_{2}$. Hence $P^{-1}\sigma_{0}(P)=DSD_{0}\sigma_{0}(D^{-1})$.

Let $D_{1}:=S^{-1}DSD_{2}\sigma_{0}(D^{-1})\in \mathrm{diag}(n,\overline{\mathbb{K}})$. Then $\sigma_{0}(P)=PSD_{1}$ and $D_{1}\in \mathrm{diag}(n,\mathbb{K}[j])$.
\end{proof}

\begin{defn}
Let $X_{1}$ and $X_{2}$ be two Belavin--Drinfeld twisted cocycles associated to
 $r_{BD}$. We say that they are \emph{equivalent} if
there exist $Q\in GL(n,\mathbb{K})$ and $D\in \mathrm{diag}(n,\overline{\mathbb{K}})$ such that $X_{1}=QX_{2}D$.
\end{defn}

\begin{rem}\label{remXP}
Assume that $X$ is a twisted cocycle associated to $r_{DJ}$. By
Corollary \ref{corXP}, $X=PD$ and is equivalent to the twisted cocycle
$P\in GL(n,\mathbb{K}[j])$.
\end{rem}

\begin{defn}
Let $\overline{H}_{BD}^{1}(GL(n), r_{BD})$ denote the set of equivalence classes of twisted cocycles associated to $r_{BD}$. We call this set the \emph{Belavin--Drinfeld twisted cohomology} associated to the $r$-matrix $r_{BD}$.
\end{defn}
\begin{rem}
If $X_{1}$ and $X_{2}$ are equivalent, then the corresponding $r$-matrices
$r_{1}=j(\mathrm{Ad}_{X_{1}}\otimes \mathrm{Ad}_{X_{1}})(r_{DJ})$ and $r_{2}=j(\mathrm{Ad}_{X_{2}}\otimes \mathrm{Ad}_{X_{2}})(r_{DJ})$ are gauge equivalent via $Q\in GL(n,\mathbb{K})$.
\end{rem}

\begin{prop}
There is a one-to-one correspondence between
$\overline{H}_{BD}^{1}(GL(n), r_{BD})$ and
gauge equivalence classes of Lie bialgebra structures on $sl(n,\mathbb{K})$
with classical double $sl(n,\mathbb{K}[j])$
and $\overline{\mathbb{K}}$-isomorphic to $\delta(r_{BD})$.
\end{prop}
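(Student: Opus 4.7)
The plan is to exhibit the bijection concretely: send a twisted cocycle $X \in \overline{Z}(r_{DJ})$ to the gauge equivalence class of the cobracket $\delta_{r_X}$ determined by the $r$-matrix $r_X := j(\mathrm{Ad}_X \otimes \mathrm{Ad}_X)(r_{DJ})$. This is the twisted analogue of the correspondence proved earlier for $H^1_{BD}(r_{BD})$ in Case II, and the same three steps---well-definedness, surjectivity, and injectivity---apply.

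For well-definedness, suppose $X_1 \sim X_2$ in $\overline{Z}(r_{DJ})$, meaning $X_1 = Q X_2 D$ with $Q \in GL(n,\mathbb{K})$ and $D \in C(r_{DJ}) = \mathrm{diag}(n,\overline{\mathbb{K}})$. Then $(\mathrm{Ad}_{X_1}\otimes \mathrm{Ad}_{X_1})(r_{DJ}) = (\mathrm{Ad}_{QX_2}\otimes \mathrm{Ad}_{QX_2})(r_{DJ})$, so $r_{X_1}$ and $r_{X_2}$ are gauge equivalent via $Q \in GL(n,\mathbb{K})$, as already observed in the remark just preceding the statement. For surjectivity, the proposition characterising Lie bialgebra structures on $sl(n,\mathbb{K})$ with classical double $sl(n,\mathbb{K}[j])$ shows that any such $\delta$ which is isomorphic over $\overline{\mathbb{K}}$ to $\delta(r_{DJ})$ has the form $\delta_{r_X}$ for some $X \in GL(n,\overline{\mathbb{K}})$ automatically satisfying both cocycle conditions, hence lying in $\overline{Z}(r_{DJ})$.

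For injectivity, suppose $X_1, X_2 \in \overline{Z}(r_{DJ})$ induce Lie bialgebra structures on $sl(n,\mathbb{K})$ that are gauge equivalent via some $Q \in GL(n,\mathbb{K})$. Since both $r_{X_1}$ and $(\mathrm{Ad}_Q \otimes \mathrm{Ad}_Q)(r_{X_2})$ have the same symmetric part $j\Omega$ and induce the same cobracket, they must be equal as tensors. Consequently $(QX_2)^{-1}X_1$ lies in $C(r_{DJ}) = \mathrm{diag}(n,\overline{\mathbb{K}})$, so $X_1 = Q X_2 D$ for some diagonal $D$, which is precisely the equivalence relation in $\overline{Z}(r_{DJ})$. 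The point requiring most care is verifying that $Q \in GL(n,\mathbb{K})$ rather than merely in $GL(n,\mathbb{K}[j])$, but this is guaranteed by the very definition of gauge equivalence of Lie bialgebras on $sl(n,\mathbb{K})$, which takes place over the base field $\mathbb{K}$.
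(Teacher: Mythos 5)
Your proof is correct and follows exactly the route the paper intends: the paper leaves this proposition as an immediate consequence of the preceding remark (equivalent twisted cocycles give $r$-matrices gauge equivalent via $Q\in GL(n,\mathbb{K})$) and of the earlier proposition characterising all such Lie bialgebra structures as $r=j(\mathrm{Ad}_{X}\otimes \mathrm{Ad}_{X})(r_{DJ})$ with $X\in\overline{Z}(r_{DJ})$, which are precisely your well-definedness and surjectivity steps. Your injectivity argument (equality of symmetric parts forces the two $r$-matrices to coincide, hence $(QX_2)^{-1}X_1\in C(r_{DJ})$) is the standard completion and is sound.
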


\begin{prop}\label{case DJ}
For $\mathfrak{g}=sl(n)$, the Belavin--Drinfeld twisted cohomology
$\overline{H}_{BD}^{1}(GL(n), r_{DJ})$ is non-empty and consists of one element.
\end{prop}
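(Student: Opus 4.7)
My plan is to verify the two assertions (non-emptiness and uniqueness) in turn. Non-emptiness has essentially been recorded in the remark just before Corollary \ref{corXP}: any $X_{0}\in GL(n,\mathbb{K}[j])$ satisfying $\sigma_{2}(X_{0})=X_{0}S$ belongs to $\overline{Z}(r_{DJ})$, so I fix such an $X_{0}$ as a base point. By Corollary \ref{corXP} combined with Remark \ref{remXP}, any twisted cocycle $X$ is equivalent to some $P\in GL(n,\mathbb{K}[j])$ satisfying $\sigma_{2}(P)=PSD_{1}$ with $D_{1}\in\mathrm{diag}(n,\mathbb{K}[j])$; the task therefore reduces to proving $P\sim X_{0}$ for every such $P$.

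The strategy is to find a diagonal $D_{0}\in\mathrm{diag}(n,\overline{\mathbb{K}})$ so that $Q:=PD_{0}^{-1}X_{0}^{-1}$ lies in $GL(n,\mathbb{K})$; this is precisely the equivalence $P=QX_{0}D_{0}$. Both $P$ and $X_{0}$ are already fixed by every $\sigma\in Gal(\overline{\mathbb{K}}/\mathbb{K}[j])$, so invariance of $Q$ under that subgroup forces $D_{0}\in\mathrm{diag}(n,\mathbb{K}[j])$. Writing $D_{0}^{-1}=\mathrm{diag}(a_{1},\dots,a_{n})$ with $a_{i}\in\mathbb{K}[j]^{\times}$ and substituting $\sigma_{2}(X_{0})=X_{0}S$ and $\sigma_{2}(P)=PSD_{1}$, the remaining condition $\sigma_{2}(Q)=Q$ collapses to the diagonal system
\[
a_{i}=d_{1,n+1-i}\,\overline{a_{n+1-i}},\qquad i=1,\dots,n.
\]
A key preliminary is that $\sigma_{2}^{2}=\mathrm{id}$ applied to $\sigma_{2}(P)=PSD_{1}$ forces $\sigma_{2}(D_{1})=SD_{1}^{-1}S$, i.e.\ $\overline{d_{1,i}}=d_{1,n+1-i}^{-1}$, which makes the $i$-th and $(n+1-i)$-th equations of the system automatically equivalent. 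Thus for each pair $\{i,n+1-i\}$ with $i<(n+1)/2$ one can pick $a_{i}\in\mathbb{K}[j]^{\times}$ freely and define $a_{n+1-i}:=d_{1,i}\overline{a_{i}}$.

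The main obstacle is the middle-index equation, present only when $n$ is odd and $i=m:=(n+1)/2$: one must produce a nonzero $a_{m}\in\mathbb{K}[j]$ satisfying $a_{m}=d_{1,m}\overline{a_{m}}$, given only the norm constraint $d_{1,m}\overline{d_{1,m}}=1$ forced by the compatibility above. Writing $d_{1,m}=u+jv$ with $u,v\in\mathbb{K}$, this constraint reads $u^{2}-\hbar v^{2}=1$, and looking for $a_{m}=A+jB$ leads to a linear system in $A,B\in\mathbb{K}$ whose consistency is guaranteed precisely by that identity; treating the generic case $v\neq 0$ and the degenerate cases $v=0$, $u=\pm1$ separately produces a nonzero solution in each. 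Once $D_{0}$ is assembled, $Q$ is fixed by the entire Galois group and is invertible as a product of invertible matrices, so $Q\in GL(n,\mathbb{K})$, giving $P\sim X_{0}$ and completing the argument.
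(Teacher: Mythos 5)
Your argument is correct, and it takes a genuinely different route from the paper's. The paper proves the statement by fixing an explicit $X_{0}$ and carrying out the matrix computation entry-by-entry for $n=2$ and $n=3$ (under genericity assumptions such as $cd\neq 0$ and $a_{21}a_{22}a_{23}\neq 0$), then asserting that the general case follows "by a simple induction" from an explicit $X_{0}$ for arbitrary $n$. You instead give a uniform argument for all $n$: after the common reduction to $P\in GL(n,\mathbb{K}[j])$ with $\sigma_{2}(P)=PSD_{1}$, you conjugate the ansatz $P=QX_{0}D_{0}$ into the diagonal system $a_{i}=d_{1,n+1-i}\overline{a_{n+1-i}}$, use $\sigma_{2}^{2}=\mathrm{id}$ to get the compatibility $\overline{d_{1,i}}=d_{1,n+1-i}^{-1}$ which pairs the equations for $i$ and $n+1-i$, and reduce the whole problem to the single norm-one equation $a_{m}=d_{1,m}\overline{a_{m}}$ at the middle index when $n$ is odd --- which is exactly Hilbert's Theorem 90 for the quadratic extension $\mathbb{K}[j]/\mathbb{K}$, and which your explicit computation with $u^{2}-\hbar v^{2}=1$ solves by hand. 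What your approach buys is a complete, basis-independent proof with no genericity hypotheses and no unproven induction; what the paper's approach buys is an explicit representative cocycle and explicit gauge transformations in low rank (used later in the $sl(2)$ example). The only cosmetic point: you phrase $D_{0}\in\mathrm{diag}(n,\mathbb{K}[j])$ as being "forced," whereas it is really a choice you are free to make (and it is the natural one, since it automatically gives invariance of $Q$ under $Gal(\overline{\mathbb{K}}/\mathbb{K}[j])$); this does not affect correctness.
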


\begin{proof}
Let $X$ be a twisted cocycle associated to $r_{DJ}$. By Remark \ref{remXP},
$X$ is equivalent to a twisted cocycle $P\in GL(n,\mathbb{K}[j])$,
associated to $r_{DJ}$. We may therefore assume from the beginning that
$X\in GL(n,\mathbb{K}[j])$ and it remains to prove that all such cocycles
are equivalent.

Denote $m=n/2$ if $n$ is even or $m=(n+1)/2$ if $n$ is odd.
Denote by $J$ the matrix with elements
$a_{kk}=1$, for $k\leq m$, $a_{kk}=-j$ for $k\geq m+1$,
$a_{k,n-k+1}=1$, for  $k\leq m$ and $a_{k,n-k+1}=j$ for $k\geq m+1$. We will prove that $X$ and $J$ are equivalent,
i.\ e., $X=QJD'$, for some $Q\in GL(n,\mathbb{K})$ and $D'\in \mathrm{diag}(n,\mathbb{K}[j])$. The proof will be done by induction.

For $n=2$, set $J=\left(\begin{array}{cc} 1 & 1 \\
j &-j \end{array}\right)$ and let $X=\left(\begin{array}{cc} a & b \\
c & d \end{array}\right)\in GL(2,\mathbb{K}[j])$
satisfy $\overline{X}=XSD$ with $D=\mathrm{diag}(d_{1},d_{2})\in GL(2,\mathbb{K}[j])$. The identity is equivalent to the following system: $\overline{a}=bd_{1}$,  $\overline{b}=ad_{2}$, $\overline{c}=dd_{1}$,
$\overline{d}=cd_{2}$. Assume that $cd\neq 0$. Let $a/c=a'+b'j$. Then $b/d=a'-b'j$. One can immediately check that $X=QJD'$, where
$Q=\left(\begin{array}{cc} a' & b' \\
1 &0 \end{array}\right)\in GL(2,\mathbb{K})$, $D'=\mathrm{diag}(c,d)\in \mathrm{diag}(2,\mathbb{K}[j])$.

For $n=3$, consider $J=\left(\begin{array}{ccc} 1 & 0 &1\\
0&1&0\\j&0&-j\end{array}\right)$ and let $X=(a_{ij})\in GL(3,\mathbb{K}[j])$ satisfy $\overline{X}=XSD$, with
$D=\mathrm{diag}(d_{1},d_{2},d_{3})\in GL(3,\mathbb{K}[j])$. The identity is equivalent to the following system: $\overline{a_{11}}=d_{1}a_{13}$, $\overline{a_{21}}=d_{1}a_{23}$, $\overline{a_{31}}=d_{1}a_{33}$, $\overline{a_{12}}=d_{2}a_{12}$, $\overline{a_{22}}=d_{2}a_{22}$, $\overline{a_{32}}=d_{2}a_{32}$, $\overline{a_{13}}=d_{3}a_{11}$, $\overline{a_{23}}=d_{3}a_{21}$, $\overline{a_{33}}=d_{3}a_{31}$. Assume that $a_{21}a_{22}a_{23}\neq 0$.

Let  $a_{11}/a_{21}=b_{11}+b_{13}j$ and $a_{31}/a_{21}=b_{31}+b_{33}j$. Then $a_{13}/a_{23}=b_{11}-b_{13}j$ and $a_{33}/a_{23}=b_{31}-b_{33}j$. On the other hand, let $b_{12}:=a_{12}/a_{22}$ and $b_{32}:=a_{32}/a_{22}$. Note that $b_{12}\in \mathbb{K}$, $b_{32}\in \mathbb{K}$. One can immediately check that $X=QJD'$, where
$Q=\left(\begin{array}{ccc} b_{11} & b_{12} &b_{13}\\
1&1&0\\ b_{31} & b_{32} & b_{33}\end{array}\right)\in GL(3,\mathbb{K})$,
$D'=\mathrm{diag}(a_{21},a_{22},a_{23})\in \mathrm{diag}(3,\mathbb{K}[j])$.

For $n>3$, we proceed by induction. Let us denote the constructed above $J\in GL(n,\mathbb{K}[j])$ by $J_n$. We are going to prove that if $X\in GL(n,\mathbb{K}[j])$ satisfies $\overline{X}=XSD$, then using elementary row operations with entries from $\mathbb{K}$ and multiplying columns by proper elements from
$\mathbb{K}[j]$ we can bring $X$ to $J_n$.

We will need the following operations on a matrix
$$M=(m_{pq})\in \mathrm{Mat}(n):
$$

1. $u_n (M)=(m_{pq})\in \mathrm{Mat}(n-2),\ p,q=2,3,\ldots ,n-1$;

2. $g_n (M)=(m_{pq})\in \mathrm{Mat}(n+2),$ where $m_{pq}$ are already defined for $p,q=1,2,\ldots n$,
$m_{00} =m_{n+1,n+1}=1$ and the rest $m_{0,a}=m_{a,0}=m_{n+1,a}=m_{a,n+1}=0$.
\vskip0.2cm

It is clear that $u_n (X)$ satisfies the twisted cocycle condition. However, its determinant might vanish. To avoid this complication, we note that columns $2,3,\ldots ,n-1$ of $X$ are linearly independent. Applying
elementary row operations (in fact, they are permutations) we obtain a new cocycle $X_1$, which is
equivalent to $X$ and such that $u_n (X_1)$
is a cocycle in $GL(n-2, \mathbb{K}[j])$.
Then, by induction, there exist $Q_{n-2}\in GL(n-2, \mathbb{K})$ and a diagonal matrix $D_{n-2}$ such that
$$
Q_{n-2}\cdot u_n (X_1)\cdot D_{n-2} =J_{n-2}
$$
Let us consider $X_n=g_{n-2} (Q_{n-2} )\cdot X_1\cdot g_{n-2} (D_{n-2})$. Clearly, $X_n$ is a twisted cocycle
equivalent to $X$ and $u_n (X_n )=J_{n-2}$.

Applying elementary row operations with entries from $\mathbb{K}$ and multiplying by a proper
diagonal matrix, we can obtain a new cocycle
$Y_n=(y_{pg})$ equivalent to $X$ with the following properties:

1. $u_n (Y_n)=J_{n-2}$;

2. $y_{12}=y_{13}=\ldots =y_{1,n-1}=0$ and $y_{n2}=y_{n3}=\ldots =y_{n,n-1}=0$;

3. $y_{11}=y_{1n}=1$, here we use the fact that if $y_{pq}=0$, then $y_{p,n+1-q}=0$.

It follows from the cocycle condition $\overline{Y_n} =Y_n \cdot S\cdot\mathrm{diag}(h_1,\ldots ,h_n)$
that $h_1=h_n=1$ and hence, $y_{n1}=\overline{y_{nn}}$.

Now, we can use the first row to achieve  $y_{n1}=-y_{nn}=j$ and after that, we use the first
and the last rows to get $y_{k1}=0$, $k=2,\ldots ,n-1$. Then  the elements $y_{kn}$, $k=2,\ldots ,n-1$ will vanish automatically.  We have obtained $J_n$ from $X$ and thus, have proved that $X$ is equivalent to $J_n$.

\end{proof}

\begin{ex}

For $\mathfrak{g}=sl(2)$, the Belavin--Drinfeld list of non-skewsymmetric constant $r$-matrices consists of only one class, $r_{DJ}=e\otimes f+\frac{1}{4}h\otimes h$, where $e=e_{12}$, $f=e_{21}$
and $h=e_{11}-e_{22}$. One can easily determine the
corresponding class of gauge equivalent Lie bialgebra structures on $sl(2,\mathbb{K})$
with classical double $sl(2,\mathbb{K}[j])$
and $\overline{\mathbb{K}}$-isomorphic to $\delta(r_{DJ})$.
Indeed, we have seen that the corresponding Lie bialgebra structure
$\delta=dr$, where the $r$-matrix is $r=j(\mathrm{Ad}_{X}\otimes \mathrm{Ad}_{X})r_{DJ}$ and $X$ is a twisted cocycle. On the other hand, according to the
above result, any such $X$ is equivalent to \[J=\left(\begin{array}{cc} 1 & 1 \\
j &-j \end{array}\right).\] Therefore a class representative
is $\delta_{0}=dr_{0}$, where $r_{0}=j(\mathrm{Ad}_{J}\otimes \mathrm{Ad}_{J})r_{DJ}$.
A straightforward computation gives $$r_{0}=\frac{j\Omega}{2}+\frac{1}{4}h\wedge e+\frac{\hbar}{4}f\wedge h. $$
We conclude that any Lie bialgebra structure on $sl(2,\mathbb{K})$
with classical double $sl(2,\mathbb{K}[j])$ is gauge equivalent
to that given by $a\cdot dr_0 ,\ a\in \mathbb{K}$.

\end{ex}

\begin{rem}
In case $sl(2)$, it follows that the Drinfeld--Jimbo $r$-matrix multiplied by $a\in \mathbb{K}$
along with $ar_0$,  $r_{0}=\frac{j\Omega}{2}+\frac{1}{4}h\wedge e+\frac{\hbar}{4}f\wedge h$,
provides all $GL(n)$  non-equivalent Lie bialgebra structures on $sl(2,\mathbb{K})$ of types II and III
and, consequently, two families of non-isomorphic Hopf algebra structures on $U(sl(2,\mathbb{C}))[[\hbar]]$.
Moreover, in some sense
these two structures exhaust all Hopf algebra
structures on  $U(sl(2,\mathbb{C}))[[\hbar]]$ with a non-trivial Drinfeld associator (see also conjectures below).
\end{rem}

\begin{rem} The next step would be to compute the Belavin--Drinfeld twisted cohomology corresponding
to an arbitrary $r$-matrix $r_{BD}$. Unlike untwisted cohomology, it might happen that even $\overline{Z}(G, r_{BD})$
is empty as we will see in next section.

\end{rem}

\section{Twisted cohomologies for $sl(n)$ of Cremmer-Gervais type}
In this section the gauge group $G$  is always $GL(n)$.
We have seen that $\overline{H}^{1}_{BD}(GL(n),r_{DJ})$,
where $r_{DJ}$ is the Drinfeld--Jimbo $r$-matrix, consists of one element. We will now turn our attention to other non-skewsymmetric $r$-matrices
and analyse the corresponding twisted cohomology set. Let us consider an arbitrary admissible triple $(\Gamma_{1},\Gamma_{2},\tau)$, and a tensor $r_{0}\in \mathfrak{h}\otimes \mathfrak{h}$ satisfying $r_{0}+r_{0}^{21}=\Omega_{0}$
and $(\tau(\alpha)\otimes 1+1 \otimes \alpha)(r_{0})=0$ for any $\alpha\in \Gamma_{1}$. We recall that the associated $r$-matrix is
given by the following formula
\[r=r_{0}+\sum_{\alpha>0}e_{\alpha}\otimes e_{-\alpha}+\sum_{\alpha\in (\mathrm{Span} \Gamma_{1})^{+} }\sum_{k\in \mathbb{N}} e_{\alpha}\wedge e_{-\tau^{k}(\alpha)}.\]

Assume now that there exists $X\in \overline{Z}(GL(n),r)$. Then $r$ and $r^{21}$ are gauge equivalent since $(\mathrm{Ad}_{X^{-1}\sigma_{2}(X)}\otimes \mathrm{Ad}_{X^{-1}\sigma_{2}(X)})(r)=r^{21}$.

Let $S\in GL(n,\mathbb{K})$ be the matrix with 1 on the second diagonal and 0 elsewhere. Let us denote by $s$ the automorphism of the Dynkin diagram given by $s(\alpha_{i})=\alpha_{n-i}$
for all $i=1,\ldots ,n-1$. Clearly, $\mathrm{Ad}_{S}(e_{\alpha})=e_{-s(\alpha)}$
and $\mathrm{Ad}_{S}(e_{-\tau^{k}(\alpha)})= e_{s\tau^{k}(\alpha)}$. Thus

$$(\mathrm{Ad}_{S}\otimes \mathrm{Ad}_{S})(r)=(\mathrm{Ad}_{S}\otimes \mathrm{Ad}_{S})(r_{0})+\sum_{\alpha>0}e_{-s(\alpha)}\otimes e_{s(\alpha)}+$$
$$\sum_{\alpha\in (\mathrm{Span} \Gamma_{1})^{+} }\sum_{k\in \mathbb{N}} e_{-s(\alpha)}\wedge e_{s\tau^{k}(\alpha)}.$$

On the other hand, since $r$ and $r^{21}$ are gauge equivalent, $(\mathrm{Ad}_{S}\otimes \mathrm{Ad}_{S})(r)$ and $r^{21}$ must be gauge equivalent as well. The following condition has to be fulfilled for all $k$: $s(\alpha)=\tau^{k}(\beta)$ if $\beta=s\tau^{k}(\alpha)$. We get $s\tau=\tau^{-1}s$,
$s(\Gamma_{1})=\Gamma_{2}$ (and $s(\Gamma_{2})=\Gamma_{1}$). In conclusion we have obtained

\begin{prop}\label{cond_tau}
Let $r$ be a non-skewsymmetric $r$-matrix associated to an admissible triple
$(\Gamma_{1},\Gamma_{2},\tau)$. If $\overline{Z}(GL(n),r)$ is non-empty,
then $s(\Gamma_{1})=\Gamma_{2}$ and $s\tau=\tau^{-1}s$.

\end{prop}

The following two results will prove to be quite useful for the
investigation of the twisted cohomologies for arbitrary non-skewsymmetric $r$-matrices.

\begin{lem}\label{equiv}

Assume $X\in \overline{Z}(GL(n),r)$. Then there exists a twisted cocycle $Y\in GL(n,\mathbb{K}[j])$, associated to $r$, and equivalent to $X$.

\end{lem}

\begin{proof}
We have $X\in GL(n,\overline{\mathbb{K}})$ and for any $\sigma\in \Gal(\overline{\mathbb{K}}/\mathbb{K}[j])$,
$X^{-1}\sigma(X)\in C(GL(n), r)$. On the other hand, the Belavin--Drinfeld cohomology for $sl(n)$ associated to $r$ is trivial. This implies that $X$ is equivalent to the identity, where in the equivalence relation we consider $\mathbb{K}[j]$
instead of $\mathbb{K}$. So there exists $Y\in GL(n,\mathbb{K}[j])$ and $C\in C(GL(n), r)$ such that $X=YC$.
 Since $(\mathrm{Ad}_{X^{-1}\sigma_{0}(X)}\otimes \mathrm{Ad}_{X^{-1}\sigma_{0}(X)})(r)=r^{21}$, $(\mathrm{Ad}_{Y^{-1}\sigma_{0}(Y)}\otimes \mathrm{Ad}_{Y^{-1}\sigma_{0}(Y)})(r)=r^{21}$. Thus $Y$ is also a twisted cocycle associated to $r$.
\end{proof}

Recall that $J\in GL(n,\mathbb{K}[j])$ denotes the matrix
with entries
$a_{kk}=1$ for $k \leq m$, $a_{kk}=-j$ for $k \geq m+1$, $a_{k,n+1-k}=1$
for $k \leq m$, $a_{k,n+1-k}=j$ for $k \geq m+1$, where $m=[\frac{n+1}{2}]$.

\begin{lem}\label{decRJD}
Let $r$ be a non-skewsymmetric $r$-matrix
associated to an admissible triple $(\Gamma_{1},\Gamma_{2},\tau)$
satisfying $s(\Gamma_{1})=\Gamma_{2}$ and $s\tau=\tau^{-1}s$. If
$X\in \overline{Z}(GL(n), r)$, then there exist $R\in GL(n,\mathbb{K})$ and $D\in \mathrm{diag}(n, \overline{\mathbb{K}})$ such that $X=RJD$.

\end{lem}

\begin{proof}
According to Lemma \ref{equiv}, $X=YC$, where $Y\in GL(n, \mathbb{K}[j])$
and $C\in C(GL(n), r)$. Since $(\mathrm{Ad}_{Y^{-1}\sigma_{0}(Y)}\otimes \mathrm{Ad}_{Y^{-1}\sigma_{0}(Y)})(r)=r^{21}$ and $(\mathrm{Ad}_{S}\otimes \mathrm{Ad}_{S})(r)=r^{21}$,
it follows that $S^{-1}Y^{-1}\sigma_{0}(Y)\in C(GL(n), r)$. On the other hand,
by Lemma \ref{lem1}, $C(GL(n), r)\subset \mathrm{diag}(n,\overline{\mathbb{K}})$. We get
$S^{-1}Y^{-1}\sigma_{0}(Y)\in \mathrm{diag}(n,\overline{\mathbb{K}})$. Now Theorem \ref{case DJ} implies that $Y=RJD_{0}$, where $R\in GL(n,\mathbb{K})$ and $D_{0}\in \mathrm{diag}(n,\overline{\mathbb{K}})$. Consequently, $X=RJD_{0}C=RJD$ with
$D=D_{0}C\in \mathrm{diag}(n,\overline{\mathbb{K}})$.

\end{proof}

We will now look for admissible triples which satisfy condition $s\tau=\tau^{-1}s$. Let us consider the
Cremmer--Gervais triple: $\Gamma_{1}=\{\alpha_{1}, \alpha_{2},\ldots ,\alpha_{n-2}\}$,
$\Gamma_{2}=\{\alpha_{2}, \alpha_{3},\ldots ,\alpha_{n-1}\}$ and $\tau(\alpha_{i})=
\alpha_{i+1}$. Clearly, $s\tau=\tau^{-1}s$. Denote by $r_{CG}$ the
Cremmer--Gervais $r$-matrix corresponding to the above triple and whose Cartan part is given by the following expression:
\[r_{0}=\frac{1}{2}\sum_{i=1}^{n}e_{ii}\otimes e_{ii}+\sum_{1\leq i<k\leq n}
\frac{n+2(i-k)}{2n}e_{ii}\otimes e_{kk}.\]

We intend to describe $\overline{H}^{1}_{BD}(GL(n),r_{CG})$. Let us first analyse the case $\mathfrak{g}=sl(3)$. The centralizer $C(GL(n), r_{CG})$ consists of diagonal matrices $\mathrm{diag}(a,b,c)$
such that $b^{2}=ac$. Consider
\[J=\left(\begin{array}{ccc} 1 & 0 & 1\\
0&1&0\\j & 0 & -j\end{array}\right).\]
\begin{lem}
Let $X\in GL(3,\mathbb{K}[j])$. Then $\overline{X}=XSC$, where $C\in C(GL(n), r_{CG})$
if and only if $X=RJ\mathrm{diag}(p,q,r)$, with $R\in GL(3,\mathbb{K})$ and
$prq^{-2}=k\in \mathbb{K}$.
\end{lem}

\begin{proof}
According to Lemma \ref{decRJD},
there exist $R\in GL(3,\mathbb{K})$ and $D=\mathrm{diag}(p,q,r)$,
$p,q,r\in \mathbb{K}[j]$ such that
$X=RJD$.  We get $ \overline{X}=RJS\overline{D}=
RJDD^{-1}S\overline{D}=XS\mathrm{diag}(\overline{p}r^{-1},\overline{q}q^{-1},
\overline{r}p^{-1})$. Let $C=\mathrm{diag}(\overline{p}r^{-1},\overline{q}q^{-1},
\overline{r}p^{-1})$. Then  $C\in C(GL(n), r_{CG})$ if and only if
$\overline{p}\overline{r}(pr)^{-1}=(\overline{q}q^{-1})^{2}$, which is
equivalent to $\overline{prq^{-2}}=prq^{-2}$, i.\ e., $prq^{-2} \in \mathbb{K}$.

\end{proof}

\begin{prop}
$\overline{H}^{1}_{BD}(GL(3),r_{CG})$ consists of one element, namely $J$
can be chosen as a representative.
\end{prop}

\begin{proof}
Let $X\in \overline{Z}(GL(3),r_{CG})$. According to the preceding lemma,
$X=RJ\mathrm{diag}(p,q,r)$, with $R\in GL(3,\mathbb{K})$ and
$prq^{-2}=k\in \mathbb{K}$. We distinguish the following cases:

\textit{Case 1.} Let $k=l^{-2}$, where $l\in \mathbb{K}$. Then we have a
particular solution to the equation $prq^{-2}=l^{-2}$, namely
$p_{0}=r_{0}=1$, $q_{0}=l$. By setting $p=p_{0}p_{1}$, $q=q_{0}q_{1}$,
$r=r_{0}r_{1}$, we see that $\mathrm{diag}(p_{1},q_{1},r_{1})\in C(GL(n), r_{CG})$
and $\mathrm{diag}(p_{0},q_{0},r_{0})=\mathrm{diag}(1,l,1)$
which commutes with $J$.
It follows that $X=RJ\mathrm{diag}(1,l,1)\cdot \mathrm{diag}(p_{1},q_{1},r_{1})$,
equivalently, $X=R_{1}J \mathrm{diag}(p_{1},q_{1},r_{1})$,
where $R_{1}:=R\cdot\mathrm{diag}(1,l,1)$. Consequently, $X$ is equivalent to
$J$.

\textit{Case 2.} Suppose $k$ is not a square of an element of $\mathbb{K}$. In this case, without loss of generality, we can set $l=j$ and $k=\hbar$. We want to prove that $J\cdot \mathrm{diag}(1,j,1)=R'JC'$,
for some $R'\in GL(3,\mathbb{K})$
and some $C'=\mathrm{diag}(x,y,z)$ with $xy^{-2}z=1$.
Equivalently, $J\cdot \mathrm{diag}(x^{-1},jy^{-1},z^{-1})J^{-1}=R'$.
Since $\overline{R'}=R'$, we get $\overline{J}\mathrm{diag}(\overline{x}^{-1},-j\overline{y}^{-1},
\overline{z}^{-1})\overline{J}^{-1}=J\mathrm{diag}(x^{-1},jy^{-1},z^{-1})J^{-1}$.
Thus $\mathrm{diag}(\overline{x}^{-1},-j\overline{y}^{-1},
\overline{z}^{-1})=\mathrm{diag}(x^{-1},jy^{-1},z^{-1})$. We obtained that
$x=\overline{z}$ and $y=kj$, with $k\in \mathbb{K}$. Hence, we have to find
$x$ and $k$ so that $x\overline{x}=k^{2}\hbar$. Clearly, it is sufficient
to find $\alpha\in \mathbb{K}[j]$ with norm $\hbar$ (recall that the norm of an element $a \in \mathbb{K}[j]$ is the element $a\overline{a} \in \mathbb{K}$). The latter is trivial
because we can for instance choose $\alpha=ij$ ($i^{2}=-1$).
Thus the existence of $R'\in GL(3,\mathbb{K})$ and $C'=\mathrm{diag}(x,y,z)$
is proved and therefore we conclude that $X$ is equivalent to
$J$.

\end{proof}

The above result can be generalized to $sl(n)$, $n>3$. Let us first note that
the centralizer $C(GL(n), r_{CG})$ consists of diagonal matrices
$\mathrm{diag}(p_{1},p_{2},\ldots ,p_{n})$ such that $p_{i+1}=p_{2}^{i}p_{1}^{1-i}$
for all $i$. Let $m=[\frac{n+1}{2}]$.

\begin{lem}
Let $X\in GL(n,\mathbb{K}[j])$. Then $\overline{X}=XSC$, where $C\in C(GL(n), r_{CG})$
if and only if $X=RJ\mathrm{diag}(d_{1},\ldots ,d_{n})$, with $R\in GL(n,\mathbb{K})$, $d_{1},\ldots ,d_{m}\in \mathbb{K}[j]$ and $d_{n-i+1}=\overline{d_{i}}r^{i-2}q^{-1}$, for $i\leq m$, where $r,q$ are such that $r^{n-3}=q\overline{q}$.

\end{lem}

\begin{proof}
According to Lemma \ref{decRJD}, there exist $R\in GL(n,\mathbb{K})$, $D=\mathrm{diag}(d_{1},\ldots ,d_{n})$,
$d_{i}\in \mathbb{K}[j]$ such that
$X=RJD$.  We get $\overline{X}=RJS\overline{D}=
RJDD^{-1}S\overline{D}=XS(SD^{-1}S\overline{D})$. On the other hand,
$SD^{-1}S\overline{D}=\mathrm{diag}(\overline{d_{1}}d_{n}^{-1},\overline{d_{2}}d_{n-1}^{-1},\ldots , \overline{d_{n}}d_{1}^{-1})$. Denote $p_{i}=\overline{d_{i}}d_{n+1-i}^{-1}$. Obviously, $p_{n+1-i}=(\overline{p_{i}})^{-1}$. But
$\mathrm{diag}(p_{1},p_{2},\ldots ,p_{n})$ belongs to $C(GL(n), r_{CG})$ if and only if $p_{i+1}=p_{2}^{i}p_{1}^{1-i}$ for all $i$.
It follows that $p_{2}^{n-i}p_{1}^{1+i-n}=(\overline{p_{2}})^{-i+1}
(\overline{p_{1}})^{i-2}$ must be fulfilled for all $i$. For $i=1$ we get $p_{2}^{n-1}=p_{1}^{n-1}\overline{p_{1}}^{-1}$ (note that if this identity holds then the other identities are true for all $i$).  The identity is also equivalent to the following: $p_{1}^{n-3}=p_{2}^{n-2}\overline{p_{2}}$. Set $p_{1}=qr$,
$p_{2}=q$. Then $r^{n-3}=q\overline{q}$. We obtain $d_{n-i+1}=\overline{d_{i}}r^{i-2}q^{-1}$, for all $i\leq m$. Let us note that if $n=2m-1$, we have $d_{m}(\overline{d_{m}})^{-1}=r^{m-2}q^{-1}$. Since the norm of $r^{m-2}q^{-1}$ is 1, this condition is consistent.
\end{proof}

\begin{rem}
It follows from the above lemma that $X=RJ$, where $R\in GL(n,\mathbb{K})$, is a twisted cocycle associated to $r_{CG}$. All such cocycles are equivalent to
$J$.
\end{rem}

\begin{prop}{\label{CG}}
$\overline{H}^{1}_{BD}(GL(n),r_{CG})$ consists of one element, namely $J$
can be chosen as a representative.
\end{prop}

\begin{proof}
Let $X\in \overline{Z}(GL(n), r_{CG})$. According to the previous lemma,
$X=RJ\mathrm{diag}(d_{1},\ldots ,d_{n})$,
where $d_{n-i+1}=\overline{d_{i}}r^{i-2}q^{-1}$,
for $i\leq m$ and $r^{n-3}=q\overline{q}$.
We are looking for $Q\in GL(n,\mathbb{K})$ and $C\in C(GL(n), r_{CG})$
such that $X=QJC$. We get $RJD=QJC$. By taking the conjugate,
we obtain $RJS\overline{D}=
QJS\overline{C}$, which implies
$SD^{-1}S\overline{D}=SC^{-1}S\overline{C}$.
Let $C=\mathrm{diag}(c_{1},\ldots ,c_{n})$ with $c_{i+1}=c_{2}^{i}c_{1}^{1-i}$ for all $i$. Therefore $c_{i}$ must fulfill the following system
$\overline{d_{i}}d_{n+1-i}^{-1}= \overline{c_{i}}c_{n+1-i}^{-1}$.
Equivalently, $\frac{\overline{c_{2}}^{i-1}c_{1}^{n-i-1}}{\overline{c_{1}}^{i-2}c_{2}^{n-i}}=\frac{q}{r^{i-2}}$ must hold for all $i$. By making a change of variables
$c_{1}=xy$, $c_{2}=y$, we immediately obtain
$x\overline{x}=r$ and
$x^{n-3}\overline{y}y^{-1}=q$. The first equation clearly has solution
in $\mathbb{K}[j]$. Since $q/x^{n-3}$ has norm 1,
Hilbert's Theorem 90 implies that there exists a solution $y\in \mathbb{K}[j]$
to the equation $\overline{y}/y=q/x^{n-3}$.
Thus we find a solution to the system which in turn provides us with a matrix
$C\in C(GL(n), r_{CG})$ that satisfies $SD^{-1}S\overline{D}=SC^{-1}S\overline{C}$.
Finally we note that if we let $Q=XC^{-1}J^{-1}$, then $Q\in GL(n,\mathbb{K})$
because of the way $C$ was chosen.

\end{proof}

The Cremmer--Gervais case can be further generalized. We call a triple
$(\Gamma_{1},\Gamma_{2}, \tau)$ \emph{generalized Cremmer--Gervais} if $\Gamma_{1}=\{\alpha_{1},\ldots ,\alpha_{k}\}$.  Without loss of generality, such a triple has one of the forms:

Type 1: $\Gamma_{1}=\{\alpha_{1},\ldots ,\alpha_{k}\}$,
$\Gamma_{2}=\{\alpha_{n-k},\ldots ,\alpha_{n-1}\}$
and $\tau(\alpha_{i})=\alpha_{n-k+i-1}$.

Type 2:  $\Gamma_{1}=\{\alpha_{1},\ldots ,\alpha_{k}\}$,
$\Gamma_{2}=\{\alpha_{n-k},\ldots ,\alpha_{n-1}\}$
and $\tau(\alpha_{i})=\alpha_{n-i}$.

Let us recall that a necessary condition for $\overline{Z}(SL(n),r)$ to be non-empty is that the corresponding admissible triple satisfies
$s(\Gamma_{1})=\Gamma_{2}$ and $s\tau=\tau^{-1}s$, where
$s$ is given by $s(\alpha_{i})=\alpha_{n-i}$
for all $i=1,\ldots ,n-1$. If the triple is generalized Cremmer--Gervais then
this condition is satisfied.

\begin{thm}
Let $r$ be a non-skewsymmetric $r$-matrix corresponding to a generalized Cremmer--Gervais triple $(\Gamma_{1},\Gamma_{2},\tau)$. Then $\overline{H}^{1}_{BD}(GL(n),r)$ consists of one element, the class of $J$.
\end{thm}

\begin{proof}
First let us describe the centralizer $C(GL(n), r)$.

For type 1, i.e. $\Gamma_{1}=\{\alpha_{1},\ldots ,\alpha_{k}\}$,
$\Gamma_{2}=\{\alpha_{n-k},\ldots ,\alpha_{n-1}\}$
and $\tau(\alpha_{i})=\alpha_{n-k+i-1}$, the centralizer
$C(GL(n), r)$ consists of matrices $\mathrm{diag}(p_{1},\ldots ,p_{n})$
such that $p_{i-1}p_{i}^{-1}=p_{n-k+i-1}p_{n-k+i}^{-1}$ for all $i\leq k$.

For type 2, i.e. $\Gamma_{1}=\{\alpha_{1},\ldots ,\alpha_{k}\}$,
$\Gamma_{2}=\{\alpha_{n-k},\ldots ,\alpha_{n-1}\}$
and $\tau(\alpha_{i})=\alpha_{n-i}$, the corresponding $C(GL(n), r)$ consists of matrices $\mathrm{diag}(p_{1},\ldots ,p_{n})$
such that $p_{i}p_{i+1}^{-1}=p_{n-i}p_{n-i+1}^{-1}$ for all $i\leq k$.
We note that $k\leq [\frac{n-1}{2}]$, since otherwise $\tau$ has fixed points.

Let us assume that $X\in \overline{Z}(GL(n),r)$, for a triple
$(\Gamma_{1},\Gamma_{2},\tau)$ of the first type. Then $X=RJD$, where $R\in GL(n,\mathbb{K})$ and $D=\mathrm{diag}(d_{1},\ldots ,d_{n})$ is such that $SD^{-1}S\overline{D}\in C(GL(n), r)$. Let $p_{i}=\overline{d_{i}}d_{n+1-i}^{-1}$. Then $p_{n+1-i}=\overline{p_{i}}^{-1}$. On the other hand,
since $\mathrm{diag}(p_{1},\ldots ,p_{n})\in C(L(n), r)$, we have $p_{i-1}p_{i}^{-1}=p_{n-k+i-1}p_{n-k+i}^{-1}$ for all $i\leq k$. This further implies
$p_{i}p_{n-k+i}^{-1}=
p_{k-i+1}p_{n+1-i}^{-1}$ for all $i\leq k$. Thus we get
$p_{i}\overline{p}_{k-i+1}=p_{k-i+1}\overline{p_{i}}$, which
is equivalent to $p_{i}/p_{k-i+1}\in \mathbb{K}$. Equivalently,
$\frac{d_{i}d_{n+1-i}}{d_{k-i+1}d_{n-k+i}}\in \mathbb{K}$, for $i\leq k$.

Let us prove that $X$ is equivalent to $J$. For this, it is enough to
determine $C\in C(GL(n), r)$ which satisfies $SD^{-1}S\overline{D}=SC^{-1}S\overline{C}$. Let $C=\mathrm{diag}(c_{1},\ldots ,c_{n})$. The preceding condition is equivalent to the system: $\overline{c_{i}}c_{n+1-i}^{-1}=\overline{d_{i}}d_{n+1-i}^{-1}$, for $i\leq n$. On the other hand, since $C\in C(GL(n), r)$, $c_{i-1}c_{i}^{-1}=c_{n-k+i-1}c_{n-k+i}^{-1}$ for $i\leq k$. It follows that
$c_{i}c_{n-k+1}=c_{1}c_{n-k+i}$ and $c_{k-i+1}c_{n-k+1}=c_{1}c_{n-i+1}$. Consequently,
$c_{i}c_{n-i+1}=c_{k-i+1}c_{n-k+i}$. Furthermore,
$\frac{\overline{c}_{k-i+1}c_{k-i+1}}{\overline{c_{i}}c_{i}}=
\frac{\overline{d}_{k-i+1}d_{n+1-i}}{d_{n-k+i}\overline{d_{i}}}=:\lambda_{i}$. We note that $\lambda_{i}\in \mathbb{K}$ since $\frac{d_{i}d_{n+1-i}}{d_{k-i+1}d_{n-k+i}}\in \mathbb{K}$, for $i\leq k$. Thus we have obtained that the norm $c_{k-i+1}/c_{i}$ should be $\lambda_{i}$.  Now, if $c_{1}$,\ldots ,$c_{[\frac{k}{2}]}$ are fixed, then one can determine $c_{[\frac{k}{2}]+1}$,\ldots , $c_{k}$ since one can solve equations of the type
$x\overline{x}=\lambda_{i}$. The remaining unknown $c_{n-i+1}$ are determined by the relation $c_{k-i+1}c_{n-k+1}=c_{1}c_{n-i+1}$. Thus we have proved the existence of $C\in C(GL(n), r)$ and in conclusion $X$ and $J$ are equivalent.

Now let us consider $X\in \overline{Z}(SL(n),r)$, where the triple
$(\Gamma_{1},\Gamma_{2},\tau)$ is of the second type. Again we have a decomposition $X=RJD$, where $R\in GL(n,\mathbb{K})$ and $D=\mathrm{diag}(d_{1},\ldots ,d_{n})$ is such that $SD^{-1}S\overline{D}\in C(GL(n), r)$. Let $p_{i}=\overline{d_{i}}d_{n+1-i}^{-1}$. Since $\mathrm{diag}(p_{1},\ldots ,p_{n})\in C(GL(n), r)$, we have $p_{i}p_{i+1}^{-1}=p_{n-i}p_{n-i+1}^{-1}$ for all $i\leq k$. Since $p_{n+1-i}=\overline{p_{i}}^{-1}$, we easily get $p_{i}/p_{i+1}\in \mathbb{K}$, or equivalently, $\frac{d_{i}d_{n-i}}{d_{i+1}d_{n-i+1}}\in \mathbb{K}$ for $i\leq k$.

Let us show that $X$ is equivalent to $J$. As in the preceding case, the problem is reduced to solving the following
system: $\overline{c_{i}}c_{n+1-i}^{-1}=\overline{d_{i}}d_{n+1-i}^{-1}$, for $i\leq n$. On the other hand, since $C\in C(GL(n), r)$, $c_{i}c_{i+1}^{-1}=c_{n-i}c_{n-i+1}^{-1}$ for all $i\leq k$. We immediately get that
the norm of $c_{i}/c_{n-i}$ is $\lambda_{i}:=\frac{\overline{d_{i}}d_{i+1}}{\overline{d_{n-i}}d_{n+1-i}}$ which belongs to $\mathbb{K}$ since  $\frac{d_{i}d_{n-i}}{d_{i+1}d_{n-i+1}}\in \mathbb{K}$ for $i\leq k$. If we fix $c_{i}$ and solve
equations $x\overline{x}=\lambda_{i}$, we can determine $c_{n-i}$. The remaining unknown, $c_{k+1}$,\ldots ,$c_{n-k}$ can be arbitrarily chosen satisfying the
condition $\overline{c_{i}}c_{n+1-i}^{-1}=\overline{d_{i}}d_{n+1-i}^{-1}$. Thus $C$ exists and therefore the twisted cohomology set consists of the class of $J$.

\end{proof}

\section{Other gauge groups and conjectures}

\subsection{Computation of $H_{BD}^1 (SL(n), r_{BD})$}

The group $SL(n)$ is a subgroup of $GL(n)$ consisting of matrices with determinant one.
Let $H$ be the subgroup of diagonal matrices in $SL(n)$.
Positive roots are given by the formula $e^{\alpha_i}=d_id_{i+1}^{-1}$, where $\mathrm{diag}(d_1,\ldots , d_n)\in H$.
We will first prove the cohomology triviality for the Drinfeld-Jimbo $r$-matrix.

\begin{lem}
The Belavin-Drinfeld cohomology $H_{BD}^1( SL(n), r_{DJ})$ is trivial.
\end{lem}
\begin{proof}
Let $X \in Z^1 (SL(n), r_{DJ})$. We have $X=QD$,
where $Q \in GL(n, \mathbb{K})$, $D \in H(\overline{\mathbb{K}})$.
Then $D^{-1}\sigma(D) \in H(\mathbb{K})$ for any $\sigma$ in the absolute Galois group of $\mathbb{K}$.
Thus $\det D = k \in \mathbb{K}$. Let $D'=\mathrm{diag}(1,1, \ldots , k)$. Then $X=(QD')I(D'^{-1}D)$ is the desired decomposition, which provides
an equivalence between $X$ and  $ I$.
\end{proof}
Given an $r$-matrix from the Belavin--Drinfeld list,
let $\tau: \Gamma_1\to\Gamma_2$ be the corresponding admissible triple for $sl(n)$.
Let $\alpha_{i_1}, \ldots , \alpha_{i_k}$ be a string for $\tau$, $\tau (\alpha_{i_p})=\alpha_{i_{p+1}}$.
If $\tau (\alpha_{i_p})$ is not defined, then anyway we define the corresponding string, which consists of one element $\{\alpha_{i_p}\}$ only.
Moreover, for
any Belavin-Drinfeld triple we will also consider a string $\{\alpha_n\}$ with weight $n$.
For any string $S=\{\alpha_{i_1}, \ldots , \alpha_{i_k}\}$ of $\tau$, we define the weight of $S$ by $w_S=\sum_p i_p$.
Let $t_1, \ldots , t_n$ be the ends of the strings with weights $w_1, \ldots , w_n$. We note that some indices in  $w_1, \ldots , w_n$ are missing unless $\Gamma_1$ is an empty set and $w_n=n$ is always present. Let $N=GCD(w_1, \ldots , w_n)$.

\begin{thm}
The number of elements of $H^1_{BD}(SL(n), r)$ is $N$. Each cohomology class contains a diagonal matrix $D=A_1A_2$,
where $A_2\in C(GL(n), r)$ and
$A_1\in\mathrm{diag}(n, \mathbb{K})$.
Two such diagonal matrices $D_1=A_1A_2$  and $D_2=B_1B_2$ are contained in the same class of $H_{BD}^1(SL(n), r)$
if and only if $\det(A_1)=\det(B_1)$
in ${\mathbb K}^*/(\mathbb{K}^*)^N $.
\end{thm}
\begin{proof} Let $X\in SL(n,\overline{\mathbb{K}})$ be a representative of a cohomology class of $H^1_{BD} (SL(n), r)$.
Then we can find $Q\in SL(n,\mathbb{K})$ and a diagonal matrix $D$ such that $X=QD$. Therefore, $\det (D)=1$ and $X\sim D$.
Using the fact that $H^1_{BD} (GL(n), r)$ is trivial we can find a presentation $D=A_1 A_2$ such that
$A_1 $ is diagonal and has $\mathbb{K}$-entries while $A_2\in C(GL(n), r )$.


Let two diagonal matrices $D_1=A_1A_2$ and $D_2=B_1B_2$ be equivalent. Then we have
$$
A_1B_1^{-1}C_1=A_2^{-1}B_2C_2, \ C_1\in \mathrm{diag}(n,\mathbb{K}), \ C_2\in C(SL(n), r).
$$
We see that $A_1B_1^{-1}C_1=A_2^{-1}B_2C_2=K\in C(r, GL(n))\cap GL(n,\mathbb{K})$. Then $A_1K^{-1}=B_1C_1^{-1}$, $D_1=(A_1K^{-1})(A_2K)$.
Since $\det(C_1)=\det(C_2)=1$, it follows that the class of $D_1$ uniquely defines $\det(A_1)$ in $\mathbb{K}^*$ modulo the subgroup
generated by determinants of elements of $C(r, GL(n))\cap GL(n,\mathbb{K})$.

Let $K=\mathrm{diag}(k_1,\ldots, k_n)\in C(r, GL(n))\cap GL(n,\mathbb{K})$. Then it is easy to check that
$\det(K)=s_{t_1}^{w_1}s_{t_2}^{w_2} \ldots  s_{t_n}^{w_n}$ (where $s_p=k_p/k_{p+1}, s_n=k_n$) is the $N$-th power of an element of $\mathbb{K}$.

Conversely, let $D=\mathrm{diag}(d_1,\ldots ,d_n)\in Z(SL(n), r)$ and $D=A_1A_2$ as above.
It is sufficient to show that if $\det(A_1)=u^N$ for some $u\in\mathbb{K}^*$, then $D\sim I$.
There are integers $m_i$ so that $\sum m_iw_i=N$.
Set again $s_p=d_p/d_{p+1}, s_n=d_n$ and choose a string. If $t_p$ is the end of the string, set $s_i=s_p=u^{m_p}$
along the string. Solving the corresponding system for $\{d_i\}$, we find $d_1,d_2,\ldots,d_n\in\mathbb{K}$ (each $d_i$ will be a power of $u$),
such that the corresponding diagonal matrix $C=\mathrm{diag}(d_1,\ldots,d_n)$ has determinant $u^N$ and by construction $C\in C(r, GL(n))\cap GL(n,\mathbb{K})$.
Then $D=(A_1C^{-1})(CA_2)$ and $D\sim I$.

\end{proof}

\subsection{Computation of $\overline{H}_{BD}^1 (SL(n), r_{CG})$}

In this section we will compute Belavin-Drinfeld twisted cohomology for the Cremmer-Gervais $r$-matrix when the gauge group is $SL(n)$. The definition of this cohomology is exactly the same as in the $GL(n)$ case.
\begin{lem}
Any element of $\overline{Z}(SL(n), r_{CG})$ is equivalent to an element of the form $\alpha h_m J$, where $\alpha \in \overline{\mathbb{K}}$, $h_m=\mathrm{diag}(\hbar^m, 1, 1,\ldots, 1)$.
\end{lem}
\begin{proof}
By Proposition \ref{CG} an arbitrary cocycle can be written as $RJC$, where $R \in GL(n, \mathbb{K})$, $C \in C(GL(n), r_{CG})$. We can write $C=xC_1$, where $x \in \overline{\mathbb{K}}$, $C \in C(SL(n), r_{CG})$. Also we have $R=yh_mR_1$, where $y \in \mathbb{K}$, $R_1 \in C(SL(n), r_{CG})$. Therefore $RJC=R_1\alpha h_m J C_1 \sim \alpha h_m J$.
\end{proof}
\begin{lem}
If $\alpha_1 h_{m_1}J$ is equivalent to $\alpha_2h_{m_2}J$ then $m_2 \equiv m_1\ (\!\!\!\!\mod n/2)$ if $n$ is even and $m_2 \equiv m_1\ (\!\!\!\!\mod n)$ if $n$ is odd.
\end{lem}
\begin{proof}
The condition $\alpha_1 h_{m_1}J \sim \alpha_2h_{m_2}J$ is equivalent to $ \alpha_2h_{m_2}J = R\alpha_1 h_{m_1}J C $, where $R \in SL(n, \mathbb{K})$, $C \in C(SL(n), r_{CG})$. This in turn is equivalent to $h_{m_1}^{-1}Rh_{m_2}=JC_1J^{-1}$, where $C_1=\alpha_1\alpha_2^{-1}C \in C(GL(n), r_{CG})$. Since $h_m, R, J$ are defined over $\mathbb{K}[j]$, we have $C_1$ is defined over $K[j]$. Let $C_1=\mathrm{diag}(c_1,\ldots, c_n)$ (recall that all elements of $C(SL(n), r_{CG})$ are diagonal). Applying conjugation we get $JC_1J^{-1}=h_{m_1}^{-1}Rh_{m_2}=\overline{h_{m_1}^{-1}Rh_{m_2}}=JS\overline{C_1}SJ^{-1}$. Thus $S\overline{C_1}S=C_1$ i.e. $c_i=\overline{c_{n+1-i}}$. From the structure of centralizer we have $c_i/c_{i+1}=c_{n+1-(i+1)}/c_{n+1-i}$ so $c_{i}/c_{i+1}=\overline{c_{i+1}/c_i}$. It follows that the norms of all diagonal elements are equal to $\gamma \in \mathbb{K}$. If $n$ is odd then considering the central element we get that the norms of all diagonal elements are in fact equal to $\gamma^2$, for some $\gamma \in \mathbb{K}$. Finally we have $\hbar^{m_2-m_1}=\det(h_{m_1}^{-1}Rh_{m_2})=\det(JC_1J^{-1})=\gamma^k$, where $k=n/2$ for even $n$ and $k=n$ for odd $n$. The result follows.
\end{proof}
\begin{thm}
$\overline{H}_{BD}^1(SL(n), r_{CG})$ consists of $k$ elements where $k=n/2$ for even $n$ and $k=n$ for odd $n$.
\end{thm}
\begin{proof}
Note that if $X \in SL(n)$ commutes with all elements of the centralizer then if $A \sim B$ then $AX \sim BX$. Indeed from $A=RBC$ we get $AX=RBCX=RBXC$. Note that the matrices $h_m$ commute with the centralizer. Therefore, to prove the theorem we need to show that $\alpha h_kJ \sim \beta J$, for some scalars $\alpha, \beta$ (the scalars are defined uniquely in such a way that the cocycles are elements of $SL(n)$). We will now consider the cases of odd and even $n$ separately. 

Let $n$ be even. We need to find $R \in SL(n, \mathbb{K})$ and $C \in C(SL(n), r_{CG})$ such that $\alpha h_k J=\beta R J C$. Let's denote $C_1 = \beta \alpha^{-1}C \in C(GL(n), r_{CG})$. Then the equation becomes $h_k J = R J C_1$. Take $C_1=\mathrm{diag}(j, -j, j,\ldots, -j)$. Then $R=h_kJC_1^{-1}J^{-1}$. $\det R = 1$, $\overline{R} = h_kJS(-C_1)SJ^{-1}=h_kJC_1J^{-1}=R$. Therefore $R \in SL(n, \mathbb{K})$ and we are done.

Now assume $n$ is odd. Again we need to find $R \in SL(n, \mathbb{K})$ and $C \in C(SL(n), r_{CG})$ such that $\alpha h_k J=\beta R J C$. Let $C_1 = \beta \alpha^{-1}C \in C(GL(n), r_{CG})$. Then we get $h_kJ=RJC_1$. Take $C_1=\hbar$. Then $R=h_kJC_1^{-1}J^{-1}$, $\det R =1$. Finally $\overline{R}=h_kJSC_1^{-1}SJ^{-1}=R$.
\end{proof}

\subsection{Belavin--Drinfeld cohomology conjecture}

\begin{conj}
Let $\mathfrak{g}$ be a simple Lie algebra and $r_{DJ}$ the Drinfeld--Jimbo $r$-matrix. For any connected algebraic group $G$ which has $\mathfrak{g}$ as its Lie algebra, $H^{1}_{BD}(G, r_{DJ})$ is trivial.
\end{conj}

\subsection{Quantization conjecture}

Let $L$ be a finite dimensional Lie algebra over $\mathbb{C}$
and $\delta$ a Lie bialgebra structure on $L(\mathbb{K})$ such that
$\delta=0\ (\!\!\!\!\mod\hbar)$.

Let $(U_{\hbar}(L),\Delta_{\hbar})$ be
the corresponding quantum group, in other words the dequantization functor
$\widehat{Q}$ sends  $(U_{\hbar}(L),\Delta_{\hbar})$ to $(L(\mathbb{K}), \delta)$.
Let $G$ be a connected algebraic group with Lie algebra $L$. We assume that
$G $ acts on $L$ by the adjoint action.
Consider $G(\overline{\mathbb{K}})$. Let us define the centralizer $C(\overline{\mathbb{K}}, \delta)$.

\begin{defn}
The centralizer $C(\overline{\mathbb{K}}, \delta)$ consists of all
$X\in G(\overline{\mathbb{K}})$ such that for any $l\in L$,
\[(\mathrm{Ad}_{X}\otimes \mathrm{Ad}_{X})\delta(\mathrm{Ad}_{X}^{-1}(l))=\delta(l).\]
\end{defn}

\begin{defn}
We say that $X\in  G(\overline{\mathbb{K}})$ is a \emph{Belavin--Drinfeld cocycle} associated to $\delta$ if $\sigma(X)=XC$,
where $C\in C(\overline{\mathbb{K}}, \delta)$.

Two cocycles, associated to $\delta$, $X_{1}$ and $X_{2}$
are \emph{equivalent} if $X_{1}=QX_{2}C$, where $Q\in G(\mathbb{K})$ and $C\in C(\overline{\mathbb{K}}, \delta)$.

The set of equivalence classes will be denoted by $H^{1}_{BD}(G, \delta)$.
\end{defn}
Now let us define quantum Belavin--Drinfeld cohomology. The
quantum group $(U_{\hbar}(L),\Delta_{\hbar})$ is defined over $\mathbb{O}=\mathbb{C}[[\hbar]]$.
We extend the Hopf structures of $U_{\hbar}(L)$ to $U_{\hbar}(L,\mathbb{K})=U_{\hbar}(L)\otimes_{\mathbb{O}}\mathbb{K}$
and $U_{\hbar}(L,\overline{\mathbb{K}})=U_{\hbar}(L)\otimes_{\mathbb{K}}\overline{\mathbb{K}}$.
By abuse of notation, $\Delta_{\hbar}$ denotes all three comultiplications.

\begin{defn}
Let $P$ be an invertible element of $U_{\hbar}(L,\overline{\mathbb{K}})$. We say that it belongs to $C(U_{\hbar}(L),\Delta_{\hbar})$ if
\[(P\otimes P)\Delta_{\hbar}(P^{-1}aP)(P^{-1}\otimes P^{-1})=\Delta_{\hbar}(a)\]
for all $a\in U_{\hbar}(L)$.

\end{defn}
Denote \[F_{P}:=(P\otimes P)\Delta_{\hbar}(P^{-1})\in U_{\hbar}(L,\overline{\mathbb{K}})^{\otimes 2}.\]

\begin{defn}
$P$ is called a \emph{quantum Belavin--Drinfeld cocycle} if
for any $\sigma \in \Gal(\overline{\mathbb{K}}/\mathbb{K})$,
there exists $C\in C(U_{\hbar}(L),\Delta_{\hbar})$ such that
$\sigma(P)=PC$.

Two quantum cocycles $P_{1}$ and $P_{2}$ are \emph{equivalent} if $P_{2}=QP_{1}C$
where $Q$ is an invertible element of $U_{\hbar}(L,\mathbb{K})$
and $C\in C(U_{\hbar}(L),\Delta_{\hbar})$.
\end{defn}
\begin{rem}
On $U_{\hbar}(L)$ consider the comultiplications
 $\Delta_{\hbar,P_{1}}(a)=F_{P_{1}}\Delta_{\hbar}(a)F_{P_{1}}^{-1}$ and
$\Delta_{\hbar,P_{2}}(a)=F_{P_{2}}\Delta_{\hbar}(a)F_{P_{2}}^{-1}$.
Clearly, $\Delta_{\hbar,P_{2}}(a)=(Q\otimes Q)\Delta_{\hbar,P_{1}}(Q^{-1}aQ)\cdot (Q^{-1}\otimes Q^{-1})$.
Since $Q\in U_{\hbar}(L(\mathbb{K}))$, it is natural to call $\Delta_{\hbar,P_{1}}$ and $\Delta_{\hbar,P_{2}}$
$\mathbb{K}$--equivalent comultiplications on  $U_{\hbar}(L(\mathbb{K}))$.
\end{rem}

The set of equivalence classes of quantum Belavin--Drinfeld cocycles
associated to $\Delta_{\hbar}$ will be denoted by $H^{1}_{q-BD}(\Delta_{\hbar})$.

\begin{conj}
There is a natural correspondence between $H^{1}_{BD}(G, \delta)$ and
$H^{1}_{q-BD}(\Delta_{\hbar})$.
\end{conj}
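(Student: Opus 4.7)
The plan is to construct the correspondence via the Etingof--Kazhdan quantization functor $\widehat{Q}$, which provides an equivalence $LBA_{0}(k[[\hbar]]) \simeq HA_{0}(k[[\hbar]])$ for any field $k$ of characteristic zero. Applying $\widehat{Q}$ simultaneously with $k=\mathbb{K}$ and $k=\overline{\mathbb{K}}$ yields compatible functors that intertwine the action of $Gal(\overline{\mathbb{K}}/\mathbb{K})$, because the associator used to construct $\widehat{Q}$ can be chosen to be defined over $\mathbb{Q}$ and is therefore Galois-invariant.

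The first step is to reinterpret both cohomology sets uniformly. By the same argument as in Section 4, I would show that $H^{1}_{BD}(G,\delta)$ is in bijection with $G(\mathbb{K})$-gauge equivalence classes of Lie bialgebra structures on $L(\mathbb{K})$ that become isomorphic to $\delta$ after extension of scalars to $\overline{\mathbb{K}}$. The parallel interpretation on the quantum side, already foreshadowed in the remark preceding the conjecture, is that $H^{1}_{q-BD}(\Delta_{\hbar})$ classifies the equivalence classes of comultiplications on $U_{\hbar}(L(\mathbb{K}))$ that become twist-equivalent to $\Delta_{\hbar}$ after extension to $\overline{\mathbb{K}}$; the cocycle condition $\sigma(P)=PC$ with $C \in C(U_{\hbar}(L),\Delta_{\hbar})$ is precisely the descent condition ensuring $\Delta_{\hbar,P}$ is defined over $\mathbb{K}$.

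The correspondence is then induced by $\widehat{Q}$. Given a classical cocycle $X$, the twisted cobracket $\delta_{X}(a):=(\mathrm{Ad}_X \otimes \mathrm{Ad}_X)\delta(\mathrm{Ad}_X^{-1}(a))$ is defined over $\mathbb{K}$, and quantizing via $\widehat{Q}$ produces a new comultiplication on $U_{\hbar}(L(\mathbb{K}))$. Over $\overline{\mathbb{K}}$, since $\mathrm{Ad}_X$ intertwines $\delta$ and $\delta_X$, the functor produces an invertible $P \in U_{\hbar}(L(\overline{\mathbb{K}}))$ with $P \equiv X \pmod{\hbar}$ implementing this equivalence via the twist $F_P$, and the cocycle property of $X$ transfers to that of $P$ by Galois equivariance. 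The reverse map sends a quantum cocycle $P$ to $X := P \bmod \hbar$, the cocycle condition descending because the classical limit carries $C(U_{\hbar}(L),\Delta_{\hbar})$ into $C(\overline{\mathbb{K}},\delta)$.

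The main obstacle is to show that these two maps are mutually inverse on cohomology classes. The quantization $P$ of a given $X$ is canonical only up to right multiplication by elements of $C(U_{\hbar}(L),\Delta_{\hbar})$, so the induced map on cohomology requires proving that the quantum centralizer reduces \emph{surjectively} onto the classical one modulo $\hbar$. This amounts to lifting each element of $C(\overline{\mathbb{K}},\delta)$ to an invertible element of $U_{\hbar}(L(\overline{\mathbb{K}}))$ whose associated conjugation preserves $\Delta_{\hbar}$; an order-by-order deformation argument, together with a vanishing-of-obstructions argument for the Hochschild-type cohomology controlling such lifts, should suffice. This step, together with a compatibility check for the equivalence relations, is also where the \emph{naturality} of the correspondence must be verified, reflecting the universal character of the Etingof--Kazhdan construction.
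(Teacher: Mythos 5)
The statement you are trying to prove is stated in the paper as a \emph{conjecture} (the ``quantization conjecture'' of Section 8.2); the authors give no proof, so there is no argument of theirs to compare yours against. Your proposal is a reasonable strategy outline --- pass back and forth through the Etingof--Kazhdan functor $\widehat{Q}$ and interpret both cohomologies as descent data --- but as written it is a plan, not a proof, and the places where it is vague are exactly the places where the conjecture is hard.

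Concretely: (1) The step you yourself flag as ``the main obstacle'' --- that reduction modulo $\hbar$ maps $C(U_{\hbar}(L),\Delta_{\hbar})$ \emph{onto} $C(\overline{\mathbb{K}},\delta)$, and that the induced maps on equivalence classes are mutually inverse --- is asserted via ``an order-by-order deformation argument'' and ``vanishing of obstructions'' for an unnamed cohomology. No such vanishing is established, and the conjecture is stated for an \emph{arbitrary} finite-dimensional Lie algebra $L$ over $\mathbb{C}$, not just a simple one, so there is no a priori reason the relevant obstruction groups vanish. (2) The claimed Galois equivariance of $\widehat{Q}$ is subtler than ``the associator is defined over $\mathbb{Q}$'': here the base field $\mathbb{K}=\mathbb{C}((\hbar))$ itself contains the deformation parameter, $Gal(\overline{\mathbb{K}}/\mathbb{K})$ acts through roots of $\hbar$, and the EK construction uses $\hbar$ as the quantization parameter; you must actually check that quantization commutes with this action, which permutes the $\hbar^{1/p^{n}}$. (3) Your reverse map $P\mapsto P\bmod\hbar$ is not obviously well defined: $P$ is an arbitrary invertible element of $U_{\hbar}(L,\overline{\mathbb{K}})$, and its reduction is an invertible element of an enveloping algebra, not automatically an element of the adjoint group $G(\overline{\mathbb{K}})=\mathrm{Ad}(L(\overline{\mathbb{K}}))$; some argument (group-likeness up to centralizer, or a logarithm argument) is needed before you can even speak of $X:=P\bmod\hbar$. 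Until these three points are settled, what you have is a credible reduction of the conjecture to identifiable sub-problems, not a proof of it.
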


\textbf{Acknowledgment.} The authors are grateful to V. Kac, P. Etingof
and V. Hinich for valuable suggestions.

\end{document}